\def\beq{\begin{equation}}
\def\eeq{\end{equation}}
\def\ba{\begin{array}}
\def\ea{\end{array}}
\def \RpN{\mathbb{R}_+^{n+1}}
\def\cal{\mathcal}
\numberwithin{equation}{section}
\newenvironment{abs}{\textbf{Abstract}\mbox{  }}{ }
\newenvironment{key words}{\textbf{Keywords}\mbox{  }}{ }
\newtheorem{theorem}{Theorem}[section]
\newtheorem{definition}[theorem]{\textbf{Definition}}
\newtheorem{corollary}[theorem]{\textbf{Corollary}}
\newtheorem{proposition}[theorem]{\textbf{Proposition}}
\newtheorem{lemma}[theorem]{Lemma}
\renewenvironment{proof}{\noindent{\textbf{Proof.}}}{\hfill$\Box$}
\theoremstyle{remark}
\newtheorem{remark}[theorem]{\textbf{Remark}}
\theoremstyle{plain}
\begin{document}

\title{\textbf{Hardy-Sobolev inequalities with distance to the boundary weight functions}}
\author  {Lei Wang  and Meijun Zhu}
\address{ Lei Wang, Beijing International Center for Mathematical Research, Peking University, Beijing,  100871, P.R.China }
\email{leiwang@bicmr.pku.edu.cn}

\address{ Meijun Zhu, Department of Mathematics,
The University of Oklahoma, Norman, OK 73019, USA; Institute of Geometry and Physics, the University of Science and Technology of China, Hefei, Anhui 230026, China}

\email{mzhu@math.ou.edu}

\subjclass[2010]{35A23 (Primary), 35B09, 35J70  (Secondary).}
\keywords{Hardy-Sobolev inequality, Distance function to the boundary,   Sharp constant}

\maketitle

\noindent
\begin{abs}
This is the first part of our research on certain sharp Hardy-Sobolev inequalities and the related elliptic equations. In this part we shall establish some sharp weighted Hardy-Sobolev inequalities whose weights are distance functions to the boundary.
\end{abs}\\

\section{\textbf{Introduction}\label{Section 1}}
%



\subsection{A brief review on the classical Hardy and Sobolev inequalities}

One of the simple, yet indispensable tools   in the study of modern nonlinear partial differential equations is the following Hardy inequality (initially discovered by Hardy, see, for example,  \cite{HLP34}):
\begin{align}\label{Hardy-n}
    \int_{\mathbb{R}^{n+1}}\frac{|u(x)|^p}{|x|^p}dx\leq (\frac{p}{n+1-p})^p \int_{\mathbb{R}^{n+1}}|\nabla u|^p dx,\ \ \forall u\in \cal D_0^{1,p}(\mathbb{R}^{n+1}),
\end{align}
where $1<p<n+1$ and $\cal D_0^{1,p}({\mathbb{R}^{n+1}})$ is the completion of $C^{\infty}_0(\mathbb{R}^{n+1})$ under the norm $(\int_{\mathbb{R}^{n+1}}|\nabla u|^pdx)^{\frac1p}$.

The importance of this inequality is at least twofold.  
First of all, the inequality has its own interest. The constant $(\frac{p}{n+1-p})^p$ in \eqref{Hardy-n} is sharp. However, since the inequality has the scaling invariant property, an extremal sequence for the sharp inequality may not strongly converge in a suitable function space to any function. In fact, it is well known that the equality in  \eqref{Hardy-n} does not hold for any nontrivial function in $\cal D_0^{1,p}(\mathbb{R}^{n+1})$. Inequality \eqref{Hardy-n} certainly holds on any bounded domain containing the origin  as an interior point  for functions vanishing outside the domain, with the same sharp constant as in $\mathbb{R}^{n+1}$.

Secondly,  the inequality plays an essential role in solving certain classical nonlinear PDEs. For example, with the help of Hardy inequality and H\"older inequality, Hardy and Littlewood first  obtained the Hardy-Littlewood inequality (\cite{HL30}), and later it was discovered that its sharp form (so-called Bliss Lemma, see \cite{Bliss30}) yields the following sharp Sobolev inequality (due to  Aubin \cite{Aubin76a} and 
Talenti \cite{Talenti76}):
\begin{equation}\label{sob-1-1}
(\int_{\mathbb{R}^{n+1}}|u|^{\frac{(n+1)p}{n+1-p}}dx)^{\frac{n+1-p}{n+1}}\le S_{n+1,p} \int_{\mathbb{R}^{n+1}}|\nabla u|^p dx,\ \ \forall u\in \cal D_0^{1,p}({\mathbb{R}^{n+1}}),
\end{equation}
where $1<p<n+1$ (well, the sharp Sobolev inequality for $p=2$ in three dimension seems to be  derived first  by Rosen \cite{Rosen71}). Inequality \eqref{sob-1-1} in turn leads to the resolution of the famous Yamabe problem (see, for example,  Lee and Parker \cite{LP87}). Note that the above Sobolev inequality with a non-sharp constant can also be derived from  the  Gagliardo-Nirenberg inequality in Gagliardo \cite{Ga59} or Nirenberg \cite{Ni59}. For $p=2$, inequality \eqref{sob-1-1} holds on any domain in $\mathbb{R}^{n+1}$  for functions vanishing outside the domain, with the same sharp constant as in $\mathbb{R}^{n+1}$, but the sharp constant is never achieved unless the domain is the whole space $\mathbb{R}^{n+1}$ (due to the famous  Liouville Theorem of Gidas, Ni and Nirenberg \cite{GNN79}, or  the stronger Liouville Theorem (without decay assumption) of Caffarelli, Gidas and Spruck \cite{CGS89}).

Using the interpolation between  Sobolev inequality ($s=0$) and  Hardy inequality ($s=p$), one can easily obtain the following 
Hardy-Sobolev  inequality:
\begin{align}\label{HS-1}
     \big(\int_{\mathbb{R}^{n+1}}\frac{|u(x)|^{\frac{(n+1-s)p}{n+1-p}}}{|x|^s}dx\big)^{\frac{n+1-p}{n+1-s}}\leq C \int_{\mathbb{R}^{n+1}}|\nabla u|^p dx,\ \ \forall u\in \cal D_0^{1,p}({\mathbb{R}^{n+1}}),
\end{align}
where $1<p<n+1$ and $0\leq s\leq p$.  The sharp constants and extremal functions of \eqref{HS-1} for $0<s<p$ 
can also be obtained from Bliss Lemma. Some interesting studies of related inequalities on a bounded domain 
can be found, for example,  in Ghoussoub and Yuan\cite{
GY00}, Ghoussoub and Roberts\cite{GR06} and references therein.

\subsection{Hardy-Sobolev inequality with distance to the boundary weight functions}
For any bounded domain $\Omega\subset\mathbb{R}^{n+1}$ (for $n\geq 1$) with Lipschitz boundary, let $\delta=
\delta_\Omega(x)=\text{dist}(x,\partial \Omega)$ be the distance (to the boundary)  function for $x\in\Omega$.  Note that $\frac{\int_\Omega |\nabla u|^p dx}{\int_\Omega |\frac u{\delta}|^pdx}$ has the similar scaling invariant property to   $\frac{\int_\Omega |\nabla u|^p dx}{\int_\Omega\frac{|u(x)|^p}{|x-x_o|^p }dx}$ for any $x_o \in \Omega$. In fact, it is not surprise that we have another type of Hardy inequality, which asserts (see, for example, Opic and  Kufner \cite{OK90}, or see our direct proof in Section 2.1.2): for $p>1$,  there is a positive constant $C=C(n,p,\Omega)$, 
such that 
\begin{equation}\label{hardy-1-1}
\int_\Omega |\frac u{\delta}|^pdx \le C \int_\Omega |\nabla u|^p dx,\ \ \forall u\in W_0^{1,p}(\Omega).
\end{equation}

Similarly, using the interpolation between  Sobolev inequality 
and  Hardy inequality \eqref{hardy-1-1}, we can easily obtain a Hardy-Sobolev type inequality   similar  to  \eqref{HS-1} for $1<p<n+1$. In fact, we have a more general inequality as follows:

\begin{theorem}\label{thm1-1} Let $\Omega$ be a bounded domain with Lipschitz boundary in $\mathbb{R}^{n+1}$.  Assume $p\in (1, n+1]$ and $\beta$ satisfies
\begin{equation}\label{beta-p}
    \begin{cases}
       0\leq \beta \leq \frac{p(n+1)}{n+1-p},& \text{ if } p<n+1,\\
       \beta\geq 0, & \text{ if }p=n+1.
    \end{cases}
\end{equation}
Then there is a positive constant $C=C(n, p, \beta,\Omega)$, such that for all $u \in W^{1,p}_0(\Omega)$,
\begin{align}\label{p-main-1}
  \big( \int_{\Omega}\delta^{-p+\frac{n+1-p}{n+1}\beta}|u|^{p+\frac{p\beta}{n+1}}dx\big)^{\frac
   {n+1}{n+\beta+1}}\leq C\int_{\Omega} |\nabla u|^p dx.
\end{align}
\end{theorem}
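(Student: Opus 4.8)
The plan is to prove \eqref{p-main-1} by interpolating, through H\"older's inequality, between the two endpoint cases $\beta=0$ and $\beta=\frac{p(n+1)}{n+1-p}$. At $\beta=0$ the left-hand side of \eqref{p-main-1} collapses to $\int_\Omega\delta^{-p}|u|^p\,dx$, so the assertion is exactly the Hardy inequality \eqref{hardy-1-1}. At the other extreme $\beta=\frac{p(n+1)}{n+1-p}$ (for $p<n+1$) the weight exponent $-p+\frac{n+1-p}{n+1}\beta$ vanishes, the power on $|u|$ becomes the Sobolev exponent $p^{*}:=\frac{(n+1)p}{n+1-p}$, and the outer exponent $\frac{n+1}{n+\beta+1}$ becomes $\frac{n+1-p}{n+1}$, so the assertion reduces to the Sobolev inequality \eqref{sob-1-1} (applied to the zero-extension of $u$ to $\mathbb{R}^{n+1}$). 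Thus \eqref{p-main-1} is the natural one-parameter family joining these two endpoints.

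First I would treat the subcritical range $1<p<n+1$. Set $b=\frac{(n+1-p)\beta}{(n+1)p}$; the hypothesis \eqref{beta-p} is then \emph{equivalent} to $b\in[0,1]$, which is precisely the range in which the conjugate exponents $\frac1{1-b}$ and $\frac1b$ used below are admissible, so the constraint on $\beta$ is exactly what the interpolation demands. A direct check of the powers of $\delta$ and of $|u|$ gives the factorization $\delta^{-p+\frac{n+1-p}{n+1}\beta}|u|^{p+\frac{p\beta}{n+1}}=\big(\delta^{-p}|u|^{p}\big)^{1-b}\big(|u|^{p^{*}}\big)^{b}$, the key point being that the power of $|u|$ carried by the Sobolev factor is exactly $p^{*}b=\beta$. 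Applying H\"older's inequality with exponents $\frac1{1-b}$ and $\frac1b$ yields
\beq
\int_{\Omega}\delta^{-p+\frac{n+1-p}{n+1}\beta}|u|^{p+\frac{p\beta}{n+1}}\,dx\le\Big(\int_{\Omega}\delta^{-p}|u|^{p}\,dx\Big)^{1-b}\Big(\int_{\Omega}|u|^{p^{*}}\,dx\Big)^{b}.
\eeq
Bounding the first factor by \eqref{hardy-1-1} and the second by \eqref{sob-1-1}, and collecting the powers of $\int_\Omega|\nabla u|^p\,dx$, produces the exponent $(1-b)+b\,\frac{p^{*}}{p}=1+\frac{\beta}{n+1}=\frac{n+\beta+1}{n+1}$; raising both sides to the power $\frac{n+1}{n+\beta+1}$ gives \eqref{p-main-1}, with $C=\big(C_{1}^{\,1-b}S_{n+1,p}^{\,bp^{*}/p}\big)^{\frac{n+1}{n+\beta+1}}$ where $C_{1}$ is the Hardy constant from \eqref{hardy-1-1}.

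The main obstacle is the borderline exponent $p=n+1$, where $p^{*}=\infty$ and the above interpolation degenerates, since $b=0$ for every admissible $\beta$. Here the weight is frozen at $\delta^{-(n+1)}$ while the power of $|u|$ is $n+1+\beta$, and no H\"older splitting can reproduce the exponent $p=n+1$ on the right: distributing part of the weight $\delta^{-(n+1)}$ into a second factor only raises its effective Hardy exponent (yielding the wrong power of $\int|\nabla u|$), while the singular factor $\delta^{-(n+1)}$ cannot simply be dropped. The substitution $w=|u|^{1+\beta/(n+1)}$ together with \eqref{hardy-1-1} likewise fails, because it reduces \eqref{p-main-1} to $\int_\Omega|u|^{\beta}|\nabla u|^{n+1}\,dx\le C\big(\int_\Omega|\nabla u|^{n+1}\,dx\big)^{1+\beta/(n+1)}$, an inequality that is \emph{false} in the borderline setting (Moser-type logarithmically concentrating functions violate it). The way around this is to exploit that the singular weight lives near $\partial\Omega$: I would split $\Omega=\{\delta\ge\tau\}\cup\{\delta<\tau\}$. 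On the interior piece $\delta^{-(n+1)}\le\tau^{-(n+1)}$, so its contribution is controlled by $\int_\Omega|u|^{n+1+\beta}\,dx$ and hence by $\big(\int_\Omega|\nabla u|^{n+1}\,dx\big)^{(n+1+\beta)/(n+1)}$ through the embedding $W_0^{1,n+1}(\Omega)\hookrightarrow L^{n+1+\beta}(\Omega)$ (valid since $n+1+\beta<\infty$), which is exactly where the concentration phenomenon is harmless. On the collar $\{\delta<\tau\}$ I would flatten the Lipschitz boundary and reduce, in the normal variable, to a one-dimensional weighted Hardy--Sobolev estimate of the type $\big(\int_0^\tau t^{-(n+1)}|f|^{n+1+\beta}\,dt\big)^{(n+1)/(n+1+\beta)}\le C\int_0^\tau|f'|^{n+1}\,dt$ for $f(0)=0$; the delicate point there, and what I expect to require the most care, is combining the tangential integration with the outer exponent $\frac{n+1}{n+\beta+1}$ so as not to lose the inequality. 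In all cases I would first prove \eqref{p-main-1} for $u\in C_0^\infty(\Omega)$ and then pass to all of $W_0^{1,p}(\Omega)$ by density.
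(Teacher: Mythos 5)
For $1<p<n+1$ your interpolation argument is correct and complete: the factorization $\delta^{-p+\frac{n+1-p}{n+1}\beta}|u|^{p+\frac{p\beta}{n+1}}=\big(\delta^{-p}|u|^{p}\big)^{1-b}\big(|u|^{p^{*}}\big)^{b}$ with $b=\frac{(n+1-p)\beta}{(n+1)p}$ checks out, and combining \eqref{hardy-1-1} with \eqref{sob-1-1} gives \eqref{p-main-1} with the stated constant. This is exactly the interpolation route the paper itself acknowledges for the subcritical range; the paper's actual proof instead goes through the half-space inequality \eqref{p-main-2} plus a covering/partition-of-unity and boundary-flattening argument (Lemma \ref{var-level}), precisely because that route also covers $p=n+1$ and, for $p=2$, yields the $\varepsilon$-level almost-sharp constant needed later.

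The case $p=n+1$ with $\beta>0$ is where your proposal has a genuine gap, and it is exactly the point you flag but do not resolve. Your collar estimate rests on a one-dimensional bound in the normal variable only. Carrying it out: from $|u(y,t)|\le t^{\frac{n}{n+1}}g(y)$ with $g(y)=\big(\int_0^\tau|\partial_t u(y,s)|^{n+1}ds\big)^{\frac1{n+1}}$, the inner integral gives $\int_0^\tau t^{-(n+1)}|u(y,t)|^{n+1+\beta}dt\le C_\tau\, g(y)^{n+1+\beta}$, so after tangential integration you must control $\int_{\mathbb{R}^n}g(y)^{n+1+\beta}dy$ by $\big(\int_{\mathbb{R}^n}g(y)^{n+1}dy\big)^{\frac{n+1+\beta}{n+1}}$. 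That is a reverse H\"older inequality and is false in general: $\|g\|_{L^{n+1+\beta}(\mathbb{R}^n)}$ is not controlled by $\|g\|_{L^{n+1}(\mathbb{R}^n)}$, and the outer exponent $\frac{n+1}{n+\beta+1}<1$ prevents any Minkowski-type rescue. The inequality on the collar genuinely requires the tangential derivatives, i.e.\ a full $(n+1)$-dimensional weighted estimate. This is what the paper supplies: the weighted $L^1$ inequalities of Lemmas \ref{lem2-1}--\ref{lem2-2} combine into the Gagliardo--Nirenberg type Proposition \ref{prop2-1} and Corollary \ref{cor2-6-p}(2) (with $k=-n$, $l=-n-1$, $p=n+1$, $s=n+1+\beta$), which is exactly the half-space inequality \eqref{GGN-o-1-n+1} your collar step needs. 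To close your argument you would have to either prove that half-space inequality (which cannot be done by slicing in $t$ alone) or cite Theorem \ref{thm1-2} for $p=n+1$ and then run the flattening argument --- at which point you have essentially reproduced the paper's proof for the critical exponent.
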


    We will provide another approach to prove Theorem \ref{thm1-1}. We first prove the inequality on the upper half space $\mathbb{R}^{n+1}_+:=\{(y, t) \, :  \ y\in \mathbb{R}^n, \, t>0\} $ (see the next theorem), and then inequality \eqref{p-main-1} on a bounded domain can be derived by using the covering method via the  partition of unity. The main idea is similar to the approach for the proof of the $\varepsilon$-level sharp Sobolev type inequalities on compact manifolds, see, for example, the work by Aubin \cite{Aubin76b}, Hebey and Vaugon \cite{HV96},  Li and Zhu \cite{LZ97}, etc.
The advantage of this approach is twofold. First, we can obtain the above inequality for the case $p=n+1$, which can not be obtained via the interpolation method. Second, for $p=2$ and some specific $\beta$, we can obtain the sharp constants and extremal functions for the  inequalities on $\mathbb{R}^{n+1}_+$. In our later work in progress, such sharp constants and extremal functions play the key role in the study of sharp inequalities on a bounded domain for $p=2$.

Denote $\cal D_{0,0}^{1,p}(\mathbb{R}^{n+1}_+ )$ 
as the completion of $C^{\infty}_0(\mathbb{R}^{n+1}_+ )$ under the norm $(\int_{\mathbb{R}^{n+1}_+ }|\nabla u|^pdx)^{\frac1p}$. For $1<p\leq n+1$, it is easy to check that $$ \cal{D}^{1,p}_{0,0}(\mathbb{R}^{n+1}_+)=\{u\in W^{1,1}_{loc}(\mathbb{R}^{n+1}_+): u\in L^{p}(\mathbb{R}^{n+1}_+, t^{-p}dydt), \nabla u\in L^p(\mathbb{R}^{n+1}_+)\},$$
where $L^{p}(\mathbb{R}^{n+1}_+, t^{-p}dydt)=\{u: \int_{\mathbb{R}^{n+1}_+} t^{-p}|u|^p dydt<+\infty\}$. We have the following inequalities on the upper half space:

\begin{theorem}\label{thm1-2} Assume that $p\in (1, n+1]$ and $\beta$ satisfies \eqref{beta-p}. There is a positive sharp constant $C^*_{n+1, p, \beta}$, such that for all $u \in  \cal{D}^{1,p}_{0,0}(\mathbb{R}^{n+1}_+)$,
\begin{align}\label{p-main-2}
(\int_{\mathbb{R}^{n+1}_+} t^{-p+\frac{n+1-p}{n+1}\beta} |u|^{p+\frac{p\beta}{n+1}} dydt)^{\frac
   {n+1}{n+\beta+1}} \le C^*_{n+1, p, \beta} \int_{\mathbb{R}^{n+1}_+}|\nabla u|^p dydt.
\end{align}
Moreover, for $p=2$, the equality holds for some functions in $\cal{D}^{1,2}_{0,0}(\mathbb{R}^{n+1}_+)$  if $\beta\in (0,\frac{2(n+1)}{n-1})$ for $n\geq 2$, or $\beta>0$ for $n=1$.

In the following two cases, the extremal functions can be explicitly written out and the sharp constants can be calculated:

\noindent (1) For $\beta=1$, 
 \begin{align}\label{beta=1}
     (\int_{\mathbb{R}^{n+1}_+}t^{-\frac{n+3}{n+1}} |u|^{\frac{2n+4}{n+1}} dydt)^{\frac {n+1}{n+2}} \le  C^*_{n+1, 2, 1}\int_{\mathbb{R}^{n+1}_+}|\nabla u|^2dydt,\ \forall u\in \cal D_{0,0}^{1,2}(\mathbb{R}^{n+1}_+),
 \end{align}
where
$$
C^*_{n+1, 2, 1}=\frac{1}{2(n+1)}\big(\frac{\Gamma(n+4)}{\pi^{\frac{n}{2}}\Gamma(\frac{n}{2}+2)}\big)^{\frac{1}{n+2}},
$$
and equality in \eqref{beta=1} holds if and only if
\begin{align}\label{ex-beta=1}
u(y,t)=\frac{Ct}{(A+t)^2+|y-y_0|^2)^{\frac{n+1}2}},
\end{align}for some $A>0$, $C\in \mathbb{R}$ and $y_0\in \mathbb{R}^n.$

\noindent(2) For $\beta=2$,
\begin{equation}\label{beta=2}
 (\int_{\mathbb{R}^{n+1}_+}t^{-\frac{4}{n+1}} |u|^{\frac{2n+6}{n+1}} dydt)^{\frac {n+1}{n+3}} \le  C^*_{n+1, 2, 2}\int_{\mathbb{R}^{n+1}_+}|\nabla u|^2dydt,\ \forall u \in \cal D_{0,0}^{1,2}(\mathbb{R}^{n+1}_+)
 \end{equation}
where
\begin{align*}
C^*_{n+1, 2, 2}=\frac{1}{(n+1)(n+3)}\big(\frac{4\Gamma(n+3)}{\pi^{\frac{n+1}{2}}\Gamma(\frac{n+3}{2})}\big)^{\frac{2}{n+3}},
\end{align*}
and equality  in \eqref{beta=2} holds if and only if
\begin{align}\label{ex-beta=2}
u(y,t)=\frac{Ct}{(A^2+t^2+|y-y_0|^2)^{\frac{n+1}2}},
\end{align}
for some $A>0$,  $C\in \mathbb{R}$ and $y_0\in \mathbb{R}^n.$ 
\end{theorem}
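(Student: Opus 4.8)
The plan is to read $C^*_{n+1,p,\beta}$ as a variational constant and, for $p=2$, to analyze the associated Euler--Lagrange equation. Abbreviate $q=p+\frac{p\beta}{n+1}$ and $a=-p+\frac{n+1-p}{n+1}\beta$, and set
\[
\frac{1}{C^*_{n+1,p,\beta}}=\inf\Big\{\int_{\mathbb{R}^{n+1}_+}|\nabla u|^p\,dydt:\ u\in\mathcal{D}^{1,p}_{0,0}(\mathbb{R}^{n+1}_+),\ \int_{\mathbb{R}^{n+1}_+}t^{a}|u|^{q}\,dydt=1\Big\}.
\]
A short computation shows that the two integrals scale compatibly under the dilations $u\mapsto u(\lambda\,\cdot)$ and the horizontal translations $u\mapsto u(\cdot-(y_0,0))$, so the quotient is invariant under this group; this is simultaneously the source of sharpness and of the compactness difficulties below. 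To see that the infimum is positive (i.e. that \eqref{p-main-2} holds) for $1<p<n+1$, I would interpolate: writing $t^{a}|u|^{q}=(t^{-p}|u|^p)^{\theta}(|u|^{p^*})^{1-\theta}$ with $p^*=\frac{(n+1)p}{n+1-p}$, matching the powers of $t$ and of $u$ forces $\theta=1-\frac{(n+1-p)\beta}{p(n+1)}$, which lies in $[0,1]$ exactly when $\beta$ obeys \eqref{beta-p}. Hölder's inequality then bounds $\int t^a|u|^q$ by $(\int t^{-p}|u|^p)^{\theta}(\int|u|^{p^*})^{1-\theta}$, and combining the half-space Hardy inequality $\int_{\mathbb{R}^{n+1}_+}t^{-p}|u|^p\le C\int_{\mathbb{R}^{n+1}_+}|\nabla u|^p$ with the Sobolev inequality \eqref{sob-1-1} yields \eqref{p-main-2}. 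The endpoint $p=n+1$ falls outside the interpolation range (no Sobolev endpoint is available), which is exactly why a direct treatment on the half space is needed: here $a=-(n+1)$ and $q=(n+1)+\beta$, and I would derive $(\int t^{-(n+1)}|u|^{(n+1)+\beta})^{\frac{n+1}{n+\beta+1}}\le C\int|\nabla u|^{n+1}$ from the half-space Hardy inequality together with a one-dimensional slicing estimate in $t$ controlling the surplus power $|u|^{\beta}$.

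For $p=2$ I would obtain the extremals as minimizers of the quotient above. Since the quotient is invariant under the full dilation--translation group, compactness of minimizing sequences is the crux. The decisive threshold is $q<2^*_{n+1}:=\frac{2(n+1)}{n-1}$, which holds precisely when $\beta<\frac{2(n+1)}{n-1}$ (with no upper restriction when $n=1$, since then $2^*_{n+1}=\infty$); at the endpoint $\beta=\frac{2(n+1)}{n-1}$ one has $a=0$ and \eqref{p-main-2} collapses to the pure Sobolev inequality on $\mathbb{R}^{n+1}_+$, whose Dirichlet extremals fail to exist by the Pohozaev identity. For $q<2^*_{n+1}$ I would run the concentration--compactness principle: the weight $t^{a}$ with $a<0$ controls the boundary behaviour, while a test-function estimate showing that the minimization level is strictly below the Sobolev constant excludes the escape of mass into the interior (where $t^a$ is locally bounded and the problem reverts to the critical Sobolev one). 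This produces a minimizer $u\ge 0$, which after elliptic regularity and the method of moving planes in the $y$-variables is, up to a horizontal translation, radially symmetric and decreasing in $y$ and solves
\[
-\Delta u=\mu\, t^{a}u^{q-1}\quad\text{in }\mathbb{R}^{n+1}_+,\qquad u=0\ \text{on }\partial\mathbb{R}^{n+1}_+,\quad u>0,
\]
for some $\mu>0$.

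The explicit cases $\beta=1,2$ are then handled by a single substitution that simultaneously removes the weight and the boundary vanishing. Setting $u=t\,v$ and using $\Delta(tv)=t\Delta v+2\partial_t v$, the equation becomes $-\Delta v-\frac{2}{t}\partial_t v=\mu\, t^{\beta-2}v^{q-1}$. Reading $-\Delta v-\frac{2}{t}\partial_t v$ as the Laplacian on $\mathbb{R}^{n+3}=\mathbb{R}^n\times\mathbb{R}^3$ acting on functions that are radially symmetric in the last three Euclidean variables, with $t=|X'|$ the distance to the axis $\{X'=0\}=\mathbb{R}^n$, this is $-\Delta_{\mathbb{R}^{n+3}}v=\mu\,|X'|^{\beta-2}v^{q-1}$; an integration by parts shows that $u\mapsto v$ is a bijection between the two variational problems, up to the dimensional constant $|S^2|$. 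For $\beta=2$ the weight disappears and $q-1=\frac{n+5}{n+1}=\frac{(n+3)+2}{(n+3)-2}$ is the critical Sobolev exponent in dimension $n+3$, so the classification theorem of Caffarelli--Gidas--Spruck \cite{CGS89} (together with the imposed $SO(3)$-symmetry in $X'$, which pins the center onto the axis) gives $v=(A^2+|y-y_0|^2+|X'|^2)^{-(n+1)/2}$, i.e. \eqref{ex-beta=2}. For $\beta=1$ the weight is $|X'|^{-1}$ and $q=\frac{2(n+2)}{n+1}$ is the critical Hardy--Sobolev exponent for a codimension-three cylindrical singularity in $\mathbb{R}^{n+3}$, whose positive finite-energy solutions are classified as $v=((A+|X'|)^2+|y-y_0|^2)^{-(n+1)/2}$, i.e. \eqref{ex-beta=1}. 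In each case I would finish by inserting the explicit $u=t\,v$ into the quotient and evaluating the resulting radial integrals in closed form through the Beta and Gamma functions to produce $C^*_{n+1,2,1}$ and $C^*_{n+1,2,2}$.

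Beyond the compactness step, the principal obstacle is the rigidity/classification underlying the explicit cases. For $\beta=2$ it is cleanly supplied by the $\mathbb{R}^{n+3}$ reduction and Caffarelli--Gidas--Spruck, but for $\beta=1$ the cylindrical weight $|X'|^{-1}$ breaks full rotational invariance, so the classification cannot be quoted verbatim and must be secured through the moving-plane reduction in $y$ combined with the sharp analysis of the critical cylindrical Hardy--Sobolev operator; establishing this uniqueness, and verifying that the symmetric minimizer indeed realizes the claimed profile rather than merely solving the equation, is where I expect the real work to lie.
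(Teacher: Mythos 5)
Your overall architecture overlaps substantially with the paper's: the interpolation argument for $1<p<n+1$ is exactly the paper's first proof of \eqref{p-main-2}, and your substitution $u=tv$ is precisely the pivot of the paper's treatment of the sharp case $p=2$ (the paper shows $\int_{\mathbb{R}^{n+1}_+}|\nabla u|^2=\int_{\mathbb{R}^{n+1}_+}t^2|\nabla v|^2$ and reduces everything to the weighted inequality \eqref{GGN-2-1} and the classification Theorem \ref{DSWZ-1} from \cite{DSWZ21}, proved there by the method of moving spheres applied to $v$, which has positive boundary values). Your dimension-lifting of $-\Delta v-\frac2t\partial_t v$ to $\Delta_{\mathbb{R}^{n+3}}$ acting on $SO(3)$-symmetric functions is a genuinely different and attractive route for $\beta=2$, where \cite{CGS89} would indeed pin down the profile \eqref{ex-beta=2} --- provided you also prove that $v$ extends to a genuine positive solution across the codimension-three axis $\{X'=0\}$ (local boundedness of $v$ up to $t=0$ plus a removable-singularity argument); the paper gets the needed boundary regularity and positivity of $v$ on $\overline{\mathbb{R}^{n+1}_+}$ from the degenerate-elliptic analysis in \cite{DSWZ21}, and you cannot skip that step.

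There are, however, two genuine gaps. First, the case $p=n+1$ of \eqref{p-main-2} is asserted but not proved: a ``one-dimensional slicing estimate in $t$ controlling the surplus power $|u|^{\beta}$'' is not an argument, and no $L^\infty$ control is available in $W^{1,n+1}$. The paper handles this endpoint by building weighted $L^1$ Gagliardo--Nirenberg inequalities (Lemmas \ref{lem2-1} and \ref{lem2-2}, Proposition \ref{prop2-1}) and applying them to powers of $u$ to reach \eqref{GGN-o-1-n+1}; some substitute of comparable strength is required. Second, and more seriously, the $\beta=1$ classification --- which is half of the theorem's explicit content, including the ``only if'' direction of the equality statement --- is exactly the point you defer (``where I expect the real work to lie''). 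The critical cylindrical Hardy--Sobolev equation $-\Delta v=|X'|^{-1}v^{q-1}$ in $\mathbb{R}^{n+3}$ has no off-the-shelf classification; moving planes in $y$ only gives symmetry and decrease in $y$, which does not determine the profile \eqref{ex-beta=1}. The paper closes this by reformulating the problem as the weighted Neumann-type equation \eqref{genequ-1} and invoking the moving-sphere classification of \emph{all} positive solutions (Theorem \ref{DSWZ-1}); without that ingredient (or an equivalent uniqueness theorem for the associated ODE after the conformal reduction), your argument establishes at most that \eqref{ex-beta=1} solves the Euler--Lagrange equation, not that every extremal has that form.
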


 Note that 
if $u\geq 0$ is an extremal function to inequality \eqref{p-main-2} for $p=2$, then up to a multiple of some constant, it holds
 	\begin{align}\label{weak-2}
    \int_{\mathbb{R}^{n+1}_+} \nabla u\nabla \phi \, dydt=\int_{\mathbb{R}^{n+1}_+} t^{-2+\frac{n-1}{n+1}\beta}u^{1+\frac{2\beta}{n+1}}\phi \, dydt, \quad \forall \phi\in \cal{D}^{1,2}_{0,0}(\mathbb{R}^{n+1}_+).
\end{align}
We define  $u\in \cal{D}^{1,2}_{0,0}(\mathbb{R}^{n+1}_+)$  to be the  weak solution to equation  
\begin{equation}\label{E-L}
	\begin{cases}
	-\Delta u=t^{-2+\frac{n-1}{n+1}\beta}u^{1+\frac{2\beta}{n+1}}, \;\;& \text{in}~ \mathbb{R}^{n+1}_+,\\
	u=0,\quad \;\;& \text{on}~ \partial \mathbb{R}^{n+1}_+,
	\end{cases}
	\end{equation}
	if equality \eqref{weak-2}
 holds. By the standard elliptic estimates and the maximum principle, we know that the nonnegative weak solutions to \eqref{E-L} are smooth and positive in the interior of the upper half space. 
	But since the boundary value of solutions to  \eqref{E-L} is zero, we can not obtain the important information on solutions
	via the method of moving sphere. To be specific, with the zero boundary condition, in the process of carrying out the method of moving sphere, we can not rule out the possibility that  moving spheres centered at certain points on the boundary never reach the critical positions while moving spheres centered at different boundary points  may reach the critical positions.
	Fortunately, we are able to find the suitable transformations for the solutions, which have positive boundary values, and then we can  carry out the method of moving sphere. Meanwhile, we find the equivalence between inequality \eqref{p-main-2}  for $p=2$ and the inequality we obtained in \cite{DSWZ21} (that is, inequality \eqref{sharp-2} below). It helps us to obtain the sharp form of inequality \eqref{p-main-2}  and the classification results for $p=2$. 
	 Actually, all the nontrivial nonnegative weak solutions 
	to equation \eqref{E-L} with $\beta=1$ or $\beta=2$ are of the form \eqref{ex-beta=1} or \eqref{ex-beta=2}, respectively. See Section 2.1.3  for more details. 




\subsection{Sharp constant on bounded domains for $p=2$} Once we know the sharp constant and the explicit form  for the extremal functions of the sharp  Hardy-Sobolev inequality on the upper half space for $p=2$, we are able to study the sharp Hardy-Sobolev  inequality with  weighted distance functions on  bounded domains.

For $\beta$ satisfying
\begin{equation}\label{beta-0}
\begin{cases}
        0\leq \beta \leq \frac{2(n+1)}{n-1},& \text{ if }n\geq 2,\\
       \beta\geq 0, & \text{ if }n=1,
\end{cases}
\end{equation} and a domain $\Omega\subset\mathbb{R}^{n+1}$,
we define
\begin{align}\label{J}
    J_{n+1,\beta,\Omega}[u]=\frac{\int_{\Omega}|\nabla u|^2dx} { (\int_{\Omega}\delta_{\Omega}^{-2+\frac{n-1}{n+1}\beta} |u|^{2+\frac{2\beta}{n+1}} dx)^{\frac {n+1}{n+\beta+1}}},
\end{align}
and
\begin{equation}\label{inf1-1}
 \mu_{n+1, \beta}(\Omega)=\inf_{u\in  C_0^\infty (\Omega)\setminus\{0\}} J_{n+1,\beta,\Omega}[u].
 \end{equation}
 Then $\mu_{n+1,\beta}(\Omega)>0$ if and only if inequality \eqref{p-main-1} holds for $p=2$ in $\Omega$. Apparently,  the ratio is invariant with respect to any dilation and any translation, that is,
 \begin{align}\label{dilation}
     \mu_{n+1,\beta}(\Omega_{R,x_0})=\mu_{n+1,\beta}(\Omega),\quad \forall R>0,\  x_0\in\mathbb{R}^{n+1},
 \end{align}
 where $\Omega_{R,x_0}=R\Omega+x_0=\{Rx+x_0|\, x\in\Omega\}.$

 \smallskip
 
 For simplicity, we write $\mu_{n+1,\beta}^*=\mu_{n+1,\beta}(\mathbb{R}^{n+1}_+)$. By Theorem \ref{thm1-2}, we know $$\mu_{n+1,\beta}^*=(C^*_{n+1,2,\beta})^{-1},$$ and for $\beta$ satisfying
	\begin{equation}\label{beta-strict}
	\begin{cases}
	    0<\beta<\frac{2(n+1)}{n-1}, &\text{ if }n\ge 2,\\
	    \beta>0, & \text{ if }n=1,
	    \end{cases}
	\end{equation}
$\mu_{n+1,\beta}^*$ is achieved in $\cal{D}^{1,2}_{0,0}(\mathbb{R}^{n+1}_+)$. {By contrast, the study of Hardy inequality and Sobolev inequality shows that $\mu^*_{n+1,0}=\frac{1}{4}$ ($n\geq 1$),  $\mu^*_{n+1,\frac{2(n+1)}{n-1}}=S_{n+1,2}^{-1}$ ( $n\geq 2$)}, and both constants {\it are not} achieved in $\cal{D}^{1,2}_{0,0}(\mathbb{R}^{n+1}_+)$.


Naturally, we are interested in  the sharp constant of Hardy-Sobolev inequality on bounded domains. For the endpoints of the range of $\beta$, there have already been many interesting results. First, for $\beta=\frac{2(n+1)}{n-1}$ with $n\geq 2$, it is known  that for any domain $\Omega\subset\mathbb{R}^{n+1}$,
$$\mu_{n+1,\frac{2(n+1)}{n-1}}(\Omega)= \mu^*_{n+1,\frac{2(n+1)}{n-1}}=S_{n+1,2}^{-1},$$
and $\mu_{n+1,\frac{2(n+1)}{n-1}}(\Omega)$ is not achieved unless $\Omega=\mathbb{R}^{n+1}$.
Secondly, for $\beta=0$, it holds 
$$\mu_{n+1,0}(\Omega)\leq \mu^*_{n+1,0}=\frac{1}{4},$$
due to Davies \cite{Davies95} and Marcus, Mizel, and Pinchover \cite{MMP98}. Further,  if $\Omega$ is convex, then it is proved that $\mu_{n+1,0}(\Omega)= \frac{1}{4}$ (see, for example,  \cite{MS97, MMP98}). Moreover, it was showed in \cite{MMP98} that for $\Omega$ being a bounded domain with $C^2$ boundary, the sufficient and necessary condition for $\mu_{n+1,0}(\Omega)$ not to be achieved in $W^{1,2}_0(\Omega)$ is  $\mu_{n+1,0}(\Omega)=\frac 14$. 


 In this paper, we shall study the sharp constant of Hardy-Sobolev inequality on bounded domains  for general $\beta$. First, it is easy to show 
\begin{proposition}\label{leq}
	 Assume that $\Omega$ is a bounded  domain with Lipschitz boundary and $\beta$ satisfies \eqref{beta-0}, then  it holds
	$$0<\mu_{n+1,\beta}(\Omega)\leq \mu^*_{n+1,\beta}.$$
	\end{proposition}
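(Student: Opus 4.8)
The plan is to treat the two inequalities separately, the strict positivity being immediate and the upper bound requiring a concentration (blow-up) argument at a boundary point. For the lower bound $\mu_{n+1,\beta}(\Omega)>0$, I would simply invoke Theorem \ref{thm1-1} with $p=2$: for $\beta$ in the range \eqref{beta-0} the hypothesis \eqref{beta-p} holds with $p=2$ (for $n\ge2$ one has $p=2<n+1$ and $\frac{2(n+1)}{n-1}=\frac{p(n+1)}{n+1-p}$, while for $n=1$ one has $p=2=n+1$), so \eqref{p-main-1} yields a constant $C=C(n,2,\beta,\Omega)$ with $\big(\int_{\Omega}\delta_{\Omega}^{-2+\frac{n-1}{n+1}\beta}|u|^{2+\frac{2\beta}{n+1}}dx\big)^{\frac{n+1}{n+\beta+1}}\le C\int_{\Omega}|\nabla u|^2dx$ for all $u\in C_0^\infty(\Omega)$. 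This says exactly $J_{n+1,\beta,\Omega}[u]\ge C^{-1}$, hence $\mu_{n+1,\beta}(\Omega)\ge C^{-1}>0$.

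For the upper bound $\mu_{n+1,\beta}(\Omega)\le\mu^*_{n+1,\beta}$ I would transplant near-optimal half-space test functions into $\Omega$ near a flat boundary point. Fix $\varepsilon>0$ and, using that $\mu^*_{n+1,\beta}=\mu_{n+1,\beta}(\mathbb{R}^{n+1}_+)$ is by definition an infimum over $C_0^\infty(\mathbb{R}^{n+1}_+)$, choose $U\in C_0^\infty(\mathbb{R}^{n+1}_+)$ with $J_{n+1,\beta,\mathbb{R}^{n+1}_+}[U]\le\mu^*_{n+1,\beta}+\varepsilon$; taking a near-minimizer (rather than an actual extremal) keeps the argument uniform across the whole range \eqref{beta-0}, endpoints included. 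Since $U$ has compact support in the open half space, $\operatorname{supp}U\subset\{t\ge c_0\}\cap B_M(0)$ for some $c_0,M>0$. Because $\partial\Omega$ is Lipschitz, Rademacher's theorem gives a tangent hyperplane at almost every boundary point; after a rigid motion (the ratio $J$ is invariant under translations by \eqref{dilation} and obviously under rotations, since $|\nabla u|$ and $\delta_\Omega$ are) I may assume $0\in\partial\Omega$ is such a point with inner normal $e_{n+1}$, so that locally $\Omega=\{(y,t):t>\phi(y)\}$ with $\phi$ Lipschitz, $\phi(0)=0$, $\nabla\phi(0)=0$, and hence $|\phi(z)|\le|z|\,\omega(|z|)$ with $\omega(r)\to0$ as $r\to0^+$.

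Now rescale by setting $\Omega^\lambda=\lambda^{-1}\Omega$, whose boundary near the origin is the graph $t=\phi_\lambda(y):=\lambda^{-1}\phi(\lambda y)$. The bound $|\phi(z)|\le|z|\,\omega(|z|)$ gives $\sup_{|y|\le2M}|\phi_\lambda(y)|\le2M\,\omega(2M\lambda)\to0$ as $\lambda\to0$, so for $\lambda$ small the local graph region covers $B_M(0)$ and $\phi_\lambda<c_0/2$ there; therefore $\operatorname{supp}U\subset\{t\ge c_0\}\cap B_M(0)\subset\Omega^\lambda$, i.e. $U\in C_0^\infty(\Omega^\lambda)$. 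By the dilation invariance \eqref{dilation} this gives $\mu_{n+1,\beta}(\Omega)=\mu_{n+1,\beta}(\Omega^\lambda)\le J_{n+1,\beta,\Omega^\lambda}[U]$, and it remains to pass to the limit $\lambda\to0$. The Dirichlet integral $\int|\nabla U|^2$ is literally unchanged since $\operatorname{supp}U\subset\Omega^\lambda$, so the whole issue is the convergence of the weighted denominator.

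The crux, and the step I expect to be the main obstacle, is showing $\delta_{\Omega^\lambda}(y,t)\to t$ uniformly on $\operatorname{supp}U$. For $(y,t)$ with $|y|\le M$ and $c_0\le t\le M$, the nearest point of $\partial\Omega^\lambda$ lies in the local graph part (the remainder of $\partial\Omega^\lambda$ recedes like $\lambda^{-1}$), so $\delta_{\Omega^\lambda}(y,t)=\operatorname{dist}\big((y,t),\operatorname{graph}\phi_\lambda\big)$; estimating this above by $|t-\phi_\lambda(y)|$ and below by $t-\sup_{|y'|\le 2M}|\phi_\lambda(y')|$ and using $\sup|\phi_\lambda|\to0$ yields the claimed uniform convergence. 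Since on $\operatorname{supp}U$ the distances stay in a fixed compact subinterval of $(0,\infty)$, the weight $\delta_{\Omega^\lambda}^{-2+\frac{n-1}{n+1}\beta}$ is uniformly bounded and converges pointwise to $t^{-2+\frac{n-1}{n+1}\beta}$, so dominated convergence gives $\int_{\Omega^\lambda}\delta_{\Omega^\lambda}^{-2+\frac{n-1}{n+1}\beta}|U|^{2+\frac{2\beta}{n+1}}dx\to\int_{\mathbb{R}^{n+1}_+}t^{-2+\frac{n-1}{n+1}\beta}|U|^{2+\frac{2\beta}{n+1}}dx$. Consequently $J_{n+1,\beta,\Omega^\lambda}[U]\to J_{n+1,\beta,\mathbb{R}^{n+1}_+}[U]\le\mu^*_{n+1,\beta}+\varepsilon$, and letting first $\lambda\to0$ and then $\varepsilon\to0$ gives $\mu_{n+1,\beta}(\Omega)\le\mu^*_{n+1,\beta}$. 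The essential point is exactly the control of the distance function under blow-up at a merely Lipschitz boundary point, which is why choosing a point of differentiability—so that the blow-up limit is the flat half space $\{t>0\}$ with $\delta=t$—is indispensable.
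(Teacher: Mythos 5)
Your proposal is correct and follows essentially the same route as the paper: positivity from Theorem \ref{thm1-1}, and the upper bound by transplanting a near-optimal test function from $C_0^\infty(\mathbb{R}^{n+1}_+)$ into $\Omega$ near a boundary point admitting a tangent hyperplane, using the scaling invariance \eqref{dilation} and the fact that $\delta_\Omega$ is asymptotically comparable to $t$ there. The only cosmetic difference is that the paper rescales the test function inside a fixed interior cone and uses only the one-sided bound $\delta_\Omega<(1+\varepsilon)t$ (which suffices since the exponent $-2+\frac{n-1}{n+1}\beta\le 0$), whereas you rescale the domain and prove two-sided uniform convergence of the distance function.
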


Secondly, we have the following sufficient condition for $\mu_{n+1,\beta}(\Omega)$ being achieved  in $W_0^{1,2}(\Omega).$
\begin{proposition}\label{extremal-attain}
	Assume that $\Omega$ is a bounded domain with $C^1$ boundary and $\beta$ satisfies \eqref{beta-0}. If $\mu_{n+1,\beta}(\Omega)<	\mu^*_{n+1,\beta},$ then $\mu_{n+1,\beta}(\Omega)$ is achieved in $W_0^{1,2}(\Omega)$.
\end{proposition}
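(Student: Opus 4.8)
The plan is to extract a minimizing sequence for $\mu_{n+1,\beta}(\Omega)$ and apply the concentration--compactness principle, using the strict gap $\mu_{n+1,\beta}(\Omega)<\mu^*_{n+1,\beta}$ to rule out loss of compactness. Throughout write $q=2+\frac{2\beta}{n+1}$ and $\alpha=-2+\frac{n-1}{n+1}\beta$ for the exponents appearing in $J_{n+1,\beta,\Omega}$, so that $2/q=\frac{n+1}{n+\beta+1}$ and $0<2/q<1$. One may assume $\beta$ lies in the strict range \eqref{beta-strict}, since at the endpoint $\beta=\frac{2(n+1)}{n-1}$ one has $\mu_{n+1,\beta}(\Omega)=\mu^*_{n+1,\beta}$ for every domain, so the hypothesis is vacuous there. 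First I would fix $\{u_k\}\subset C_0^\infty(\Omega)$ with $\int_\Omega\delta^\alpha|u_k|^q\,dx=1$ and $\int_\Omega|\nabla u_k|^2\,dx\to\mu_{n+1,\beta}(\Omega)$. This sequence is bounded in $W_0^{1,2}(\Omega)$, so after passing to a subsequence $u_k\rightharpoonup u$ weakly in $W_0^{1,2}(\Omega)$ and a.e., while $|\nabla u_k|^2\,dx\rightharpoonup\sigma$ and $\delta^\alpha|u_k|^q\,dx\rightharpoonup\nu$ weakly-$*$ as measures on $\overline\Omega$. The concentration--compactness principle then furnishes an at most countable set $\{x_j\}\subset\overline\Omega$ and weights $\sigma_j,\nu_j\ge0$ with
\[
\nu=\delta^\alpha|u|^q\,dx+\sum_j\nu_j\,\delta_{x_j},\qquad \sigma\ge|\nabla u|^2\,dx+\sum_j\sigma_j\,\delta_{x_j}.
\]
Since $\Omega$ is bounded, testing against the constant function gives $\sigma(\overline\Omega)=\mu_{n+1,\beta}(\Omega)$ and $\nu(\overline\Omega)=1$.

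Two local facts then drive the argument. First, because $\beta<\frac{2(n+1)}{n-1}$, the exponent $q$ is strictly subcritical and $\delta^\alpha$ is bounded on compact subsets of $\Omega$; hence the embedding $W^{1,2}\hookrightarrow L^q(\delta^\alpha\,dx)$ is compact away from $\partial\Omega$, and every concentration point $x_j$ must lie on $\partial\Omega$. Second, at a boundary concentration point $x_j$ I would flatten $\partial\Omega$ near $x_j$ by a $C^1$ diffeomorphism and rescale; using that $\delta(x)$ is asymptotic to the distance to the tangent hyperplane, the transported weight converges to the model weight $t^\alpha$ and the blow-up limit lives on a half-space. Comparison with the sharp half-space constant of Theorem \ref{thm1-2} then yields the boundary concentration inequality
\[
\sigma_j\ge\mu^*_{n+1,\beta}\,\nu_j^{2/q}.
\]
The $C^1$ hypothesis enters precisely here: it guarantees that the rescaled domains converge to a half-space rather than a cone, and that the distance weight converges to $t^\alpha$ under the blow-up.

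With these in hand the conclusion follows from a short arithmetic. Put $a=\int_\Omega\delta^\alpha|u|^q\,dx$, so that $\sum_j\nu_j=1-a$. The definition of $\mu_{n+1,\beta}(\Omega)$ applied to $u$, together with the boundary inequality and the estimate $\sum_j\nu_j^{2/q}\ge(1-a)^{2/q}$ (valid since $0<2/q<1$), gives
\[
\mu_{n+1,\beta}(\Omega)=\sigma(\overline\Omega)\ge\int_\Omega|\nabla u|^2\,dx+\sum_j\sigma_j\ge\mu_{n+1,\beta}(\Omega)\,a^{2/q}+\mu^*_{n+1,\beta}\,(1-a)^{2/q}.
\]
If $a<1$, then $(1-a)^{2/q}>0$ and $\mu^*_{n+1,\beta}>\mu_{n+1,\beta}(\Omega)$ force $1>a^{2/q}+(1-a)^{2/q}$; but since $0<2/q<1$ the map $t\mapsto t^{2/q}$ is subadditive, so $1=(a+(1-a))^{2/q}\le a^{2/q}+(1-a)^{2/q}$, a contradiction. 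Hence $a=1$, there are no atoms, and $\int_\Omega\delta^\alpha|u|^q\,dx=1$. Weak lower semicontinuity gives $\int_\Omega|\nabla u|^2\,dx\le\mu_{n+1,\beta}(\Omega)$, while the definition of the infimum gives the reverse inequality; therefore $u$ attains $\mu_{n+1,\beta}(\Omega)$ in $W_0^{1,2}(\Omega)$.

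I expect the main obstacle to be the second local fact: a clean derivation of the boundary concentration inequality with the sharp constant $\mu^*_{n+1,\beta}$. This requires controlling the distance weight $\delta^\alpha$ under the flatten-and-rescale blow-up and verifying its convergence to the model weight $t^\alpha$, for which the $C^1$ regularity of $\partial\Omega$ is indispensable; the interior compactness and the final arithmetic are comparatively routine.
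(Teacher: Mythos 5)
Your argument is correct in outline but follows a genuinely different route from the paper. The paper never introduces the measure-theoretic decomposition: it applies the Brezis--Lieb lemma to split $\int_\Omega\delta^\alpha|u_j|^q$ into the contributions of $u$ and $u_j-u$, and then applies the global $\varepsilon$-level inequality of Lemma \ref{var-level},
$(\int_\Omega\delta^\alpha|u_j-u|^q)^{2/q}\le((\mu^*_{n+1,\beta})^{-1}+\varepsilon)\int_\Omega|\nabla(u_j-u)|^2+C(\varepsilon)\int_\Omega|u_j-u|^2$,
directly to the remainder $u_j-u$; combined with $\|\nabla u_j\|_2^2=\|\nabla(u_j-u)\|_2^2+\|\nabla u\|_2^2+o(1)$ and the gap $\mu_{n+1,\beta}(\Omega)^{-1}>(\mu^*_{n+1,\beta})^{-1}+\varepsilon$, this forces $\nabla(u_j-u)\to0$ in $L^2$ in a few lines. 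Your concentration--compactness version localizes the same information into atoms and a boundary concentration inequality $\sigma_j\ge\mu^*_{n+1,\beta}\nu_j^{2/q}$; it buys a sharper picture of where compactness can fail, but at the cost of having to justify a \emph{weighted} Lions decomposition for the singular measure $\delta^\alpha|u_k|^q\,dx$ and then prove the pointwise concentration estimate by flattening and blow-up --- and that estimate is precisely the content of Lemma \ref{var-level}, so you have not avoided the paper's key technical lemma, only repackaged it. Two small points to repair: (i) your reduction to the strict range \eqref{beta-strict} discards $\beta=0$, which is allowed by \eqref{beta-0} and is \emph{not} vacuous (it is the Hardy-constant case); fortunately your final arithmetic survives there since $2/q=1$ still yields $1>a+(1-a)$ from the strict gap, but you should say so rather than exclude it. (ii) You should note that $a>0$ (equivalently $u\not\equiv0$) is not needed as a separate step, since $a<1$ already produces the contradiction, and that $\mu_{n+1,\beta}(\Omega)>0$ (Proposition \ref{leq}) is what makes the division in the last inequality legitimate.
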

Proposition \ref{extremal-attain} was also obtained by Chen and Li \cite{CL07} for dimension $\geq {3}$, through a precise  description  of the related Palais-Smale sequence, while we obtain the result via  the $\varepsilon$-level sharp inequality (Lemma \ref{var-level}), which plays the important role in proving Theorem \ref{thm1-1}.

 If $\mu_{n+1,\beta}(\Omega)$ is achieved and $u\geq 0$ is the extremal function, then up to a multiple of some constant, it holds
 	\begin{align}\label{weak-3}
    \int_{\Omega} \nabla u\nabla \phi \, dydt=\int_{\Omega} t^{-2+\frac{n-1}{n+1}\beta}u^{1+\frac{2\beta}{n+1}}\phi \, dydt, \quad \forall \phi\in W^{1,2}_{0}(\Omega).
\end{align}
We define  $u\in W^{1,2}_{0}(\Omega)$  to be the  weak solution to equation  
\begin{equation}\label{E-L-1}
	\begin{cases}
	-\Delta u=\delta^{-2+\frac{n-1}{n+1}\beta}u^{1+\frac{2\beta}{n+1}}, \;\;& \text{in}~ \Omega,\\
	u=0,\quad \;\;& \text{on}~ \partial \Omega,
	\end{cases}
	\end{equation}
	if equality \eqref{weak-3}
 holds. So if  $\mu_{n+1,\beta}(\Omega)<	\mu^*_{n+1,\beta},$  there is a positive weak solution to equation \eqref{E-L-1}.
 
 \medskip

We consider some specific domains. First, for the unit ball, the following property holds (see \cite{CL07} for $n \ge 2$, which actually holds for $n=1$). 
\begin{proposition}\label{ball-equality}
	Assume that $\beta$ satisfies \eqref{beta-0}. We have 
	\begin{align*}
		\mu_{n+1,\beta}(B_1(0))=	\mu^*_{n+1,\beta}.
	\end{align*}
	Moreover, $\mu_{n+1,\beta}(B_1(0))$ cannot be achieved by any function in $W_0^{1,2}(B_1(0))$.
\end{proposition}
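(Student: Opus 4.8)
The plan is to identify $\mu_{n+1,\beta}(B_1(0))$ with the infimum of a functional that is conformally conjugate to $J_{n+1,\beta,\mathbb{R}^{n+1}_+}$ on the half-space, and then to compare two weights. Throughout set $s=-2+\frac{n-1}{n+1}\beta$ and $q=2+\frac{2\beta}{n+1}$, so that the denominator of $J_{n+1,\beta,\Omega}$ is $(\int_\Omega\delta_\Omega^{\,s}|u|^q\,dx)^{\frac{n+1}{n+\beta+1}}$; note that $s<0$ for every admissible $\beta$ except the Sobolev endpoint $\beta=\frac{2(n+1)}{n-1}$ (with $n\ge2$), where $s=0$ and the claim reduces to the classical non-attainment of the sharp Sobolev constant on a proper domain recalled above. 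Since Proposition \ref{leq} already gives $\mu_{n+1,\beta}(B_1(0))\le\mu^*_{n+1,\beta}$, the task is the reverse inequality together with non-attainment.

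First I would fix the M\"obius transformation $\Phi:\mathbb{R}^{n+1}_+\to B_1(0)$ realizing the isometry between the upper half-space and ball models of hyperbolic space, and write $D\Phi=\lambda\,O$ with $\lambda>0$ and $O\in O(n+1)$. Equating $\Phi^\ast\big(\tfrac{4|dy|^2}{(1-|y|^2)^2}\big)=\tfrac{|dx|^2}{t^2}$ gives the pointwise identity $t\,\lambda=\tfrac12(1-|\Phi|^2)$, that is $\delta_+\lambda=\tfrac12(1-|\Phi|^2)$ with $\delta_+=t$. To $w$ on $B_1(0)$ I associate $u=\lambda^{(n-1)/2}\,(w\circ\Phi)$ on $\mathbb{R}^{n+1}_+$; because $\Phi$ is a conformal diffeomorphism between flat domains, the conformal covariance of the Dirichlet integral yields $\int_{\mathbb{R}^{n+1}_+}|\nabla u|^2\,dx=\int_{B_1(0)}|\nabla w|^2\,dy$. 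Changing variables in the weighted term with $\det D\Phi=\lambda^{n+1}$, a short computation shows $q\tfrac{n-1}{2}-(n+1)=s$, so the powers of $\lambda$ combine to $\delta_+^{\,s}\lambda^{s}=(\delta_+\lambda)^s=\big(\tfrac12(1-|y|^2)\big)^s$, whence $\int_{\mathbb{R}^{n+1}_+}\delta_+^{\,s}|u|^q\,dx=\int_{B_1(0)}\big(\tfrac12(1-|y|^2)\big)^s|w|^q\,dy$. As $w\mapsto u$ is a bijection of $C_0^\infty(B_1(0))$ onto $C_0^\infty(\mathbb{R}^{n+1}_+)$ (the pole of $\Phi$ lies on $\partial B_1(0)$, so compact supports correspond), taking infima gives $\mu^*_{n+1,\beta}=\inf_{w\in C_0^\infty(B_1(0))}\tilde J[w]$, where $\tilde J$ is built from $J_{n+1,\beta,B_1(0)}$ by replacing the weight $\delta_{B_1}^{\,s}=(1-|y|)^s$ with $\big(\tfrac12(1-|y|^2)\big)^s$.

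The equality now follows from a one-line weight comparison. On $B_1(0)$ we have $\tfrac12(1-|y|^2)=(1-|y|)\,\tfrac{1+|y|}{2}\le 1-|y|=\delta_{B_1}(y)$, so for $s<0$ the reversed inequality $\big(\tfrac12(1-|y|^2)\big)^s\ge\delta_{B_1}^{\,s}$ holds pointwise; hence $\tilde J[w]\le J_{n+1,\beta,B_1(0)}[w]$ for every $w$, and taking infima gives $\mu^*_{n+1,\beta}=\inf\tilde J\le\mu_{n+1,\beta}(B_1(0))$. With Proposition \ref{leq} this proves $\mu_{n+1,\beta}(B_1(0))=\mu^*_{n+1,\beta}$. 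For non-attainment, suppose $w_0\in W_0^{1,2}(B_1(0))\setminus\{0\}$ realized the infimum, so $J_{n+1,\beta,B_1(0)}[w_0]=\mu^*_{n+1,\beta}$. Approximating $w_0$ by $C_0^\infty(B_1(0))$ functions and using continuity of $\tilde J$ gives $\tilde J[w_0]\ge\mu^*_{n+1,\beta}$, so $\mu^*_{n+1,\beta}\le\tilde J[w_0]\le J_{n+1,\beta,B_1(0)}[w_0]=\mu^*_{n+1,\beta}$ and therefore $\tilde J[w_0]=J_{n+1,\beta,B_1(0)}[w_0]$. This forces $\int_{B_1(0)}\big[\big(\tfrac12(1-|y|^2)\big)^s-\delta_{B_1}^{\,s}\big]|w_0|^q\,dy=0$; but the integrand is nonnegative and, since $s<0$, strictly positive wherever $|y|<1$ and $w_0\ne0$ (the two weights agree only on the null set $\{|y|=1\}$), so $w_0\equiv0$, a contradiction.

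I expect the main obstacle to be the rigorous justification of the conformal covariance identity $\int|\nabla u|^2\,dx=\int|\nabla w|^2\,dy$ together with the admissibility and integrability bookkeeping for the singular weight $\delta^{\,s}$ and the behavior of $u$ near the image of the pole of $\Phi$, where the support correspondence must be verified; these facts are classical but must be checked with care, whereas the weight comparison producing both the sharp value and the non-attainment is elementary.
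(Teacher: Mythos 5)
Your proposal is correct and follows essentially the same route as the paper: the authors also conjugate the half-space functional to a ball via a Kelvin transformation (they use $B_{1/2}(-e_{n+1}/2)$ and then invoke dilation invariance), obtain the weight $\big(\tfrac14-|x+\tfrac{e_{n+1}}{2}|^2\big)^{s}$, and compare it pointwise with $\delta^{s}$ exactly as in your factorization $\tfrac12(1-|y|^2)=\tfrac{1+|y|}{2}(1-|y|)\le 1-|y|$. The non-attainment argument via strictness of that comparison for $s<0$ (and reduction to the Sobolev case at the endpoint) is likewise the paper's argument.
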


As we mentioned above, for $\beta=0$ or $\frac{2(n+1)}{n-1}$,  the results are known.  In the nonlinear cases (for $\beta>0$), the approach to obtain the results for $\beta<\frac{2(n+1)}{n-1}$ is different to the case of $\beta=\frac{2(n+1)}{n-1}$ (the sharp Sobolev inequality). 
 It is  worth pointing out that  there is no positive solution to \eqref{E-L-1} with  $\Omega=B_1(0)$, $n\geq 2$ and $\beta=\frac{2(n+1)}{n-1}$
, while there are positive radially symmetric solutions 
to \eqref{E-L-1} with  $\Omega=B_1(0)$, $n\geq 2$ and $0<\beta<\frac{2(n+1)}{n-1}$ (due to Cheng, Wei and Zhang \cite{CWZ21}). Proposition \ref{ball-equality} also implies that the minimizing sequence for $\mu_{n+1,\beta}(B_1(0))$ must blow up. 


Secondly,  
we are able to find some annular domain $\Omega$, such that $0<\mu_{n+1,\beta}(\Omega)<\mu_{n+1,\beta}^*$ (see Section 4 below), thus  the sharp constant is achieved on such a domain.

We are curious about whether for any bounded non-convex domain with $C^1$ boundary, $\mu_{n+1,\beta}(\Omega)<\mu_{n+1,\beta}^*.$ If this is the case, then  $\mu_{n+1,\beta}(\Omega)$ is achieved in $W_0^{1,2}(\Omega)$.  For this purpose, it is quite interesting to study the related blow up behavior 
and explore the role of the extremal functions of the sharp Hardy-Sobolev inequality on the upper half space in the study.  We will try to construct some auxiliary functions derived from the  extremal functions \eqref{ex-beta=1} and \eqref{ex-beta=2} to solve the question in a coming paper. 

Finally, we also speculate a positive answer to the following conjecture:



\noindent{\bf Conjecture.} Assume that $\Omega$ is a bounded domain with $C^1$ boundary and $\beta$ satisfies \eqref{beta-strict}. Then   $\mu_{n+1,\beta}(\Omega)$ is achieved in $W_0^{1,2}(\Omega)$ only if  $\mu_{n+1,\beta}(\Omega)<	\mu_{n+1,\beta}^*.$

\medskip

The paper is organized as follows: In Section 2, we present the proof for Theorem \ref{thm1-2}, and then apply it to obtain Theorem \ref{thm1-1}. In Section 3, we consider  the  sharp constant on bounded domain for $p=2$, and give the proof of Proposition \ref{leq}, Proposition \ref{extremal-attain} and Proposition 
\ref{ball-equality}. In Section 4, we give examples for some specific domains $\Omega$,  which may have non-Lipschitz boundary point or may be unbounded.
\section{Hardy-Sobolev inequalities on $\mathbb{R}^{n+1}_+$ and bounded domains}


\subsection{Hardy-Sobolev inequality on $\mathbb{R}^{n+1}_+$}
 We first provide the  proof for Theorem \ref{thm1-2}. 
 One simple approach to obtain inequality \eqref{p-main-2} is to use the  interpolation between Hardy inequality and Sobolev inequality. Such an approach usually fails to obtain the inequality  for $p=n+1$ (that is, this allows us to get the inequality only for the case $1<p<n+1$). 
 
 Another approach for deriving inequality \eqref{p-main-2} with non-sharp constant is the classical way to derive the  Gagliardo-Nirenberg  inequality. This also yields the inequality for  the case $p=n+1$. 
 
 When it comes to the sharp constant and extremal functions for $p=2$, they can be derived from our early results in \cite{DSWZ21}. 
 

\subsubsection{Interpolation}

For $p>1$,  
the Hardy inequality on $\mathbb{R}^{n+1}_+$ is the  following:
\begin{align}\label{Hardy-g-p}
&\int_{\mathbb{R}^{n+1}_+} t^{-p}|u|^p dydt \leq (\frac{p}{p-1})^p \int_{\mathbb{R}^{n+1}_+} |\nabla u|^p dydt, \quad\forall\,  u\in {\cal D}_0^{1,p}(\mathbb{R}^{n+1}_+).
\end{align}
Then by H\"{o}lder inequality and Sobolev inequality, 
for $p\in (1,n+1)$,
\begin{align*}
&\int_{\mathbb{R}^{n+1}_+}  t^{-p+\frac{n+1-p}{n+1}\beta} u^{p+\frac{p\beta}{n+1}} dydt\\
\leq &\big(\int_{\mathbb{R}^{n+1}_+} t^{-p}|u|^pdydt \big)^{1-\frac{(n+1-p)\beta}{p(n+1)}}(\int_{\mathbb{R}^{n+1}_+} |u|^{\frac{p(n+1)}{n+1-p}}dx)^{\frac{(n+1-p)\beta}{p(n+1)}}\\
\leq& C\big(\int_{\mathbb{R}^{n+1}_+} |\nabla u|^p dydt\big)^{\frac{n+
\beta+1}{n+1}}.
\end{align*}
This gives the proof of inequality \eqref{p-main-2} in Theorem \ref{thm1-2} for the case $1<p<n+1$.

\bigskip

\subsubsection{ Gagliado-Nirenberg type inequalities in $\mathbb{R}^{n+1}_+$}

We only consider $u\in C^\infty_0({\mathbb{R}^{n+1}_+}) $ since the general case can be gotten by approximation.  So we assume $u(y,0)=0$. With this extra condition we can extend our inequalities obtained  in the early paper \cite{DSWZ21}. First, similar to Lemma 2.1 and Lemma 2.2 in \cite{DSWZ21}, we have the following two lemmas.

\begin{lemma}\label{lem2-1}  For any  $k\ne0$ and $u\in C^\infty_0({\mathbb{R}^{n+1}_+})$, 
\begin{align}\label{eq:lem2-1}
     \int_{\RpN}t^{k-1}|u|dydt\leq \frac{1}{|k|}\int_{\RpN}t^{k}|\nabla u|dydt.
\end{align}
\end{lemma}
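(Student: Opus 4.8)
The plan is to reduce the statement to a one-dimensional weighted inequality in the $t$-variable for each fixed $y\in\mathbb{R}^n$, and then integrate in $y$ by Fubini's theorem. Since $u\in C^\infty_0(\RpN)$ is compactly supported in the \emph{open} half space, for each fixed $y$ the function $t\mapsto u(y,t)$ is smooth and supported in a compact subinterval of $(0,\infty)$; in particular all integrals below converge absolutely and every boundary term produced by integration by parts vanishes.

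First I would replace $|u|$ by a differentiable surrogate. The function $v:=|u|$ is Lipschitz, vanishes wherever $u$ does, and satisfies $|\partial_t v|\le|\partial_t u|\le|\nabla u|$ almost everywhere; one may justify this by working with $\sqrt{u^2+\ve^2}$ and letting $\ve\to0$, or simply by the a.e.\ chain rule for Lipschitz functions. Thus it suffices to prove $\int_{\RpN} t^{k-1}v\,dydt\le\frac1{|k|}\int_{\RpN} t^{k}|\partial_t v|\,dydt$ and then invoke $|\partial_t v|\le|\nabla u|$.

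The key step is the integration by parts in $t$. Using $t^{k-1}=\frac1k\partial_t(t^k)$ for $k\ne0$, I would write, for fixed $y$,
\begin{align*}
\int_0^\infty t^{k-1}v\,dt=\frac1k\int_0^\infty \partial_t(t^k)\,v\,dt=\frac1k\Big[t^k v\Big]_0^\infty-\frac1k\int_0^\infty t^k\partial_t v\,dt=-\frac1k\int_0^\infty t^k\partial_t v\,dt,
\end{align*}
the boundary term vanishing because $v(y,\cdot)$ is compactly supported in $(0,\infty)$. Since the left-hand side is nonnegative, the right-hand side equals its own absolute value, so $\int_0^\infty t^{k-1}v\,dt\le\frac1{|k|}\int_0^\infty t^k|\partial_t v|\,dt$. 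Integrating this in $y$ over $\mathbb{R}^n$ and using $|\partial_t v|\le|\nabla u|$ gives \eqref{eq:lem2-1}.

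The only delicate point—and the step I would flag as the main (if minor) obstacle—is the vanishing of the boundary term at $t=0$ when $k<0$, where $t^k$ blows up. For $u$ compactly supported in the open half space this is immediate, as $v\equiv0$ near $t=0$, which is exactly why the reduction to $C^\infty_0(\RpN)$ (with the general case recovered by the density argument announced just before the lemma) is made at the outset. If one only assumed $u(y,0)=0$ with support reaching the boundary, the Lipschitz bound $v(y,t)\le Ct$ would control the boundary term only for $k>-1$; so the compact-support reduction is the clean device that handles all $k\ne0$ uniformly.
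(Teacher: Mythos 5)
Your proof is correct and is essentially the argument the paper intends: the paper omits a written proof, deferring to \cite[Lemma 2.1]{DSWZ21}, where the inequality is obtained by the same one-dimensional reduction in $t$ (fundamental theorem of calculus/integration by parts against $t^{k-1}=\frac1k\partial_t(t^k)$, boundary term killed by the support condition, then integration in $y$). Your closing remark correctly identifies why compact support in the open half space (or at least $u(y,0)=0$, as Remark \ref{rem2-1} notes) is the point that makes the case $k<0$ work.
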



 \begin{lemma}\label{lem2-2}   For any  $k$ and $u\in C^\infty_0({\mathbb{R}^{n+1}_+})$, 
\begin{align}\label{lem2-2-eq}
   \big(\int_{\RpN}t^{\frac{n+1}{n}k}|u|^{\frac{n+1}{n}}dydt\big)^{\frac{n}{n+1}}\leq  2^{\frac{1}{n}}\int_{\RpN} t^{k}|\nabla u|dydt.
\end{align}
\end{lemma}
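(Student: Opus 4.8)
The plan is to remove the weight from the quantity we must estimate by absorbing it into the function, and then to apply the classical (unweighted) $L^1$ Gagliardo--Nirenberg--Sobolev inequality in dimension $n+1$, whose critical exponent is exactly $\frac{n+1}{n}$. Since by the reduction already made we may assume $u\in C_0^\infty(\mathbb{R}^{n+1}_+)$ with $u$ vanishing in a neighborhood of $\{t=0\}$, I would first set $v:=t^k u$. Then $v\in C_0^\infty(\mathbb{R}^{n+1}_+)$ as well (here it is crucial that the support of $u$ stays away from $t=0$, where $t^k$ may be singular), and extending it by zero gives an admissible test function on all of $\mathbb{R}^{n+1}$. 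The gain of this substitution is that the left-hand side becomes unweighted: $\int_{\RpN}t^{\frac{n+1}{n}k}|u|^{\frac{n+1}{n}}\,dydt=\int_{\RpN}|v|^{\frac{n+1}{n}}\,dydt$.

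Next I would apply the $L^1$ Sobolev inequality to $v$ in its product (Loomis--Whitney) form, obtained by writing $|v|^{\frac{n+1}{n}}$ as a product over the $n+1$ coordinate directions $y_1,\dots,y_n,t$, bounding each factor by the one-dimensional integral of the corresponding partial derivative via the fundamental theorem of calculus (legitimate since $v$ has compact support), and then applying the generalized H\"older inequality after integrating. This produces a bound of the form
\begin{align*}
\big(\int_{\RpN}|v|^{\frac{n+1}{n}}\,dydt\big)^{\frac{n}{n+1}}\le C\prod_{j=1}^n\big(\int_{\RpN}|\partial_{y_j}v|\,dydt\big)^{\frac{1}{n+1}}\big(\int_{\RpN}|\partial_t v|\,dydt\big)^{\frac{1}{n+1}}.
\end{align*}

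The one genuinely weighted step, and the step I expect to be the main obstacle, is converting the partial derivatives of $v$ back into weighted derivatives of $u$. For the horizontal directions this is immediate, since $\partial_{y_j}v=t^k\partial_{y_j}u$ gives $\int_{\RpN}|\partial_{y_j}v|\,dydt=\int_{\RpN}t^k|\partial_{y_j}u|\,dydt\le\int_{\RpN}t^k|\nabla u|\,dydt$. In the vertical direction, however, $\partial_t v=t^k\partial_t u+kt^{k-1}u$, and the extra term $kt^{k-1}u$ carries a lower power of $t$ and no derivative on $u$, so it cannot be absorbed by the gradient directly. This is exactly what Lemma \ref{lem2-1} is designed to handle: the triangle inequality gives $\int_{\RpN}|\partial_t v|\,dydt\le\int_{\RpN}t^k|\partial_t u|\,dydt+|k|\int_{\RpN}t^{k-1}|u|\,dydt$, and Lemma \ref{lem2-1} bounds the second term by $\int_{\RpN}t^k|\nabla u|\,dydt$, so that $\int_{\RpN}|\partial_t v|\,dydt\le 2\int_{\RpN}t^k|\nabla u|\,dydt$. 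It is here that the hypothesis that $u$ vanishes on $\{t=0\}$ is essential, as it makes $v$ admissible and makes Lemma \ref{lem2-1} applicable for every $k$ of either sign; the factor $2$ produced by this estimate is the source of the stated constant (the case $k=0$ being simply the unweighted inequality, where the error term is absent). Substituting the horizontal and vertical estimates into the displayed Sobolev bound and collecting constants would then complete the proof, the only remaining task being the routine bookkeeping of the numerical constant.
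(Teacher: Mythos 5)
Your proof is correct and follows essentially the same route as the paper, which obtains this lemma as the weighted analogue of Lemma 2.2 in \cite{DSWZ21}: write the integrand as $|t^ku|^{\frac{n+1}{n}}$, run the one-dimensional fundamental-theorem-of-calculus plus generalized H\"older (Loomis--Whitney) argument of the classical $L^1$ Gagliardo--Nirenberg inequality, and absorb the commutator term $kt^{k-1}u$ from the $t$-derivative via Lemma \ref{lem2-1}, which is exactly where the factor $2$ and the hypothesis $u\in C^\infty_0(\mathbb{R}^{n+1}_+)$ enter. Your bookkeeping in fact yields the slightly better constant $2^{\frac{1}{n+1}}\le 2^{\frac1n}$, so the stated inequality follows.
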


\begin{remark}\label{rem2-1}
	In Lemma \ref{lem2-1}, for $k<0$, we need to assume that $u(y, 0)=0$. For $k>0$, this condition is not needed --- this was proved in  \cite[Lemma 2.1]{DSWZ21}.
\end{remark}

\smallskip

By Lemma \ref{lem2-1}, Lemma \ref{lem2-2} and H\"{o}lder inequality, we easily obtain the following Gagliado-Nirenberg type inequalities.
\begin{proposition}\label{prop2-1}
	(1) Assume that $k \ne -n$ or $0$, and $l$ satisfies
	\begin{equation}\label{l-condition}
	\begin{cases}
	l\in [k-1, \frac {n+1}n k], & \text{ if } k \in(-n, 0) \cup (0, \infty),\\
	l\in [ \frac {n+1}n k, k-1], & \text{ if } k<-n.
	\end{cases}
	\end{equation}
   Then	there is a positive constant $C=C(n,k)$ such that for all $u\in C_0^\infty ({\mathbb{R}^{n+1}_+})$,
	\begin{equation}\label{GGN-1}
	(\int_{\mathbb{R}^{n+1}_+ }t^l |u|^{\frac{n+l+1}{n+k}} dydt)^{\frac{n+k}{n+l+1}} \le C \int_{\mathbb{R}^{n+1}_+ }t^k|\nabla u| dy dt.
		\end{equation}
	(2) For $k=-n$ and $ l=-n-1$, let $q\in [1,\frac{n+1}{n}]$. Then there is a positive constant $C=C(n)$ such that for all $u\in C_0^\infty ({\mathbb{R}^{n+1}_+})$,
	\begin{equation}\label{GGN-1--n}
	(\int_{\mathbb{R}^{n+1}_+ }t^{-n-1}|u|^{q} dydt)^{\frac{1}{q}} \le C \int_{\mathbb{R}^{n+1}_+ }t^{-n} |\nabla u| dy dt.
	\end{equation}
		\end{proposition}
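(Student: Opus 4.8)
The plan is to obtain both inequalities \eqref{GGN-1} and \eqref{GGN-1--n} by a single H\"older interpolation between the two endpoint estimates already in hand: Lemma \ref{lem2-1} (the $L^1$/Hardy endpoint) and Lemma \ref{lem2-2} (the $L^{(n+1)/n}$/Sobolev endpoint). The crucial observation is that the two lemmas share the identical right-hand side $\int_{\RpN}t^k|\nabla u|\,dydt$, so interpolating between them amounts to an interpolation in the exponent and the weight appearing on the left.

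First I would identify the endpoints. Setting $l=k-1$ in \eqref{GGN-1} forces both the power of $|u|$ and the outer exponent to equal $1$, so \eqref{GGN-1} reduces exactly to Lemma \ref{lem2-1}; setting $l=\frac{n+1}{n}k$ makes the power of $|u|$ equal to $\frac{n+1}{n}$ and the outer exponent equal to $\frac{n}{n+1}$, so \eqref{GGN-1} becomes Lemma \ref{lem2-2}. Writing $p=\frac{n+l+1}{n+k}$ for the power of $|u|$, I would then, for intermediate $l$, pick $\theta\in[0,1]$ and apply the generalized H\"older inequality in the form
\begin{align*}
\int_{\RpN}t^l|u|^p\,dydt\le\Big(\int_{\RpN}t^{k-1}|u|\,dydt\Big)^{1-\theta}\Big(\int_{\RpN}t^{\frac{n+1}{n}k}|u|^{\frac{n+1}{n}}\,dydt\Big)^{\theta},
\end{align*}
which is legitimate precisely when $p=(1-\theta)+\frac{n+1}{n}\theta$ and $l=(k-1)(1-\theta)+\frac{n+1}{n}k\,\theta$. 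Inserting Lemma \ref{lem2-1} and Lemma \ref{lem2-2} into the two factors and collecting the total power of $\int_{\RpN}t^k|\nabla u|\,dydt$, namely $(1-\theta)+\frac{n+1}{n}\theta=p$, gives $\int t^l|u|^p\le C(\int t^k|\nabla u|)^p$; taking the $p$-th root yields \eqref{GGN-1}, with $C=C(n,k)$ an explicit product of powers of $1/|k|$ and $2^{1/n}$.

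The book-keeping step I would treat with care is verifying that these two linear constraints on $(p,l)$ are equivalent to the stated relation $p=\frac{n+l+1}{n+k}$, and that as $\theta$ runs over $[0,1]$ the value $l=(k-1)+\theta\,\frac{n+k}{n}$ sweeps out exactly the interval in \eqref{l-condition}. Here the sign of $\frac{n+k}{n}$ is the whole point: for $k\in(-n,0)\cup(0,\infty)$ it is positive, so $l$ increases from $k-1$ to $\frac{n+1}{n}k$, whereas for $k<-n$ it is negative, reversing the endpoints to $[\frac{n+1}{n}k,k-1]$; this is the only place where the dichotomy of \eqref{l-condition} enters. Throughout one uses $k\ne 0$ (so that Lemma \ref{lem2-1} applies) and the fact that $u\in C_0^\infty(\RpN)$ vanishes near $t=0$, which supplies the boundary condition required by Remark \ref{rem2-1} when $k<0$.

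Part (2) is the degenerate case $k=-n$, where $\frac{n+k}{n}=0$ and the relation $p=\frac{n+l+1}{n+k}$ becomes the indeterminate $0/0$; this is exactly why it must be stated separately. Now both endpoints carry the same weight $t^{-n-1}$, since $\frac{n+1}{n}k=k-1=-n-1$, so the identical H\"older interpolation no longer moves $l$ but instead interpolates purely in the exponent, producing \eqref{GGN-1--n} for every $q\in[1,\frac{n+1}{n}]$; the same computation of the constant applies with $|k|=n$. I expect no genuine obstacle beyond the sign analysis of $\frac{n+k}{n}$ and the check that $p$ stays in $[1,\frac{n+1}{n}]$, which is immediate from $p=1+\theta/n$.
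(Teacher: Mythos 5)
Your interpolation argument is exactly the route the paper takes: Proposition \ref{prop2-1} is stated there as an immediate consequence of Lemma \ref{lem2-1}, Lemma \ref{lem2-2} and H\"older's inequality, and your write-up simply supplies the book-keeping (the pointwise factorization $t^l|u|^p=(t^{k-1}|u|)^{1-\theta}(t^{\frac{n+1}{n}k}|u|^{\frac{n+1}{n}})^{\theta}$, the identification $p=\frac{n+l+1}{n+k}$, the sign analysis of $\frac{n+k}{n}$ that produces the two cases of \eqref{l-condition}, and the degenerate case $k=-n$) that the authors leave implicit. The details check out, so this is a correct proof by essentially the same method.
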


\begin{remark}
According to Maz'ya \cite[(2.1.34)]{Mazya11}, for $k=0$ and $l\in (-1,0]$, inequality \eqref{GGN-1} is also true, even for $u(y,0)\neq 0$. But our approach does not work for this case. 
\end{remark}

As an application of Proposition \ref{prop2-1}, we have the following corollary,  which yields  inequality \eqref{p-main-2} in Theorem \ref{thm1-2} (by choosing $pk-(p-1)l=0 $ in \eqref{GGN-o-1-p} and $p=n+1$ in \eqref{GGN-o-1-n+1}).
 
\begin{corollary}\label{cor2-6-p}
	(1) Let $k\neq -n$ or $0,$ $l$ satisfy \eqref{l-condition}, $p\in[1,n+1],$ or $k=0,\ l\in (-1,0],\ p\in(1,n+1]$.
	 We further assume that
	 \begin{align}\label{l:p=n+1}
	 l\neq \frac {n+1}n k, \ \text{ if }  p=n+1.
	 \end{align} Then
	there is a positive constant $C>0$ such that, for all $u\in C_0^\infty ({\mathbb{R}^{n+1}_+})$,
	\begin{equation}\label{GGN-o-1-p}
	\big(\int_{\mathbb{R}^{n+1}_+}t^l|u|^{\frac{p(n+l+1)}{pk-(p-1)l+n+1-p}}dydt\big)^{\frac{pk-(p-1)l+n+1-p}{n+l+1}} \leq C\int_{\mathbb{R}^{n+1}_+}t^{pk-(p-1)l}|\nabla u|^p dydt.
	\end{equation}
	(2) For $k=-n$, $l=-n-1$, let $p\in [1, n+1]$,
	and  $s$ satisfy
	\begin{equation}\label{s-condition}
	\begin{cases}
	s\in [p, \frac{(n+1)p}{n+1-p}],& \text{ if } p<n+1,\\
	s\ge p, & \text{ if } p=n+1.
	\end{cases}
	\end{equation} Then there is a positive constant $C>0$, such that for all $u\in C_0^\infty ({\mathbb{R}^{n+1}_+})$,
	\begin{align}\label{GGN-o-1-n+1}
	\big(\int_{\mathbb{R}^{n+1}_+}t^{-n-1}|u|^{s}dydt\big)^{\frac{p}{s}} \leq C\int_{\mathbb{R}^{n+1}_+}t^{p-n-1}|\nabla u|^p dydt.
	\end{align}
	
\end{corollary}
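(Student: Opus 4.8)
The plan is to deduce the full range of exponents in \eqref{GGN-o-1-p} and \eqref{GGN-o-1-n+1} from the first-order ($p=1$) inequalities \eqref{GGN-1} and \eqref{GGN-1--n} of Proposition \ref{prop2-1} by the classical device of applying the $p=1$ estimate to a power of $|u|$ and then invoking H\"older's inequality. Concretely, for part (1) I would apply \eqref{GGN-1} to $v=|u|^{\gamma}$, where $\gamma\ge 1$ is a parameter to be fixed, so that $|\nabla v|=\gamma|u|^{\gamma-1}|\nabla u|$ and
$$\Big(\int_{\RpN}t^l|u|^{\frac{\gamma(n+l+1)}{n+k}}dydt\Big)^{\frac{n+k}{n+l+1}}\le C\gamma\int_{\RpN}t^k|u|^{\gamma-1}|\nabla u|\,dydt.$$
(This substitution is legitimate for $u\in C^\infty_0(\RpN)$ after a routine regularization of $|u|^{\gamma}$, e.g.\ replacing it by $(u^2+\varepsilon^2)^{\gamma/2}$ and letting $\varepsilon\to 0$, or by the density of Lipschitz compactly supported functions.) On the right I would split $t^k|u|^{\gamma-1}|\nabla u|$ as $\big(t^{\frac{pk-(p-1)l}{p}}|\nabla u|\big)\cdot\big(t^{k-\frac{pk-(p-1)l}{p}}|u|^{\gamma-1}\big)$ and apply H\"older with exponents $p$ and $p/(p-1)$; a short computation shows the weight in the second factor collapses to exactly $t^l$, \emph{independent of} $\gamma$, so that factor reads $\big(\int t^l|u|^{\frac{(\gamma-1)p}{p-1}}\big)^{\frac{p-1}{p}}$.

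The bootstrap then closes by choosing $\gamma$ so that the power of $|u|$ in this second factor coincides with the one on the left, i.e.\ $\frac{(\gamma-1)p}{p-1}=\frac{\gamma(n+l+1)}{n+k}$; solving gives $\gamma=\frac{p}{p-A(p-1)}$ with $A=\frac{n+l+1}{n+k}$. With this choice the common integral $\int t^l|u|^{m}$, $m=\gamma A$, appears on both sides; assuming $u\not\equiv 0$ it is positive and finite, so it can be divided out, and raising the result to the power $p$ yields \eqref{GGN-o-1-p}. It then remains only to check by elementary algebra that $m=\frac{p(n+l+1)}{pk-(p-1)l+n+1-p}$ and that the outer exponent equals $\frac{pk-(p-1)l+n+1-p}{n+l+1}$; both follow from the single identity $p(n+k)-(p-1)(n+l+1)=pk-(p-1)l+n+1-p$, which also shows these exponents are positive throughout \eqref{l-condition}. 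The sub-case $k=0$, $l\in(-1,0]$ is handled identically, now invoking the Maz'ya bound recorded in the Remark after Proposition \ref{prop2-1} in place of \eqref{GGN-1}.

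Part (2) follows the same route starting from \eqref{GGN-1--n}: set $v=|u|^{\gamma}$ with $\gamma=\frac{p}{p-q(p-1)}$, split $t^{-n}|u|^{\gamma-1}|\nabla u|$ and apply H\"older. One checks that the weight exponent in the second factor collapses to $-n-1$ while the first factor carries $t^{p-n-1}=t^{pk-(p-1)l}$, so the common term $\int t^{-n-1}|u|^{s}$ with $s=\gamma q$ cancels, giving \eqref{GGN-o-1-n+1} with outer exponent $\frac{p}{s}=\frac pq-(p-1)$. As $q$ ranges over $[1,\frac{n+1}{n}]$ one computes $s=p$ at $q=1$ and $s=\frac{(n+1)p}{n+1-p}$ at $q=\frac{n+1}{n}$ (for $p<n+1$), so $s$ sweeps precisely the range \eqref{s-condition}.

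The main obstacle --- indeed essentially the only delicate point --- is ensuring that $\gamma$ is finite and $\ge 1$, which requires $p-A(p-1)>0$ in part (1) and $p-q(p-1)>0$ in part (2). Since $A\in[1,\frac{n+1}{n}]$ throughout the range \eqref{l-condition} and $q\le\frac{n+1}{n}$, one finds $p-A(p-1)\ge\frac{n+1-p}{n}\ge 0$, with equality \emph{exactly} when $p=n+1$ and $l=\frac{n+1}{n}k$ (respectively $q=\frac{n+1}{n}$). This is the degenerate Sobolev-type endpoint at which $m\to\infty$ and the common integral is raised to a vanishing power, so the division in the bootstrap breaks down; this is precisely why the hypothesis \eqref{l:p=n+1} must be imposed when $p=n+1$. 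Away from this endpoint $\gamma\ge p\ge 1$ is finite, the regularization of $|u|^{\gamma}$ is valid, and the argument goes through unchanged.
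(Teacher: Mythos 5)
Your argument is correct and, for the generic case $k\neq -n,0$ and for part (2), it is essentially the paper's own proof: apply the first-order inequality of Proposition \ref{prop2-1} to $|u|^{\gamma}$ with $\gamma=\frac{p(n+k)}{pk-(p-1)l+n+1-p}$ (your $\frac{p}{p-A(p-1)}$ is the same number), split off $t^{(p-1)l/p}|u|^{\gamma-1}$ by H\"older, and absorb; your endpoint analysis showing why \eqref{l:p=n+1} is needed is a slightly more explicit version of the paper's remark that $pk-(p-1)l+n+1-p\neq 0$. The one genuine divergence is the sub-case $k=0$, $l\in(-1,0]$. You invoke Maz'ya's inequality (2.1.34) as the $p=1$ base case and run the identical bootstrap; the paper instead deliberately avoids that citation (its remark notes that its own method cannot prove the $k=0$, $p=1$ inequality) and gives a self-contained two-step argument: it first applies Proposition \ref{prop2-1}(1) with an auxiliary exponent $\tilde{k}\in[\frac{n}{n+1}l,\,l+1]\setminus\{0\}$, which after H\"older produces a weighted norm with a nonzero weight exponent $\frac{p\tilde{k}+(p-1)l}{p-1}$, and then controls that norm by a second application of the already-proved case \eqref{GGN-o-1-p}, choosing $\tilde{k}$ so that \eqref{l-condition} and \eqref{l:p=n+1} hold for the shifted parameters. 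Both routes are valid; yours is shorter but rests on an external result, while the paper's keeps the corollary entirely within its own machinery (this also explains why the statement restricts to $p>1$ when $k=0$ rather than including $p=1$, which would be exactly the Maz'ya inequality itself).
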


\begin{proof}
(1) For the case $k\neq -n$ or $0 $,
 by \eqref{l-condition} and \eqref{l:p=n+1}, it is easy to check that $pk-(p-1)l +n+1-p\ne 0$. Applying Proposition \ref{prop2-1} (1) to $u^{\frac{p(n+k)}{pk-(p-1)l+n+1-p}}$, and by H\"older inequality, we have
\begin{equation*}
\begin{split}
&\big(\int_{\mathbb{R}^{n+1}_+}t^l|u|^{\frac{p(n+l+1)}{pk-(p-1)l+n+1-p}}dydt\big)^{\frac{n+k}{n+l+1}}\\
\leq&C\int_{\mathbb{R}^{n+1}_+}t^k |u|^{\frac{(p-1)(n+l+1)}{pk-(p-1)l+n+1-p}}|\nabla u|dydt\\		\leq&C\big(\int_{\mathbb{R}^{n+1}_+}t^l|u|^{\frac{p(n+l+1)}{pk-(p-1)l+n+1-p}}dydt\big)^{\frac{p-1}{p}}\big(\int_{\mathbb{R}^{n+1}_+}t^{pk-(p-1)l}|\nabla u|^p dydt\big)^{\frac{1}{p}},
\end{split}
\end{equation*}
which gives \eqref{GGN-o-1-p}.

Next, we consider the case $k=0$. Here $ l\in (-1,0],\ p\in(1,n+1]$, and additionally, $l\neq 0$ if $p=n+1$. We need to show that
\begin{align}\label{GGN:k=0}
		\big(\int_{\mathbb{R}^{n+1}_+}t^l|u|^{\frac{p(n+l+1)}{-(p-1)l+n+1-p}}dydt\big)^{\frac{-(p-1)l+n+1-p}{n+l+1}} \leq C\int_{\mathbb{R}^{n+1}_+}t^{-(p-1)l}|\nabla u|^p dydt.
\end{align}
Take $\tilde{k}\in [\frac{n}{n+1}l, l+1]\backslash\{0\}$. Applying Proposition \ref{prop2-1} (1) to $u^{\frac{p(n+\tilde{k})}{-(p-1)l+n+1-p}}$ with $k$ replaced by $\tilde{k}$, and by H\"older inequality, we have
\begin{equation}\label{GGN:k=0-1}
\begin{split}
&\big(\int_{\mathbb{R}^{n+1}_+}t^l|u|^{\frac{p(n+l+1)}{-(p-1)l+n+1-p}}dydt\big)^{\frac{n+\tilde{k}}{n+l+1}}\\
\leq&C\int_{\mathbb{R}^{n+1}_+}t^{\tilde{k}} |u|^{\frac{p(n+\tilde{k})}{-(p-1)l+n+1-p}-1}|\nabla u|dydt\\		\leq&C\big(\int_{\mathbb{R}^{n+1}_+}t^{-(p-1)l}|\nabla u|^p dydt\big)^{\frac{1}{p}}\big(\int_{\mathbb{R}^{n+1}_+}t^{\frac{p\tilde{k}+(p-1)l}{p-1}}|u|^{\frac{p(n+\frac{p\tilde{k}+(p-1)l}{p-1}+1)}{-(p-1)l+n+1-p}}dydt\big)^{\frac{p-1}{p}}.
\end{split}
\end{equation}
For the second integral in the last line of \eqref{GGN:k=0-1}, we apply \eqref{GGN-o-1-p} with $k,\ l$ replaced by $\tilde{k}, \frac{p\tilde{k}+(p-1)l}{p-1}$ respectively, then
\begin{align*}
	\big(\int_{\mathbb{R}^{n+1}_+}t^{\frac{p\tilde{k}+(p-1)l}{p-1}}|u|^{\frac{p(n+\frac{p\tilde{k}+(p-1)l}{p-1}+1)}{-(p-1)l+n+1-p}}dydt\big)^{\frac{-(p-1)l+n+1-p}{n+\frac{p\tilde{k}+(p-1)l}{p-1}+1}}\leq C\int_{\mathbb{R}^{n+1}_+}t^{-(p-1)l}|\nabla u|^p dydt,
\end{align*}
where we further choose $\tilde{k}$ to satisfy $\tilde{k}\in [-(p-1)(l+1), -\frac{n(p-1)}{n+1-p}l] $ if $1<p<n+1$ or $\tilde{k}\geq -(p-1)(l+1)$ if $p=n+1$, in order that conditions \eqref{l-condition} and \eqref{l:p=n+1} hold with $k,\ l$ replaced by $\tilde{k}, \frac{p\tilde{k}+(p-1)l}{p-1}$ respectively.
Taking above inequality back to \eqref{GGN:k=0-1}, we obtain \eqref{GGN:k=0}.

(2) Assume that $q$ satisfies $q\in [1, \frac{n+1}{n}]$ if $p<n+1$ or $q\in [1,\frac{n+1}{n})$ if $p=n+1$. It is easy to check that $p+q-pq\neq 0$. Applying Proposition \ref{prop2-1} (2) to $u^{\frac{p}{p+q-pq}}$,
by H\"older inequality, we have
\begin{equation*}
\begin{split}
&\big(\int_{\mathbb{R}^{n+1}_+}t^{-n-1}|u|^{\frac{pq}{p+q-pq}}dydt\big)^{\frac{1}{q}}\\
\leq&C\int_{\mathbb{R}^{n+1}_+}t^{-n} |u|^{\frac{(p-1)q}{p+q-pq}}|\nabla u|dydt\\		\leq&C\big(\int_{\mathbb{R}^{n+1}_+}t^{-n-1}|u|^{\frac{pq}{p+q-pq}}dydt\big)^{\frac{p-1}{p}}\big(\int_{\mathbb{R}^{n+1}_+}t^{p-n-1}|\nabla u|^p dydt\big)^{\frac{1}{p}}.
\end{split}
\end{equation*}
Then we can get \eqref{GGN-o-1-n+1} by taking $s=\frac{pq}{p+q-pq}$.
\end{proof}

\bigskip
We now give the proof for inequality \eqref{p-main-2} in Theorem \ref{thm1-2}.

	(1) For any $p\in (1,n+1)$, we can choose $k,l$ in \eqref{GGN-o-1-p} such that $pk-(p-1)l=0$, then for $l\in[-p,0]$ and any $ u \in C_0^{\infty}(\mathbb{R}^{n+1}_+)$,
		 \begin{align}\label{pk-(p-1)l=0}
	\big(\int_{\mathbb{R}^{n+1}_+}t^l|u|^{\frac{p(n+l+1)}{n+1-p}}dydt\big)^{\frac{n+1-p}{n+l+1}} \leq C(n,p,l)\int_{\mathbb{R}^{n+1}_+}|\nabla u|^p dydt.
	\end{align}
 Taking $l=-p+\frac{n+1-p}{n+1}\beta$ in \eqref{pk-(p-1)l=0}, we have $\frac{p(n+l+1)}{n+1-p}=p+\frac{p\beta}{n+1}$. Therefore, for $\beta\in[0,\frac{p(n+1)}{n+1-p}]$, 
 \eqref{pk-(p-1)l=0} becomes
$$
(\int_{\mathbb{R}^{n+1}_+} t^{-p+\frac{n+1-p}{n+1}\beta} u^{p+\frac{p\beta}{n+1}} dydt)^{\frac {n+1}{n+\beta+1}} \le C \int_{\mathbb{R}^{n+1}_+}|\nabla u|^p dydt,\ \forall u\in C_0^{\infty}(\mathbb{R}^{n+1}_+).
$$
Then by the approximation argument, we know that the inequality also holds for $u\in \cal{D}^{1,p}_{0,0}(\mathbb{R}^{n+1}_+)$.

(2) The case $p=n+1$ follows from Corollary \ref{cor2-6-p} (2) with $p=n+1,$ and $\beta=s-n-1$.





\bigskip

\subsubsection{Sharp constant and  extremal functions on $\mathbb{R}^{n+1}_+$}



 By the concentration compactness principle (see, for example,  Lions \cite{Lions1985a,Lions1985b}), it is not hard to obtain the existence of extremal functions for $\mu_{n+1,\beta}^*$ if $\beta$ satisfies \eqref{beta-strict}. 
 See a similar argument given in \cite{DSWZ21}. In this section, we prove the rest part of Theorem \ref{thm1-2}. That is, we classify all  positive weak solution to equation \eqref{E-L}, which are also the extremal functions of $\mu_{n+1,\beta}^*$ due to the uniqueness of the solution. In two cases $\beta=1$ and $\beta=2$, the extremal functions can be written out explicitly, and then the corresponding sharp constants can be computed.
 
One of the common approach to obtain the classification result nowadays is to use the method of moving sphere. If the moving spheres can reach the critical positions before they all shrink to one point, then we can obtain essential information on solutions via the key Li and Zhu's calculus lemmas (see \cite{LZ95}, or for example, Lemma 5.2 in  our previous work \cite{DSWZ21}).
Unfortunately, due to the  zero boundary condition in equation \eqref{E-L}, we can not obtain any useful information about the solutions via using the method of moving sphere directly.   To overcome this difficulty, we  consider the new function $\frac{u}{t}$ and  prove that it is  positive on the boundary. Then the method of moving sphere can be applied to $\frac{u}{t}$ and the related equation.

First, we denote
$$C^\infty_0(\overline{\mathbb{R}^{n+1}_+})=\{v|_{\overline{\mathbb{R}^{n+1}_+}}:\ v\in C^\infty_0(\mathbb{R}^{n+1})\},$$
and for $\alpha>0$, 
 $\cal{D}^{1,2}_{\alpha}(\mathbb{R}^{n+1}_+)$ to be the completion of $C^\infty_0(\overline{\mathbb{R}^{n+1}_+})$ under the norm
$$\|v\|_{\cal{D}^{1,2}_{\alpha}(\mathbb{R}^{n+1}_+)}=\big(\int_{\mathbb{R}^{n+1}_+}t^{\alpha} |\nabla v|^2 dydt\big)^{\frac{1}{2}}.$$
It follows from Lemma 7.2 in \cite{DSWZ21} that for $\alpha\geq 1$, $C^\infty_0(\mathbb{R}^{n+1}_+)$ is dense in $\cal{D}^{1,2}_{\alpha}(\mathbb{R}^{n+1}_+)$.
For $u$ and $v=\frac{u}{t},$ we have the following property, for which we will provide the proof later.

\begin{proposition}\label{u-v-1}$u\in \cal{D}^{1,2}_{0,0}(\mathbb{R}^{n+1}_+)$   if and only if  $v=t^{-1}u\in {\cal D}_{2}^{1,2}(\mathbb{R}^{n+1}_+)$. Moreover, for $u\in\cal{D}_{0,0}^{1,2}(\mathbb{R}^{n+1}_+)$ and $\beta$ satisfying  \eqref{beta-0}, it holds 
\begin{align}\label{equal-1}
\int_{\mathbb{R}^{n+1}_+}|\nabla u|^2 dydt=\int_{\mathbb{R}^{n+1}_+}t^2|\nabla v|^2 dydt,
\end{align}
and \begin{align}\label{equal-2}
\int_{\mathbb{R}^{n+1}_+}t^{-2+\frac{n-1}{n+1}\beta} |u|^{2+\frac{2\beta}{n+1}}dydt=\int_{\mathbb{R}^{n+1}_+}t^\beta |v|^{2+\frac{2\beta}{n+1}}dydt.
\end{align}
\end{proposition}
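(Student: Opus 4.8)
The plan is to treat the two displayed identities and the space equivalence in increasing order of difficulty, reducing everything to the single energy identity \eqref{equal-1} on the dense class $C_0^\infty(\mathbb{R}^{n+1}_+)$ together with a density and isometry argument. First I would dispose of \eqref{equal-2}, which is purely algebraic: writing $u=tv$ gives $|u|^{2+\frac{2\beta}{n+1}}=t^{2+\frac{2\beta}{n+1}}|v|^{2+\frac{2\beta}{n+1}}$, and since $-2+\frac{n-1}{n+1}\beta+2+\frac{2\beta}{n+1}=\frac{(n-1)\beta+2\beta}{n+1}=\beta$, the integrand on the left equals $t^\beta|v|^{2+\frac{2\beta}{n+1}}$ pointwise. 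Both integrals are simultaneously finite by the Hardy--Sobolev inequality \eqref{p-main-2}, so \eqref{equal-2} holds with no limiting argument. Thus the heart of the matter is \eqref{equal-1} and the characterization of the two spaces.

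For \eqref{equal-1} I would first establish it on $C_0^\infty(\mathbb{R}^{n+1}_+)$. In the split coordinates $(y,t)$, from $u=tv$ one has $\nabla u=(t\nabla_y v,\, v+t\,\partial_t v)$, hence
\begin{equation*}
|\nabla u|^2 = t^2|\nabla v|^2 + v^2 + 2tv\,\partial_t v = t^2|\nabla v|^2 + v^2 + t\,\partial_t(v^2).
\end{equation*}
Integrating over $\mathbb{R}^{n+1}_+$ and handling the last term by parts in $t$, the boundary contribution $[t v^2]_{t=0}^{t=\infty}$ vanishes because $u$, and hence $v$, is compactly supported inside the open half space; this leaves $\int t\,\partial_t(v^2)\,dydt=-\int v^2\,dydt$, which cancels the $\int v^2$ term and yields \eqref{equal-1} for smooth compactly supported $u$.

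With the identity in hand on $C_0^\infty(\mathbb{R}^{n+1}_+)$, the map $T\colon u\mapsto u/t$ is a linear bijection of $C_0^\infty(\mathbb{R}^{n+1}_+)$ onto itself (inverse $v\mapsto tv$) that is isometric from the $\cal{D}^{1,2}_{0,0}$ norm to the $\cal{D}^{1,2}_2$ norm. Since $\alpha=2\ge 1$, Lemma 7.2 of \cite{DSWZ21} gives that $C_0^\infty(\mathbb{R}^{n+1}_+)$ is dense in $\cal{D}^{1,2}_2(\mathbb{R}^{n+1}_+)$, so $T$ extends uniquely to an isometric isomorphism $\overline{T}\colon \cal{D}^{1,2}_{0,0}(\mathbb{R}^{n+1}_+)\to \cal{D}^{1,2}_2(\mathbb{R}^{n+1}_+)$; since $\overline{T}u=u/t$, this already yields the ``if and only if''. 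To pass \eqref{equal-1} to a general $u\in\cal{D}^{1,2}_{0,0}(\mathbb{R}^{n+1}_+)$, I would pick $u_k\in C_0^\infty(\mathbb{R}^{n+1}_+)$ with $\nabla u_k\to\nabla u$ in $L^2$; by the Hardy inequality \eqref{Hardy-g-p} with $p=2$, $u_k/t$ is Cauchy and converges to $u/t$ in $L^2(\mathbb{R}^{n+1}_+)$, while the isometry makes $\{u_k/t\}$ Cauchy in $\cal{D}^{1,2}_2$. Letting $k\to\infty$ in $\int|\nabla u_k|^2=\int t^2|\nabla(u_k/t)|^2$ gives \eqref{equal-1}.

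The step I expect to be the main obstacle is identifying the abstract limit $\overline{T}u\in\cal{D}^{1,2}_2$ with the pointwise quotient $u/t$: convergence in the homogeneous norm $\cal{D}^{1,2}_2$ controls only the weighted gradient, so one must invoke the function-space characterization of $\cal{D}^{1,2}_{0,0}$ recalled in Section 1 (namely $u\in L^2(\mathbb{R}^{n+1}_+,t^{-2}dydt)$ with $\nabla u\in L^2$) together with the Hardy inequality to upgrade gradient convergence to genuine $L^2$ convergence of $u_k/t$, and then match the two limits via a.e.\ convergence along a subsequence. Care is likewise needed at $t=0$ in verifying that no boundary term is produced for the general, non-compactly-supported $u$, which is exactly what the density of $C_0^\infty(\mathbb{R}^{n+1}_+)$ in $\cal{D}^{1,2}_2$ and the Hardy control together circumvent.
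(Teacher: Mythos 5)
Your proposal is correct and follows essentially the same route as the paper: the pointwise identity for \eqref{equal-2}, the expansion $|\nabla u|^2=t^2|\nabla v|^2+v^2+t\,\partial_t(v^2)$ with integration by parts on $C_0^\infty(\mathbb{R}^{n+1}_+)$, and then density of $C_0^\infty(\mathbb{R}^{n+1}_+)$ in both spaces (via Lemma 7.2 of \cite{DSWZ21}) to extend the isometry, identifying the abstract limit with the pointwise quotient by a.e.\ convergence of a subsequence. The only cosmetic difference is that the paper identifies limits using the weighted Sobolev inequality \eqref{GGN-2-1} where you use the Hardy inequality; both serve the same purpose.
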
 

\medskip

Due to Proposition \ref{u-v-1}, we know that: for $\beta$ satisfying \eqref{beta-0}, 
\begin{align}\label{sharp-1}
    (\int_{\mathbb{R}^{n+1}_+}t^{-2+\frac{n-1}{n+1}\beta} |u|^{2+\frac{2\beta}{n+1}} dydt)^{\frac {n+1}{n+\beta+1}}\leq (\mu_{n+1,\beta}^*)^{-1} \int_{\mathbb{R}^{n+1}_+}|\nabla u|^2dydt,\ \forall u\in \cal{D}^{1,2}_{0,0}(\mathbb{R}^{n+1}_+)
\end{align}
is equivalent to the following sharp inequalities:
\begin{equation}\label{sharp-2}
(\int_{\mathbb{R}^{n+1}_+ }t^\beta |v|^{2+\frac{2\beta}{n+1}}dydt)^{\frac{n+1}{n+\beta+1}} \le (\mu_{n+1,\beta}^*)^{-1}\int_{\mathbb{R}^{n+1}_+ }t^{2}|\nabla v|^2 dy dt,\ \forall v\in \cal{D}_2^{1,2}(\mathbb{R}^{n+1}_+).
\end{equation} 
Further, the extremal functions of inequality \eqref{sharp-2} satisfy  	\begin{equation}\label{weak-1}
\int_{\mathbb{R}^{n+1}_+ }t^{2} \nabla v \cdot \nabla \phi dy dt =\int_{\mathbb{R}^{n+1}_+ }t^\beta v^{1+\frac{2\beta}{n+1}}\phi dydt,\ \forall  \phi \in {\cal D}_{2}^{1,2}(\mathbb{R}^{n+1}_+).
\end{equation}
We define $v\in \cal{D}^{1,2}_{2}(\mathbb{R}^{n+1}_+)$ to be the weak solution to \begin{equation}\label{genequ-1}
	\begin{cases}
	-div(t^2 \nabla v)=t^{\beta}v^{1+\frac{2\beta}{n+1}},\quad \;\;& \text{in}~ \mathbb{R}^{n+1}_+,\\
	\lim_{t \to 0^+} t^2\frac{\partial v}{\partial t}=0,\quad \;\;& \text{on}~ \partial \mathbb{R}^{n+1}_+,
	\end{cases}
	\end{equation} if \eqref{weak-1} holds. Then we have 
\begin{proposition}\label{so-equiv}
Assume that $\beta$ satisfies \eqref{beta-0}. Then $u\in \cal{D}^{1,2}_{0,0}(\mathbb{R}^{n+1}_+)$ is a  weak solution to \eqref{E-L} if and only if  $v=t^{-1}u\in {\cal D}_{2}^{1,2}(\mathbb{R}^{n+1}_+)$  is the weak solution to \eqref{genequ-1}.

\end{proposition}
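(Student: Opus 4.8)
The plan is to establish the equivalence by unwinding the two weak formulations \eqref{weak-2} and \eqref{weak-1} under the substitution $v=t^{-1}u$, and showing that a test function $\phi$ for one equation corresponds bijectively to a test function for the other. First I would observe that Proposition \ref{u-v-1} already provides the bijection between the solution spaces: $u\in\cal{D}^{1,2}_{0,0}(\mathbb{R}^{n+1}_+)$ if and only if $v=t^{-1}u\in\cal{D}^{1,2}_2(\mathbb{R}^{n+1}_+)$. So the content of the proposition is purely about matching the bilinear and nonlinear pairings. Since $\cal{D}^{1,2}_{0,0}$ is the completion of $C^\infty_0(\mathbb{R}^{n+1}_+)$ and (by the cited Lemma 7.2 of \cite{DSWZ21} for $\alpha=2\ge 1$) $C^\infty_0(\mathbb{R}^{n+1}_+)$ is dense in $\cal{D}^{1,2}_2$, it suffices to verify the identity of the two weak formulations for smooth compactly supported test functions and then pass to the limit.

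The key computation is to set $u=tv$ and $\phi=t\psi$, where $\psi$ plays the role of the test function for equation \eqref{genequ-1}, and to rewrite the left-hand side of \eqref{weak-2}. Writing $\nabla u=\nabla(tv)=v\,\nabla t+t\,\nabla v=v\,e_t+t\,\nabla v$ (where $e_t$ is the unit vector in the $t$-direction) and similarly $\nabla\phi=\psi\,e_t+t\,\nabla\psi$, I would expand the inner product $\nabla u\cdot\nabla\phi$ and integrate. The cross terms and the $v\psi$ term should, after an integration by parts in the $t$-variable (using $\partial_t t=1$ and the fact that the boundary terms vanish because $\phi$ is compactly supported in the open half space, or via the boundary condition $\lim_{t\to0^+}t^2\partial_t v=0$), reorganize precisely into $\int t^2\nabla v\cdot\nabla\psi\,dydt$. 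This is the heart of the matter: one must check that the extra first-order terms arising from differentiating the weight $t$ either cancel in pairs or combine into the divergence-form operator $-\dv(t^2\nabla v)$. On the nonlinear side, the matching is direct: substituting $u=tv$ into $\int t^{-2+\frac{n-1}{n+1}\beta}u^{1+\frac{2\beta}{n+1}}\phi\,dydt$ with $\phi=t\psi$ gives exactly $\int t^{\beta}v^{1+\frac{2\beta}{n+1}}\psi\,dydt$, after collecting the powers of $t$ (the exponent check $-2+\frac{n-1}{n+1}\beta+(1+\frac{2\beta}{n+1})+1=\beta$ is the same bookkeeping that underlies \eqref{equal-2} in Proposition \ref{u-v-1}).

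Having matched both sides for $\phi=t\psi$ with $\psi\in C^\infty_0(\mathbb{R}^{n+1}_+)$, I would note that as $\psi$ ranges over $C^\infty_0(\mathbb{R}^{n+1}_+)$ the products $\phi=t\psi$ form a dense family of admissible test functions for \eqref{weak-2} in the appropriate topology, and conversely; combined with the density statement above and the energy identity \eqref{equal-1}, this shows that \eqref{weak-2} holds for all admissible $\phi$ if and only if \eqref{weak-1} holds for all admissible $\psi$. The main obstacle I anticipate is the careful justification of the integration by parts that produces the correct boundary behavior: one must confirm that no spurious boundary contribution at $t=0$ survives, which is where the Neumann-type condition $\lim_{t\to0^+}t^2\partial_t v=0$ and the weighted density from Lemma 7.2 of \cite{DSWZ21} become essential. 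Once the test-function correspondence $\phi\leftrightarrow t\psi$ is seen to be a bijection preserving the relevant function-space membership, the equivalence of the two weak formulations—and hence of the two notions of weak solution—follows immediately.
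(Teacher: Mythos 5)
Your proposal is correct and follows essentially the same route as the paper: the paper's proof rests on the test-function correspondence $\phi\leftrightarrow t^{-1}\phi$ (your $\phi=t\psi$) together with the two identities $\int\nabla u\cdot\nabla\phi=\int t^2\nabla v\cdot\nabla(t^{-1}\phi)$ and the matching of the nonlinear terms, verified first for $\phi\in C^\infty_0(\mathbb{R}^{n+1}_+)$ and then extended by density via Proposition \ref{u-v-1} and Lemma 7.2 of \cite{DSWZ21}. Your expansion-plus-integration-by-parts computation is exactly the content of the paper's ``it is easy to check'' step, and since the test functions are compactly supported in the open half space, the boundary term you worry about vanishes trivially.
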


It follows that the study of inequality \eqref{sharp-1} is reduced to the study of \eqref{sharp-2}, and the classification of nonnegative weak solutions to \eqref{E-L}
is reduced to the classification of nonnegative weak solutions to \eqref{genequ-1}.
It turns out, inequality \eqref{sharp-2} is a special case (for $\alpha=2$) of a more general sharp weighted Sobolev inequality obtained  in our former work \cite{DSWZ21} :
\begin{equation}\label{GGN-2-1}
(\int_{\mathbb{R}^{n+1}_+ }t^\beta |v|^{\frac{2(n+\beta+1)}{n+\alpha-1}}dydt)^{\frac{n+\alpha-1}{n+\beta+1}} \le  S_{n+1, \alpha,\beta}^{-1} \int_{\mathbb{R}^{n+1}_+ }t^{\alpha}|\nabla v|^2 dy dt,\ \forall v\in {\cal D}_{\alpha}^{1,2}(\mathbb{R}^{n+1}_+),
\end{equation}
where $\alpha>0,\beta>-1,\frac{n-1}{n+1}\beta\leq \alpha\leq\beta+2$. It is easy to see that $\mu_{n+1,\beta}^*=S_{n+1,2,\beta}.$
 We have shown that a nontrivial nonnegative weak solution  $v$ to \eqref{genequ-1} must be positive in $\overline{\mathbb{R}^{n+1}_+}$.
Moreover, we have the following theorem.
\begin{theorem}\label{DSWZ-1}(\cite[Theorem 1.6]{DSWZ21})
Assume that $\beta$ satisfy \eqref{beta-strict}.
Let $v\in {\cal D}_{2}^{1,2}(\mathbb{R}^{n+1}_+)$ be a positive weak solution to equation \eqref{genequ-1}. We have, up to the multiple of some constants,
\begin{equation}\label{type-0}
v(y, t)=\big(\frac{1}{|y-y^o|^2+(t+A)^2}\big)^{\frac{n+1}2} \psi\Big(\big|\frac{(y-y^o, t+A)}{|y-y^o|^2+(t+A)^2}-(0, \frac1{2A})\big|\Big),
\end{equation}
where $y^o\in\mathbb{R}^n$, $ A>0$, $\psi(r)>0$  and $\psi\in C^2[0,\frac1{2A})\cap C^0[0,\frac1{2A}]$ satisfies an ODE:
 \begin{equation}\label{ode-0}
	\begin{cases}
		\psi''+(\frac{n}{r}-\frac{4  r}{\frac{1}{4A^2}-r^2})\psi'-\frac{2(n+1)}{\frac{1}{4A^2}-r^2}\psi
		=-KA^{\beta-2}(\frac{1}{4A^2}-r^2)^{\beta-2} \psi^{1+\frac{2\beta}{n+1}}, \; 0<r <\frac{1}{2A},\\
		\psi(\frac{1}{2A})=A^{\frac{n+1}{2}},\ \psi'(0)=0,\   \lim_{r\to (\frac{1}{2A})^{-}}\big(\frac{1}{4A^2}-r^2\big)^2 \psi'(r)=0,
	\end{cases}
\end{equation}
for one constant $K>0$ independent of  $A$. 
 Furthermore, there is only one positive solution to ODE \eqref{ode-0}.

Moreover, in the following two cases, the solutions can be  explicitly written out.

\noindent  1).  For $\beta=1$,  up to the multiple of some constant, $v(y, t)$ must be in the form of
\begin{equation}\label{sol-1-0}
v(y,t)= \Big(\frac{A}{(A+t)^2+|y-y^o|^2}\Big)^{\frac{n+1}{2}},
\end{equation}
where $ A>0$, $y^o \in \mathbb{R}^{n}$,
and
$$
S_{n+1,2,1}=2(n+1)
\big[\pi^\frac n2\frac{\Gamma(\frac{n}2+2)}{\Gamma(n+4)}\big]^{\frac{1}{n+2}}.
$$

\noindent  2).   For $\beta=2$, up to the multiple of some constant, $v(y, t)$ must be in the form of
\begin{equation}\label{sol-2-0}
v(y,t)= \Big(\frac{A}{A^2+t^2+|y-y^o|^2}\Big)^{\frac{n+1}{2}},
\end{equation}
where $A>0$, $y^o \in \mathbb{R}^{n}$, and
$$
S_{n+1,2,2}=(n+1)(n+3)
\big[\frac{\pi^{\frac {n+1}{2}}}{4}\frac{\Gamma(\frac{n+3}2)}{\Gamma({n+3})}
\big]^{\frac{2}{n+3}}.
$$
\end{theorem}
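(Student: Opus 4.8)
The plan is to classify the positive weak solutions of \eqref{genequ-1} by the method of moving spheres, which is now available precisely because the transformed unknown $v=t^{-1}u$ is strictly positive up to the boundary $\{t=0\}$ (as recorded just before the statement). The guiding observation is that $-\dv(t^2\nabla\cdot)$ is the cylindrical reduction of the Laplacian in the effective dimension $N=n+3$, since $t^2=t^{N-(n+1)}$; this fixes the conformal weight $\tfrac{n+1}{2}=\tfrac{N-2}{2}$ appearing in \eqref{type-0} as well as the harmonic decay rate $|x|^{-(n+1)}$. The first genuine step is therefore an asymptotic analysis at infinity. Using $v\in\cal{D}_2^{1,2}(\mathbb{R}^{n+1}_+)$ and standard regularity for the weighted operator (equivalently, for the cylindrically symmetric Laplacian in $\mathbb{R}^{n+3}$, so that $v$ is bounded and continuous up to $\{t=0\}$), together with a Kelvin transform sending infinity to the origin and a removable-singularity argument there, I would show that $\lim_{|x|\to\infty}|x|^{n+1}v(x)$ exists and is positive. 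This exact rate is what the inversions used below preserve, and is what makes the moving-sphere comparisons admissible.

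Next, for each boundary point $x_0=(y_0,0)$ and each $\lambda>0$ I would introduce the Kelvin transform $v_{x_0,\lambda}(x)=\big(\lambda/|x-x_0|\big)^{n+1}\,v\big(x_0+\lambda^2(x-x_0)/|x-x_0|^2\big)$. A key computation, which I would carry out first, is that the weight structure $t^2$ on the left and $t^\beta$ on the right is covariant under inversions centered on $\{t=0\}$, so that $v_{x_0,\lambda}$ again solves \eqref{genequ-1}; this is exactly why the centers are restricted to the boundary. Starting from small $\lambda$ and using the maximum principle for $-\dv(t^2\nabla\cdot)$ up to the degenerate boundary (where the condition $\lim_{t\to0^+}t^2\partial_t v=0$ lets the comparison pass through $\{t=0\}$), I would obtain $v\ge v_{x_0,\lambda}$ outside the sphere $|x-x_0|=\lambda$ and then increase $\lambda$ to a critical value $\bar\lambda(x_0)$. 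The usual dichotomy applies: the exact $|x|^{-(n+1)}$ decay rules out the possibility that a sphere never stops, so a finite critical radius is reached at every boundary point. Feeding this rigidity into the Li and Zhu calculus lemma (\cite{LZ95}; see also Lemma~5.2 of \cite{DSWZ21}) pins $v$ down to the conformal orbit of a radially symmetric profile: unwinding the inversion centered at the point $(y^o,-A)$ below the boundary produces exactly the form \eqref{type-0}, in which $\psi$ is radial about the image center $(0,\tfrac1{2A})$.

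It then remains to analyze the profile. Substituting the ansatz \eqref{type-0} into \eqref{genequ-1} and using the conformal covariance of $-\dv(t^2\nabla\cdot)$ under the inversion, which carries $\mathbb{R}^{n+1}_+$ onto the ball $B\big((0,\tfrac1{2A}),\tfrac1{2A}\big)$ and $\partial\mathbb{R}^{n+1}_+$ onto its bounding sphere, I would reduce the PDE to the single ODE \eqref{ode-0} for $\psi(r)$ on $[0,\tfrac1{2A})$. The side conditions are read off from the geometry: $\psi'(0)=0$ from smoothness at the center, the value $\psi(\tfrac1{2A})=A^{(n+1)/2}$ by matching on the image sphere, and $\lim_{r\to(1/2A)^-}(\tfrac1{4A^2}-r^2)^2\psi'(r)=0$ from the transformed Neumann condition. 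Uniqueness of the positive solution of \eqref{ode-0} I would establish by an ODE argument (a shooting/monotonicity argument in the value $\psi(0)$, or a Pohozaev-type identity), consistent with the uniqueness of the extremal for $\mu^*_{n+1,\beta}$. Finally, for the two special exponents I would verify the explicit candidates directly: for $\beta=1$ the matching $\beta-2=-1$ makes the constant profile $\psi\equiv A^{(n+1)/2}$ solve \eqref{ode-0}, giving \eqref{sol-1-0}, while for $\beta=2$ (the case $\alpha=\beta$, where the lift is the pure Yamabe equation $-\Delta_{\mathbb{R}^{n+3}}V=V^{(n+5)/(n+1)}$ for cylindrical $V$) the profile giving \eqref{sol-2-0} does; in both cases the sharp constants $S_{n+1,2,1}$ and $S_{n+1,2,2}$ follow by evaluating the two integrals in \eqref{GGN-2-1} on the explicit extremals, which reduce to Beta and Gamma function computations.

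I expect the main obstacle to be the moving-sphere step in the degenerate setting. One must run the comparison for $-\dv(t^2\nabla\cdot)$ all the way up to the characteristic boundary $\{t=0\}$, where the weight vanishes, so the standard Hopf lemma and boundary strong maximum principle have to be replaced by their correct degenerate analogues and combined carefully with the flux condition $\lim_{t\to0^+}t^2\partial_t v=0$. Establishing the sharp $|x|^{-(n+1)}$ asymptotics uniformly up to the boundary is the technical heart that legitimizes this step; once it and the covariance of the weights under boundary inversions are in hand, the calculus lemma and the ODE analysis are comparatively routine.
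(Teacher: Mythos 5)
This theorem is not proved in the present paper --- it is quoted verbatim from \cite[Theorem 1.6]{DSWZ21} --- but the strategy outlined in Section 2.1.3 and carried out in that reference is essentially the one you propose: pass to $v=t^{-1}u$, which is positive up to $\{t=0\}$, run the method of moving spheres with Kelvin transforms centered on the boundary for the conformally covariant degenerate operator $-\dv(t^2\nabla\cdot)$ (the cylindrical Laplacian in effective dimension $n+3$), invoke the Li--Zhu calculus lemma, and reduce to the ODE \eqref{ode-0}, verifying the explicit profiles in the two special cases. Your outline matches that route, including the decay rate $|x|^{-(n+1)}$, the covariance of the weights under boundary-centered inversions, and the observation that $\beta=1$ corresponds to a constant profile $\psi$ while $\beta=2$ lifts to the Yamabe equation in $\mathbb{R}^{n+3}$.
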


\medskip

With the help of Proposition \ref{so-equiv}
and Theorem \ref{DSWZ-1}, we hereby complete the proof of Theorem \ref{thm1-2}. So, we are only left to prove Proposition \ref{u-v-1} and Proposition \ref{so-equiv}.

\medskip

\noindent{\bf Proof of Proposition \ref{u-v-1}.}
 Let $u\in C_0^{\infty}(\mathbb{R}^{n+1}_+)$, then $v=\frac{u}{t}$ is also in $C_0^{\infty}(\mathbb{R}^{n+1}_+)$. Direct  calculation yields
\begin{align*}
\int_{\mathbb{R}^{n+1}_+}|\nabla u|^2 dydt&=\int_{\mathbb{R}^{n+1}_+}t^2|\nabla v|^2 dydt+2\int_{\mathbb{R}^{n+1}_+} tv\partial_t vdydt+ \int_{\mathbb{R}^{n+1}_+}v^2 dydt\\
&=\int_{\mathbb{R}^{n+1}_+}t^2|\nabla v|^2 dydt.
\end{align*}
Also, it is easy to check that
$$\int_{\mathbb{R}^{n+1}_+}t^{-2+\frac{n-1}{n+1}\beta} |u|^{2+\frac{2\beta}{n+1}}dydt=\int_{\mathbb{R}^{n+1}_+}t^\beta |v|^{\frac{2(n+\beta+1)}{n+1}}dydt.$$
That is, \eqref{equal-1} and \eqref{equal-2} hold for $u\in C_0^\infty(\mathbb{R}^{n+1}_+)$.

Let $v\in \cal{D}_{2}^{1,2} ({\mathbb{R}^{n+1}_+})$. Since $C^\infty_0(\mathbb{R}^{n+1}_+)$ is dense in $\cal{D}^{1,2}_{2}(\mathbb{R}^{n+1}_+)$, there is $\{v_j\}\subset  C_0^\infty ({\mathbb{R}^{n+1}_+})$, such that 
\begin{align*}
    \int_{\mathbb{R}^{n+1}_+}t^2 |\nabla v_j-\nabla v|^2 dydt\to 0,\ \text{ as }j\to \infty.
\end{align*}
Then as $i,j\to \infty$,
\begin{align*}
    \big(\int_{\mathbb{R}^{n+1}_+ }t^{\beta} |v_i-v_j|^{\frac{2(n+\beta+1)}{n+1}}dydt\big)^{\frac{n+1}{n+\beta+1}}\leq S_{n+1,2,\beta}^{-1}\int_{\mathbb{R}^{n+1}_+}t^2 |\nabla v_i-\nabla v_j|^2 dydt\to 0,\ 
\end{align*}
which implies that 
\begin{align}\label{a.e.-1}
v_j\to v \text{ a.e. in } \mathbb{R}^{n+1}_+.
\end{align} Set 
\begin{align}\label{u_j-1}
u_j=tv_j,\quad u=tv,
\end{align}
then $u_j\in  C_0^\infty ({\mathbb{R}^{n+1}_+})$, and we need to prove $u\in \cal{D}_{0,0}^{1,2} ({\mathbb{R}^{n+1}_+})$. Since
\begin{align*}
    \int_{\mathbb{R}^{n+1}_+} |\nabla u_i-\nabla u_j|^2dydt=\int_{\mathbb{R}^{n+1}_+ }t^2|\nabla v_i-\nabla v_j|^2dy dt\to 0, \text{ as }i,j\to \infty,
\end{align*}
 there is $\tilde{u}\in \cal{D}_{0,0}^{1,2} ({\mathbb{R}^{n+1}_+})$, such that $u_j\to \tilde{u}$ in $ \cal{D}_{0,0}^{1,2} ({\mathbb{R}^{n+1}_+})$.  Similar to \eqref{a.e.-1}, we have 
$$u_j\to \tilde{u} \text{ a.e. in } \mathbb{R}^{n+1}_+.$$
Then by \eqref{a.e.-1} and \eqref{u_j-1}, we conclude that $u=\tilde{u}$ and $u\in \cal{D}_{0,0}^{1,2} ({\mathbb{R}^{n+1}_+})$.
This yields the sufficiency to the conclusion.
Similarly, we can obtain the necessity,
and  show that \eqref{equal-1} and \eqref{equal-2} hold for $u\in \cal{D}_{0,0}^{1,2} ({\mathbb{R}^{n+1}_+})$.
\hfill$\Box$

\bigskip

\noindent{\bf Proof of Proposition \ref{so-equiv}.}
For $u\in  \cal{D}^{1,2}_{0,0}(\mathbb{R}^{n+1}_+) $ and $ v=t^{-1}u\in\cal{D}^{1,2}_{2}(\mathbb{R}^{n+1}_+)$, it is easy to check that for all $\phi \in C_0^\infty(\mathbb{R}^{n+1}_+),$ it holds 
\begin{align*}
    \int_{\mathbb{R}^{n+1}_+} \nabla u\nabla \phi dydt=\int_{\mathbb{R}^{n+1}_+} t^2 \nabla v \nabla (t^{-1}\phi)dydt,
\end{align*} 
and 
\begin{align*}
    \int_{\mathbb{R}^{n+1}_+} t^{-2+\frac{n-1}{n+1}\beta}u^{1+\frac{2\beta}{n+1}}\phi dydt=\int_{\mathbb{R}^{n+1}_+} t^\beta v^{1+\frac{2\beta}{n+1}}(t^{-1}\phi) dydt.
\end{align*}
Using approximating and Proposition \ref{u-v-1}, we know that the above two equalities also hold for all $\phi\in \cal{D}_{0,0}^{1,2}(\mathbb{R}^{n+1}_+)$.
The Proposition is thus proved.
\hfill$\Box$

\begin{remark} 
It is worth pointing out that	inequality \eqref{sharp-1} also holds for  $n=0$. In fact,  it is 
  a  special case of Bliss Lemma \cite{Bliss30}.  It asserts that for $\beta>0,$ it holds  
\begin{align*}
  \big(	\int_{0}^{\infty} t^{-2-\beta}|u|^{2(1+\beta)}dt \big)^{\frac{1}{\beta+1}} \leq C_\beta \int_{0}^{\infty} |u'(t)|^2 dt,\ \forall u\in C_0^\infty(\mathbb{R}_+)
\end{align*}
with the sharp constant
 $$C_\beta=\big(\frac{1}{1+\beta}\big)^{\frac{1}{1+\beta}}\big[\frac{\beta\Gamma(\frac{2+2\beta}{\beta})}{\Gamma(\frac{1}{\beta})\Gamma(\frac{1+2\beta}{\beta})}\big]^{\frac{\beta}{1+\beta}},$$ which is achieved by
 $$u(t)=\frac{Ct}{(1+At^\beta)^{1/\beta}},\ t\geq 0,$$
 for some positive constants $A$ and $C$.

\end{remark}

\bigskip

\subsubsection{Generalization}
Considering $u=t^\frac{\gamma}{2}v$ for $\gamma\in\mathbb{R}$, we have the following more general inequality equivalent to inequality \eqref{GGN-2-1}:
\begin{proposition}\label{prop2-9} Assume that $  \alpha\geq 1,\ \beta>-1,  \ \frac{n-1}{n+1}\beta\le\alpha\le\beta+2$ and   $\gamma\in\mathbb{R}$. 
	 Then for all $u\in C_0^\infty ({\mathbb{R}^{n+1}_+})$,
 \begin{align}\label{GGN-gamma}
&(\int_{\mathbb{R}^{n+1}_+ }t^{\beta-\frac{\gamma(n+\beta+1)}{n+\alpha-1}} |u|^{\frac{2(n+\beta+1)}{n+\alpha-1}}dydt)^{\frac{n+\alpha-1}{n+\beta+1}}\nonumber \\
\le &  S_{n+1, \alpha,\beta}^{-1} \int_{\mathbb{R}^{n+1}_+ }\big(t^{\alpha-\gamma}|\nabla u|^2 -\frac{\gamma(\gamma-2(\alpha-1)) }4\cdot t^{\alpha-\gamma-2} u^2 \big)dy dt,
\end{align}
 where  $S_{n+1, \alpha,\beta}$ is sharp.
 \end{proposition}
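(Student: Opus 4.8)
The plan is to reduce \eqref{GGN-gamma} directly to the already-established sharp inequality \eqref{GGN-2-1} via the substitution $u = t^{\gamma/2} v$ indicated just before the statement. Since \eqref{GGN-gamma} is asserted for $u \in C_0^\infty(\mathbb{R}^{n+1}_+)$, the function $v = t^{-\gamma/2} u$ is again smooth and compactly supported in the open half space, so both lie in $\mathcal{D}_\alpha^{1,2}(\mathbb{R}^{n+1}_+)$ and every manipulation below is legitimate. The hypothesis $\alpha \geq 1$ enters only at the end: by Lemma 7.2 of \cite{DSWZ21} it guarantees that $C_0^\infty(\mathbb{R}^{n+1}_+)$ is dense in $\mathcal{D}_\alpha^{1,2}(\mathbb{R}^{n+1}_+)$, which is what lets the sharp constant computed over this dense class coincide with the one in \eqref{GGN-2-1}.

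First I would transform the left-hand side. With $v = t^{-\gamma/2} u$, the integrand $t^\beta |v|^{2(n+\beta+1)/(n+\alpha-1)}$ becomes $t^{\beta - \gamma(n+\beta+1)/(n+\alpha-1)} |u|^{2(n+\beta+1)/(n+\alpha-1)}$, which is exactly the left-hand side of \eqref{GGN-gamma}; the exponent on $|u|$ is unchanged and the only effect is the shift of the $t$-power by $-\gamma(n+\beta+1)/(n+\alpha-1)$. All the real work sits on the Dirichlet side. Splitting the gradient into its $y$- and $t$-parts and using $\partial_t v = t^{-\gamma/2}\partial_t u - \tfrac{\gamma}{2} t^{-\gamma/2 - 1} u$, I would expand
$$t^\alpha |\nabla v|^2 = t^{\alpha - \gamma}|\nabla u|^2 + \tfrac{\gamma^2}{4} t^{\alpha - \gamma - 2} u^2 - \gamma\, t^{\alpha - \gamma - 1} u\,\partial_t u.$$

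The cross term is the crux. Writing $u\,\partial_t u = \tfrac12 \partial_t(u^2)$ and integrating by parts in $t$ (the boundary contributions at $t = 0$ and $t = +\infty$ vanish because $u$ is compactly supported away from the boundary), it produces $\tfrac{\gamma(\alpha - \gamma - 1)}{2}\, t^{\alpha - \gamma - 2} u^2$. Combining this with the existing $\tfrac{\gamma^2}{4}$-term gives the single coefficient
$$\frac{\gamma^2}{4} + \frac{\gamma(\alpha - \gamma - 1)}{2} = \frac{-\gamma^2 + 2\gamma\alpha - 2\gamma}{4} = -\frac{\gamma(\gamma - 2(\alpha - 1))}{4},$$
so that $\int_{\mathbb{R}^{n+1}_+} t^\alpha |\nabla v|^2 = \int_{\mathbb{R}^{n+1}_+} \big(t^{\alpha - \gamma}|\nabla u|^2 - \tfrac{\gamma(\gamma - 2(\alpha - 1))}{4}\, t^{\alpha - \gamma - 2} u^2\big)$, which is precisely the right-hand side of \eqref{GGN-gamma}. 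Substituting the two resulting identities into \eqref{GGN-2-1} yields \eqref{GGN-gamma}.

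Finally, for the sharpness of $S_{n+1,\alpha,\beta}$: the map $v \mapsto u = t^{\gamma/2} v$ is a bijection between the relevant function spaces under which the two sides of \eqref{GGN-2-1} equal the two sides of \eqref{GGN-gamma} \emph{identically}, not merely up to an inequality. Hence the two Rayleigh-type quotients share the same infimum, and any extremizer (or extremizing sequence) for \eqref{GGN-2-1} transports to one for \eqref{GGN-gamma}, so the constant stays sharp; here the density afforded by $\alpha \ge 1$ is what allows the transport from $\mathcal{D}_\alpha^{1,2}$ down to $C_0^\infty$. The only genuine obstacle is the bookkeeping in the integration by parts — verifying that the boundary terms vanish and that the two $u^2$-coefficients collapse to exactly $-\gamma(\gamma - 2(\alpha-1))/4$; everything else is a direct change of variables.
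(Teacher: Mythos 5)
Your proposal is correct and follows exactly the route the paper intends: the substitution $u=t^{\gamma/2}v$ into the sharp inequality \eqref{GGN-2-1}, with the cross term handled by integrating $t^{\alpha-\gamma-1}\partial_t(u^2)$ by parts to produce the coefficient $-\gamma(\gamma-2(\alpha-1))/4$, and the hypothesis $\alpha\ge 1$ invoked only to transfer sharpness via the density of $C_0^\infty(\mathbb{R}^{n+1}_+)$ in $\mathcal{D}_\alpha^{1,2}(\mathbb{R}^{n+1}_+)$. The paper leaves these computations implicit, and your write-up supplies them correctly.
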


Taking $\gamma=2(\alpha-1)\geq 0$ in Proposition \ref{prop2-9}, we have the following corollary, which apparently is more general than inequality \eqref{p-main-2}.
\begin{corollary}\label{cor2-10}
Assume $ \alpha\geq 1,\beta>-1,$ and $ \frac{n-1}{n+1}\beta\le\ \alpha \le\beta+2.$ Then for all $u\in C_0^\infty(\mathbb{R}^{n+1}_+),$
\begin{equation}\label{GGN-2-2}
(\int_{\mathbb{R}^{n+1}_+ }t^{\frac{n \beta-(\alpha-1)(2n+\beta+2)}{n+\alpha-1}} |u|^{\frac{2(n+\beta+1)}{n+\alpha-1}}dydt)^{\frac{n+\alpha-1}{n+\beta+1}} \le  S_{n+1, \alpha,\beta}^{-1} \int_{\mathbb{R}^{n+1}_+ }t^{2-\alpha}|\nabla u|^2dy dt,
\end{equation}
 where  $S_{n+1, \alpha,\beta}$ is sharp.
\end{corollary}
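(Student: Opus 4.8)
The plan is to obtain Corollary \ref{cor2-10} as the single special case $\gamma=2(\alpha-1)$ of Proposition \ref{prop2-9}, so that no new analysis is needed beyond that already invested in \eqref{GGN-gamma}. Since $\alpha\ge 1$, this choice $\gamma=2(\alpha-1)$ is nonnegative, and the standing hypotheses $\alpha\ge 1$, $\beta>-1$, $\frac{n-1}{n+1}\beta\le\alpha\le\beta+2$ are precisely those assumed in the corollary; hence \eqref{GGN-gamma} applies verbatim with this value of $\gamma$.

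The key observation is that the quadratic coefficient $\frac{\gamma(\gamma-2(\alpha-1))}{4}$ appearing in \eqref{GGN-gamma} vanishes identically once $\gamma=2(\alpha-1)$. Thus the entire $u^2$ term disappears from the right-hand side, leaving only $\int_{\RpN} t^{\alpha-\gamma}|\nabla u|^2\,dydt=\int_{\RpN} t^{2-\alpha}|\nabla u|^2\,dydt$, which is exactly the right-hand side of \eqref{GGN-2-2}. On the left, the weight exponent reduces to $\beta-\frac{\gamma(n+\beta+1)}{n+\alpha-1}=\beta-\frac{2(\alpha-1)(n+\beta+1)}{n+\alpha-1}$, and I would confirm by placing this over the common denominator $n+\alpha-1$ that it equals $\frac{n\beta-(\alpha-1)(2n+\beta+2)}{n+\alpha-1}$, matching \eqref{GGN-2-2}, while the power of $|u|$ and the outer exponent are untouched. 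These purely algebraic checks complete the reduction.

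For the sharpness of $S_{n+1,\alpha,\beta}$, I would note that the constant is inherited directly from Proposition \ref{prop2-9}, and ultimately from the sharp inequality \eqref{GGN-2-1}: the substitution $u=t^{\gamma/2}v$ underlying \eqref{GGN-gamma} is invertible and, after the integration by parts that produces the quadratic term, carries the two sides of \eqref{GGN-2-1} exactly onto the two sides of \eqref{GGN-gamma} with no inequality incurred. Consequently the two variational quotients coincide, their infima agree, and the extremizers of \eqref{GGN-2-2} are $u=t^{\alpha-1}v$ with $v$ extremal for \eqref{GGN-2-1}. As the whole derivation is a one-line substitution, there is no analytic obstacle; the only point requiring (routine) care is verifying that this change of variables is a genuine ratio-preserving bijection, which on $C_0^\infty(\mathbb{R}^{n+1}_+)$ is clear because the integration by parts produces no boundary terms.
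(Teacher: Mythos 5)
Your proposal is correct and follows exactly the paper's route: the corollary is obtained by setting $\gamma=2(\alpha-1)$ in Proposition \ref{prop2-9}, which kills the zeroth-order term and reduces the weights to those in \eqref{GGN-2-2}, with the sharp constant inherited from \eqref{GGN-2-1}. Your algebraic verification of the exponent $\frac{n\beta-(\alpha-1)(2n+\beta+2)}{n+\alpha-1}$ and your remark on sharpness via the ratio-preserving substitution $u=t^{\alpha-1}v$ match what the paper leaves implicit.
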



 We end up this subsection by discussing the relation between the inequality in Corollary \ref{cor2-6-p} and the one in Corollary \ref{cor2-10}. Generally, write $\alpha_1=2-\alpha<1$ and $\beta_1=\frac{n \beta-(\alpha-1)(2n+\beta+2)}{n+\alpha-1}$ in \eqref{GGN-2-2}, then
$$
\frac{2(n+\beta+1)}{n+\alpha-1}=
\frac{2(n+\beta_1+1)}{n+\alpha_1-1} \ \text{ if} \    \alpha\neq n+1 \, (\text{i.e. }\alpha_1\neq 1-n),$$
and \begin{align*}
\begin{cases}
\frac{n-1}{n+1}\beta_1\le\ \alpha_1 \le\beta_1+2, & \ \text{if } 1-n<\alpha_1<1,\\
\beta_1+2\le\ \alpha_1 \le \frac{n-1}{n+1}\beta_1,&\ \text{if } \alpha_1<1-n.
\end{cases}
\end{align*}
Thus letting $k=\frac{\alpha_1+\beta_1}{2}$ and $l=\beta_1$, we get that \eqref{GGN-2-2} coincides with \eqref{GGN-o-1-p} for $p=2$. Besides, the case $\alpha_1\geq 1$ naturally follows from inequality \eqref{GGN-2-1} since $C^\infty_0(\mathbb{R}^{n+1}_+)\subset C^\infty_0(\overline{\mathbb{R}^{n+1}_+})$.
For the remaining case $\alpha_1=1-n$,  we have $\alpha=1+n$ and $\beta_1=-n-1$. Then taking $s=\frac{n+\beta+1}{n}$, we get that \eqref{GGN-2-2} coincides with \eqref{GGN-o-1-n+1} for $p=2$. In conclusion, we have
\begin{corollary}\label{cor-alpha_1}

	\noindent (1) For $\alpha_1$ and $\beta_1$ satisfying
	\begin{align*}
	\begin{cases}
	\frac{n-1}{n+1}\beta_1\le\ \alpha_1 \le\beta_1+2, & \ \text{if } \alpha_1>1-n \text{ and }\alpha_1\neq 1,\\
		\frac{n-1}{n+1}\beta_1\le\ \alpha_1 <\beta_1+2, & \ \text{if } \alpha_1=1,\\
	\beta_1+2\le\ \alpha_1 \le \frac{n-1}{n+1}\beta_1&\ \text{if } \alpha_1<1-n,
	\end{cases}
	\end{align*} it holds
	\begin{equation*}
	(\int_{\mathbb{R}^{n+1}_+ }t^{\beta_1} |u|^{\frac{2(n+\beta_1+1)}{n+\alpha_1-1}}dydt)^{\frac{n+\alpha_1-1}{n+\beta_1+1}} \le  
	C\int_{\mathbb{R}^{n+1}_+ }t^{\alpha_1}|\nabla u|^2 dy dt,\ \forall u\in C^\infty_0(\mathbb{R}^{n+1}_+).
	\end{equation*}
	
		\noindent (2) For $\alpha_1=1-n$, $\beta_1=-n-1$, and  $s$ satisfying
		\begin{equation*}
		\begin{cases}
		s\in [2, \frac{2(n+1)}{n-1}],& \text{ if } n>1,\\
		s\ge 2 & \text{ if } n=1,
		\end{cases}
		\end{equation*}
		it holds
			\begin{align*}
		\big(\int_{\mathbb{R}^{n+1}_+}t^{-n-1}|u|^{s}dydt\big)^{\frac{2}{s}} \leq C\int_{\mathbb{R}^{n+1}_+}t^{1-n}|\nabla u|^2 dydt,\ \forall u\in C^\infty_0(\mathbb{R}^{n+1}_+).
		\end{align*}
\end{corollary}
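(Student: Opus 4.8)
The plan is to observe that the two inequalities asserted here are merely the sharp weighted Sobolev inequality \eqref{GGN-2-1}, together with its consequence \eqref{GGN-2-2}, re-expressed in new weight exponents. The bridge is the substitution recorded just above the statement, $\alpha_1=2-\alpha$ and $\beta_1=\frac{n\beta-(\alpha-1)(2n+\beta+2)}{n+\alpha-1}$, under which the integrability exponent is preserved, $\frac{2(n+\beta+1)}{n+\alpha-1}=\frac{2(n+\beta_1+1)}{n+\alpha_1-1}$ (and hence also the outer power, since then $\frac{n+\alpha-1}{n+\beta+1}=\frac{n+\alpha_1-1}{n+\beta_1+1}$), valid precisely when $\alpha\neq n+1$. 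Accordingly I would split the proof of part (1) into the regimes $\alpha_1\geq 1$, $1-n<\alpha_1<1$, and $\alpha_1<1-n$, and treat the excluded value $\alpha_1=1-n$ as part (2).

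When $\alpha_1\geq 1$ no transformation is needed: I would apply \eqref{GGN-2-1} directly with $\alpha=\alpha_1$, $\beta=\beta_1$. Every $u\in C_0^\infty(\mathbb{R}^{n+1}_+)$ belongs to $C_0^\infty(\overline{\mathbb{R}^{n+1}_+})$, hence to ${\cal D}_{\alpha_1}^{1,2}(\mathbb{R}^{n+1}_+)$, so it is an admissible test function, and the hypotheses $\frac{n-1}{n+1}\beta_1\le\alpha_1\le\beta_1+2$ are exactly the admissibility conditions of \eqref{GGN-2-1}, with $C=S_{n+1,\alpha_1,\beta_1}^{-1}$. The only delicate point is the endpoint $\alpha_1=1$, where the upper bound must be taken strictly: equality $\alpha_1=\beta_1+2$ would force $\beta_1=-1$, which is excluded by the hypothesis $\beta>-1$ of \eqref{GGN-2-1}; for $\alpha_1>1$ the same equality gives $\beta_1=\alpha_1-2>-1$, so there the non-strict bound is harmless.

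For $\alpha_1<1$ with $\alpha_1\neq 1-n$ I would instead invoke \eqref{GGN-2-2} of Corollary \ref{cor2-10}. Setting $\alpha=2-\alpha_1>1$ makes the gradient weight $t^{2-\alpha}=t^{\alpha_1}$, and I would solve $\beta_1=\frac{n\beta-(\alpha-1)(2n+\beta+2)}{n+\alpha-1}$ for $\beta$ in terms of $(\alpha_1,\beta_1)$. After checking that this $\beta$ satisfies $\beta>-1$ and $\frac{n-1}{n+1}\beta\le\alpha\le\beta+2$, inequality \eqref{GGN-2-2} becomes the claimed one via the exponent identity above; equivalently, putting $k=\frac{\alpha_1+\beta_1}{2}$ and $l=\beta_1$ identifies it with \eqref{GGN-o-1-p} at $p=2$. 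This verification is exactly where the two stated parameter chains originate: $\frac{n-1}{n+1}\beta_1\le\alpha_1\le\beta_1+2$ when $1-n<\alpha_1<1$, and the reversed chain $\beta_1+2\le\alpha_1\le\frac{n-1}{n+1}\beta_1$ when $\alpha_1<1-n$, the reversal coming from $\alpha=2-\alpha_1>n+1$ crossing the critical value.

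Finally, the value $\alpha_1=1-n$ of part (2) corresponds to the single excluded case $\alpha=n+1$, where the exponent identity breaks down. Here substituting $\alpha=n+1$ into the formula for $\beta_1$ forces $\beta_1=-n-1$ irrespective of $\beta$, so \eqref{GGN-2-2} collapses to an inequality carrying the weight $t^{-n-1}$ on $|u|$ and $t^{1-n}$ on $|\nabla u|$; writing $s=\frac{n+\beta+1}{n}$ and letting $\beta$ range over its admissible interval yields exactly $s\in[2,\frac{2(n+1)}{n-1}]$ for $n>1$ and $s\geq 2$ for $n=1$, which is \eqref{GGN-o-1-n+1} at $p=2$. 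I expect the main obstacle to be purely bookkeeping rather than analysis: confirming that each two-sided constraint on $(\alpha_1,\beta_1)$ transcribes faithfully into the admissible ranges of \eqref{GGN-2-1} and \eqref{GGN-2-2}, and in particular explaining cleanly why the bound at $\alpha_1=1$ must be strict while the remaining endpoints may be retained.
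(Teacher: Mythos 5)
Your proposal is correct and follows essentially the same route as the paper: the case $\alpha_1\ge 1$ is read off directly from \eqref{GGN-2-1} using $C^\infty_0(\mathbb{R}^{n+1}_+)\subset C^\infty_0(\overline{\mathbb{R}^{n+1}_+})$, while the cases $\alpha_1<1$ (respectively $\alpha_1=1-n$) come from \eqref{GGN-2-2} via the substitution $\alpha_1=2-\alpha$, $\beta_1=\frac{n\beta-(\alpha-1)(2n+\beta+2)}{n+\alpha-1}$ and the resulting identification with \eqref{GGN-o-1-p} (respectively \eqref{GGN-o-1-n+1}) at $p=2$. Your explicit explanation of why the bound must be strict at $\alpha_1=1$ (equality would force $\beta_1=-1$, violating $\beta>-1$ in \eqref{GGN-2-1}) is a point the paper leaves implicit, but the argument is the same.
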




\bigskip

\subsection{Hardy-Sobolev inequality on bounded domains}

In this subsection, we prove Theorem \ref{thm1-1}, that is, the Hardy-Sobolev inequality on  bounded domains with Lipschitz boundary. 

First, for $\Omega$ with $C^1$ boundary,  Theorem \ref{thm1-1} can be easily derived from next lemma which is usually referred to $\varepsilon$-level sharp inequality. And this lemma will be also used to  derive the existence result in the proof of Theorem \ref{extremal-attain} in  Section 3.
\begin{lemma}\label{var-level} Assume that $\Omega$ is  a bounded domain with  $C^1$ boundary and $\beta$ satisfies \eqref{beta-p}. Then for any $ \varepsilon>0$, there is $C(\varepsilon)>0$, such that for all $u\in W_0^{1,p} (\Omega)$,
\begin{equation}\label{maininequality-4}
(\int_{\Omega}\delta^{-p+\frac{n+1-p}{n+1}\beta} |u|^{p+\frac{p\beta}{n+1}}dx)^{\frac{n+1}{n+1+\beta}} \le  (C^*_{n+1, p, \beta}+\varepsilon) \int_{\Omega }|\nabla u|^p dx+C(\varepsilon)\int_{\Omega} |u|^p dx,
\end{equation}
where $C^*_{n+1, p, \beta}$ is the sharp constant in Theorem \ref{thm1-2}.
\end{lemma}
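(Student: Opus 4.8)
The plan is to prove \eqref{maininequality-4} by localizing with a partition of unity and reducing each piece to the sharp half-space inequality \eqref{p-main-2} of Theorem \ref{thm1-2}, following the $\varepsilon$-level scheme of Aubin and Hebey--Vaugon. Throughout I write $q=p+\frac{p\beta}{n+1}$ and $s=-p+\frac{n+1-p}{n+1}\beta$, so that $q\ge p$, $\frac pq=\frac{n+1}{n+1+\beta}$, and the left-hand side of \eqref{maininequality-4} is $\big(\int_\Omega\delta^{s}|u|^{q}\,dx\big)^{p/q}$. By density of $C_0^\infty(\Omega)$ in $W^{1,p}_0(\Omega)$ it suffices to prove the inequality for $u\in C_0^\infty(\Omega)$ with a uniform constant and then pass to the limit (the singular-weight left-hand side survives the limit by Fatou's lemma).

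First I would fix the geometry. Cover $\overline\Omega$ by finitely many balls: interior balls $B'_k$ with $\overline{B'_k}\subset\Omega$, and boundary balls $B_j$ centred at points of $\partial\Omega$, chosen so small that on each $B_j$ the $C^1$ boundary is, after a rotation, the graph $t=\gamma(y)$ of a $C^1$ function with $|\nabla\gamma|<\varepsilon'$. The flattening $\Phi_j:(y,t)\mapsto(y,t-\gamma(y))$ has Jacobian $1$, maps $\Omega\cap B_j$ into $\{t>0\}$, and a direct minimization shows $(1+\varepsilon'^2)^{-1/2}\,h\le\delta\le h$ with $h=t-\gamma(y)$ the new height; hence $\delta/h=1+O(\varepsilon')$ and $D\Phi_j=I+O(\varepsilon')$. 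I would then choose a smooth partition of the $p$-th powers, $\eta_i\ge0$, $\mathrm{supp}\,\eta_i\subset B_i$, $\sum_i\eta_i^{p}\equiv1$ on $\overline\Omega$ (e.g. $\eta_i=\chi_i(\sum_j\chi_j^{p})^{-1/p}$ for a smooth non-negative cover $\chi_i$), which has bounded gradients.

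The heart of the matter is that the single condition $\sum_i\eta_i^{p}=1$ reconstructs both sides without losing the sharp constant. Since $|u|^{p}=\sum_i|\eta_i u|^{p}$ pointwise and $q/p\ge1$, Minkowski's inequality in $L^{q/p}(\delta^{s}dx)$ gives
\[
\Big(\int_\Omega\delta^{s}|u|^{q}\Big)^{p/q}=\Big\|\sum_i|\eta_i u|^{p}\Big\|_{L^{q/p}(\delta^{s}dx)}\le\sum_i\Big(\int_\Omega\delta^{s}|\eta_i u|^{q}\Big)^{p/q}.
\]
On a boundary patch I apply \eqref{p-main-2} to $w:=(\eta_i u)\circ\Phi_j^{-1}\in C_0^\infty(\mathbb{R}^{n+1}_+)$; the distortion factors $\delta/h$ and $D\Phi_j$ contribute $1+O(\varepsilon')$, so each term is $\le(C^*_{n+1,p,\beta}+\varepsilon_1)\int_\Omega|\nabla(\eta_i u)|^{p}$ once $\varepsilon'$ is small. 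On an interior patch $\delta$ is comparable to a positive constant, so either $q$ is subcritical and the term is absorbed into $\varepsilon\int_\Omega|\nabla u|^p+C(\varepsilon)\int_\Omega|u|^p$ by the subcritical Sobolev and Young inequalities, or (only when $p<n+1$ and $\beta$ is maximal) $s=0$ and the term is $\le C^*_{n+1,p,\beta}\int_\Omega|\nabla(\eta_i u)|^p$ by the sharp Sobolev inequality, since there $C^*_{n+1,p,\beta}=S_{n+1,p}$.

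Summing, and using the elementary bound $|a+b|^{p}\le(1+\varepsilon_2)|a|^{p}+C(\varepsilon_2)|b|^{p}$ together with $\sum_i\eta_i^{p}=1$, I get
\[
\sum_i\int_\Omega|\nabla(\eta_i u)|^{p}\le(1+\varepsilon_2)\int_\Omega|\nabla u|^{p}+C(\varepsilon_2)\int_\Omega|u|^{p}\sum_i|\nabla\eta_i|^{p},
\]
and $\sum_i|\nabla\eta_i|^{p}$ is bounded. Combining the three displays and choosing $\varepsilon',\varepsilon_1,\varepsilon_2$ so that $(C^*_{n+1,p,\beta}+\varepsilon_1)(1+\varepsilon_2)\le C^*_{n+1,p,\beta}+\varepsilon$ yields \eqref{maininequality-4} for smooth $u$, and density completes the proof. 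I expect the main obstacle to be exactly this matching: arranging a single partition so that the $L^{q/p}$-Minkowski step and the gradient reconstruction both avoid an overlap factor $\sum_i\eta_i^{p}$ larger than $1$ (a naive use of subadditivity or the triangle inequality loses this), and verifying the multiplicative control $\delta/h=1+O(\varepsilon')$, which is where the $C^1$ rather than merely Lipschitz regularity of $\partial\Omega$ is essential.
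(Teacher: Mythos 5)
Your proposal is correct and follows essentially the same route as the paper: a partition of unity of $p$-th powers, Minkowski's inequality in $L^{q/p}(\delta^{s}dx)$, a $C^1$ boundary flattening with $1+O(\varepsilon)$ distortion feeding into the sharp half-space inequality \eqref{p-main-2}, absorption of the interior piece into $\varepsilon\int|\nabla u|^p+C(\varepsilon)\int|u|^p$, and the standard gradient-reconstruction bound. If anything, you are slightly more careful than the paper at the endpoint $\beta=\frac{p(n+1)}{n+1-p}$ (where the interpolation exponent $\theta$ in the paper's estimate \eqref{Omega_0} degenerates to $1$ and the Young step fails), which you correctly handle via the sharp Sobolev inequality and the identification $C^*_{n+1,p,\beta}=S_{n+1,p}$ there.
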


\begin{proof}
 Let $\{\Omega_i\}_{i=0}^{k}$ be an covering of $\Omega$, which satisfies that $\{\Omega_i\}_{i=1}^{k}$ is an covering of $\partial \Omega$, $\Omega_0\subset \Omega$ and  $dist(\Omega_0,\partial \Omega)>\delta_0>0$. Suppose $\{\eta_i^p\}_{i=0}^k$ is the partition of unity subordinate to $\{\Omega_i\}_{i=0}^k$, that is, $\eta_i$ satisfies
 $$\sum_{i=0}^k \eta_i^p=1 \text{ in } \Omega,\ 0\leq \eta_i\leq 1, \  \eta_i\in C_0^{\infty}(\Omega_i),\ i=0,1,\cdots,k.$$
By \eqref{beta-p}, we have that for $p<n+1$,
$$-p+\frac{n+1-p}{n+1}\beta\in [-p,0] \ \text{ and }\  p+\frac{p\beta}{n+1}\in [2,\frac{p(n+1)}{n+1-p}].$$
Then in $\Omega_0$, since $\delta(x)>\delta_0$, by H\"{o}lder inequality and Sobolev inequality, we have that for all $w\in W_0^{1,p}(\Omega_0)$,
\begin{align}\label{Omega_0}
\big( \int_{\Omega_0}\delta^{-p+\frac{n+1-p}{n+1}\beta} |w|^{p+\frac{p\beta}{n+1}}dx\big)^{\frac{n+1}{n+1+\beta}}\leq &\delta_0^{\frac{(n+1-p)\beta-p(n+1)}{n+1+\beta}} \|w\|_{L^{\frac{p(n+1+\beta)}{n+1}}(\Omega_0)}^p\nonumber\\
\leq & \delta_0^{\frac{(n+1-p)\beta-p(n+1)}{n+1+\beta}}
 \|w\|_{L^{\frac{p(n+1)}{n+1-p}}(\Omega_0)}^{p\theta}\|w\|_{L^p(\Omega_0)}^{p(1-\theta)}\nonumber\\
\leq & C \|\nabla w\|_{L^p(\Omega_0)}^{p\theta} \|w\|_{L^p(\Omega_0)}^{p(1-\theta)} \nonumber\\
	\leq &\varepsilon \|\nabla w\|_{L^p(\Omega_0)}^p + C(\varepsilon) \|w\|_{L^p(\Omega_0)}^p,
\end{align}
where $\theta=\frac{(n+1)\beta}{p(n+1+\beta)}$.
	It is  easy to check that \eqref{Omega_0} also holds for the case $p=n+1$ by H\"{o}lder inequality and Sobolev embedding of $W_{0}^{1,n+1}(\Omega_0)$.

Since $\Omega$ has $C^1$ boundary,  we can assume $\Omega_i (i=1,\cdots, k)$ is small enough, such that there is  a $C^1$ map $\psi_i$ satisfying
\begin{align*}
	\psi_i(\Omega_i\cap \Omega)=U_i\subset\mathbb{R}^{n+1}_+, \ \psi_i(\Omega_i\cap \partial \Omega)\subset \partial \mathbb{R}^{n+1}_+,
\end{align*}
and for $\varepsilon\ll 1$, if we write $(y,t)=\psi_i(x)$, then 
\begin{align}\label{flat}
	\frac{dydt}{(1+\varepsilon)^{n+1}}\leq dx\leq (1+\varepsilon)^{n+1}dydt, \ \frac{t}{1+\varepsilon}\leq \delta(x) \leq (1+\varepsilon)t, \text{ for }x\in \Omega_i.
\end{align}
Therefore, by   inequality \eqref{p-main-2} on the upper half space, for $w\in W_0^{1,p}(\Omega_i\cap\Omega)(i=1,\cdots,k)$, we have that
\begin{align}\label{Omega_i}
(\int_{\Omega_i\cap\Omega}\delta^{-p+\frac{n+1-p}{n+1}\beta} |w|^{p+\frac{p\beta}{n+1}}dx)^{\frac{n+1}{n+1+\beta}} \leq& (1+\varepsilon)^{\theta_0} (\int_{U_i}t^{-p+\frac{n+1-p}{n+1}\beta} |w\circ \psi^{-1}|^{p+\frac{p\beta}{n+1}}dydt)^{\frac{n+1}{n+1+\beta}}\nonumber\\
\leq& C^*_{n+1, p, \beta}(1+\varepsilon)^{\theta_1}\int_{U_i}|\nabla (w\circ\psi_i^{-1})|^p dydt\nonumber\\
\leq & C^*_{n+1, p, \beta}(1+\varepsilon)^{\theta_2}\int_{\Omega_i\cap\Omega}|\nabla w|^p dx,
\end{align}
for some positive numbers $\theta_0,\theta_1,\theta_2$. We  rewrite $C^*_{n+1,p,\beta}(1+\varepsilon)^{\theta_2}$ as $C^*_{n+1,p,\beta}+\varepsilon$. Since $\sum_{i=0}^k |\eta_i|^p=1$ in $\Omega$, applying Minkowski inequality, \eqref{Omega_0} and \eqref{Omega_i}, we have, for $\varepsilon\ll 1$ and any $u\in W_0^{1,p}(\Omega)$,  that
\begin{align}\label{sum}
&(\int_{\Omega}\delta^{-p+\frac{n+1-p}{n+1}\beta} |u|^{p+\frac{p\beta}{n+1}}dx)^{\frac{n+1}{n+1+\beta}} \nonumber\\
=&(\int_{\Omega}\delta^{-p+\frac{n+1-p}{n+1}\beta} |\sum_{i=0}^{k}\eta_i^p u^p|^{\frac{n+1+\beta}{n+1}}dx)^{\frac{n+1}{n+1+\beta}}\nonumber\\
\leq & \sum_{i=0}^{k}(\int_{\Omega_i\cap\Omega}\delta^{-p+\frac{n+1-p}{n+1}\beta} |\eta_i^p u^p|^{\frac{n+1+\beta}{n+1}}dx)^{\frac{n+1}{n+1+\beta}}\nonumber\\
\leq & \sum_{i=0}^{k} \big[(C^*_{n+1,p,\beta}+\varepsilon)\int_{\Omega_i\cap\Omega}|\nabla (\eta_i u)|^p dx \big]+ C(\varepsilon)\int_{\Omega_0}| (\eta_i u)|^p dx\nonumber\\
\leq & (C^*_{n+1,p, \beta}+\varepsilon)\int_{\Omega}\sum_{i=0}^k |\nabla (\eta_i u)|^p dx+ C(\varepsilon)\int_{\Omega}|u|^p dx.
 \end{align}
 Since $$\sum_{i=0}^k |\nabla (\eta_i u)|^p\leq \sum_{i=0}^k\big((1+\varepsilon)\eta_i^p|\nabla u|^p +C(\varepsilon)|\nabla \eta_i|^p u^p\big)\leq (1+\varepsilon)|\nabla u|^p+C(\varepsilon)|u|^p, $$
bringing this  back to \eqref{sum}, we get \eqref{maininequality-4}.
\end{proof}

\medskip

 For $\Omega$ bounded with Lipschitz boundary, the constants in the two formulas of \eqref{flat} may not be as small as $1+\varepsilon$ at the same time. However, we can replace  them by constants depending on $\Omega$ due to the Lipschitz boundary condition, which can also be used to derive Theorem \ref{thm1-1}.

\section{Sharp constants on bounded domains.}

In this section, we consider the sharp constant of Hardy-Sobolev inequality on a bounded domain $\Omega$ with Lipschitz boundary, and give the proofs of Proposition \ref{leq}, Proposition \ref{extremal-attain},   and Proposition \ref{ball-equality}.
 
 \medskip
 
\noindent{\bf Proof of Proposition \ref{leq}.} By Theorem \ref{thm1-1}, we know that $\mu_{n+1,\beta}(\Omega)>0$. Next, we prove $\mu_{n+1,\beta}(\Omega)\leq \mu^*_{n+1,\beta}$.

Since $\Omega$ has Lipschitz boundary, after suitable translation and rotation, we can assume that $0\in \partial \Omega$, $\partial\mathbb{R}^{n+1}_+$ is the tangent hyperplane to $\Omega$ at $0$, and for any $A>0$, there is $h>0$, such that 
$$K_{A,h}:=\{x=(y,t):\ y\in\mathbb{R}^n, 0<t<h, |y|\leq At \}\subset\Omega.$$
Fix $A$ and $h$. By the definition of $\mu^*_{n+1,\beta}$, for any $\varepsilon>0$, there is $u\in C^\infty_0(\mathbb{R}^{n+1}_+)$, such that 
\begin{align*}
    \mu^*_{n+1,\beta}\leq J_{n+1,\beta,\mathbb{R}^{n+1}_+}[u]<\mu^*_{n+1,\beta}+\varepsilon.
\end{align*}
Take $\tilde{u}(y,t)=\lambda^{\frac{n-1}{2}}u(\lambda y, \lambda t),$ where $\lambda>0$ is large enough, such that 
$$\text{supp} \tilde{u}\subset K_{A,h}\subset\Omega,$$
and 
\begin{align}\label{dis-leq}
\delta_\Omega(x)<(1+\varepsilon)t,\quad\forall x=(y,t)\in \text{supp} \tilde{u}.
\end{align}
By scaling invariance, we have that $$J_{n+1,\beta,\mathbb{R}^{n+1}_+}[\tilde{u}]=J_{n+1,\beta,\mathbb{R}^{n+1}_+}[u]<\mu^*_{n+1,\beta}+\varepsilon.$$
Since $-2+\frac{n-1}{n+1}\beta\leq 0$, by \eqref{dis-leq},
we have that 
$$\mu_{n+1,\beta,\Omega}\leq J_{n+1,\beta,\Omega}[\tilde{u}]\leq\frac{J_{n+1,\beta,\mathbb{R}^{n+1}_+}[\tilde{u}]}{(1+\varepsilon)^{\frac{n+1}{n+\beta+1}(-2+\frac{n-1}{n+1}\beta)}}<\frac{\mu^*_{n+1,\beta}+\varepsilon}{(1+\varepsilon)^{\frac{n+1}{n+\beta+1}(-2+\frac{n-1}{n+1}\beta)}}.$$
Letting $\varepsilon\to 0^+$, we get $\mu_{n+1,\beta}(\Omega)\leq \mu^*_{n+1,\beta}$.
\hfill$\Box$
\smallskip

\begin{remark}\label{rmk-3-1}
In fact, for general domain $\Omega$,
if $\partial \Omega$ possesses a tangent plane at least at one point, then $\mu_{n+1,\beta}(\Omega)\leq\mu_{n+1,\beta}^*$. See \cite{Davies95} or \cite{MMP98} for $\beta=0$.
\end{remark}

Next, we give the  sufficient condition for $\mu_{n+1,\beta}(\Omega)$ being achieved  in $W_0^{1,2}(\Omega).$

\noindent{\bf Proof of Proposition \ref{extremal-attain}.}
We apply Lemma \ref{var-level} to show that if $0<\mu_{n+1,\beta}(\Omega)<\mu_{n+1,\beta}^*$, then $\mu_{n+1,\beta}(\Omega)$ is achieved.
	Suppose $\{u_j\}\subset C_0^\infty(\Omega)$ is a normalized nonnegative minimizing sequence of $\mu_{n+1,\beta}(\Omega)$, that is,
	\begin{align*}
\int_{\Omega}\delta^{-2+\frac{n-1}{n+1}\beta} |u_j|^{2+\frac{2\beta}{n+1}}dx=1, \quad \lim_{j\to \infty }\int_{\Omega }|\nabla u_j|^2dx=\mu_{n+1,\beta}(\Omega).
	\end{align*}
	Thus there is $u\in W^{1,2}_0(\Omega)$, such that
	\begin{align*}
	&u_j\rightharpoonup u \text{ weakly in } W^{1,2}_0(\Omega),\\
	&u_j\to u \text{ strongly in } L^2(\Omega), \\
	&u_j \to u \text{ a.e. in }\Omega.
	\end{align*}
	Then we can get that
	\begin{align}\label{B-L-1}
	\mu_{n+1,\beta}(\Omega)=\int_{\Omega }|\nabla u_j|^2dx+o(1)=\int_{\Omega }|\nabla u_j-\nabla u|^2dx+\int_{\Omega }|\nabla u|^2dx+o(1).
	\end{align}
	By  Brezis-Lieb lemma (see \cite{BL83}), it holds
	\begin{align*}
		\lim_{j\to\infty}\big(\int_{\Omega}\delta^{-2+\frac{n-1}{n+1}\beta}|u_j|^{2+\frac{2\beta}{n+1}}dx-\int_{\Omega}\delta^{-2+\frac{n-1}{n+1}\beta} |u_j-u|^{2+\frac{2\beta}{n+1}}dx\big)=\int_{\Omega}\delta^{-2+\frac{n-1}{n+1}\beta}|u|^{2+\frac{2\beta}{n+1}}dx.
	\end{align*}
	So we have
	\begin{align}\label{B-L-2}
		1=&\Big(\int_{\Omega}\delta^{-2+\frac{n-1}{n+1}\beta} |u_j|^{2+\frac{2\beta}{n+1}}dx\Big)^{\frac{n+1}{n+1+\beta}}\nonumber\\
		=&\Big(\int_{\Omega}\delta^{-2+\frac{n-1}{n+1}\beta} |u_j-u|^{2+\frac{2\beta}{n+1}}dx+\int_{\Omega}\delta^{-2+\frac{n-1}{n+1}\beta} |u|^{2+\frac{2\beta}{n+1}}dx+o(1)\Big)^{\frac{n+1}{n+1+\beta}}\nonumber\\
		\leq&\big(\int_{\Omega}\delta^{-2+\frac{n-1}{n+1}\beta} |u_j-u|^{2+\frac{2\beta}{n+1}}dx\big)^{\frac{n+1}{n+1+\beta}}+\big(\int_{\Omega}\delta^{-2+\frac{n-1}{n+1}\beta} |u|^{2+\frac{2\beta}{n+1}}dx\big)^{\frac{n+1}{n+1+\beta}}+o(1).
	\end{align}
Since $0<\mu_{n+1,\beta}(\Omega)<\mu^*_{n+1,\beta}$, we can choose $\varepsilon>0$ small enough, such that $$\mu_{n+1,\beta}^{-1}(\Omega)>(\mu_{n+1,\beta}^{*})^{-1}+\varepsilon.$$
In \eqref{B-L-2}, using Lemma  \ref{var-level} to $u_j-u$ with $p=2$  (noticing that $C^*_{n+1,2,\beta}=(\mu_{n+1,\beta}^{*})^{-1}$),  then using \eqref{B-L-1} and noticing $u_j\to u$ in $L^2(\Omega)$, we have
\begin{align*}
	1\leq& ((\mu_{n+1,\beta}^{*})^{-1}+\varepsilon) \int_{\Omega }|\nabla u_j-\nabla u|^2dx+C(\varepsilon)\int_{\Omega} |u_j-u|^2 dx\\
	&+\mu_{n+1,\beta}^{-1}(\Omega)\int_{\Omega }|\nabla u|^2dx+o(1)\\
	\leq &1-\big(\mu_{n+1,\beta}^{-1}(\Omega)-(\mu_{n+1,\beta}^{*})^{-1}-\varepsilon\big) \int_{\Omega }|\nabla u_j-\nabla u|^2dx+o(1),
\end{align*}
which implies 
$$\int_{\Omega }|\nabla u_j-\nabla u|^2dx\to 0,\ \text{as }j\to \infty,$$
as well as
$$\int_{\Omega}\delta^{-2+\frac{n-1}{n+1}\beta} |u_j-u|^{2+\frac{2\beta}{n+1}}dx\to 0, \ \text{as }j\to \infty.$$
Hence we obtain that $u\in W^{1,2}_0(\Omega)$ is the minimizer of $\mu_{n+1,\beta}(\Omega)$ with $$\int_{\Omega }|\nabla u|^2dx=\mu_{n+1,\beta}(\Omega),\ \text{and }\int_{\Omega}\delta^{-2+\frac{n-1}{n+1}\beta} |u|^{2+\frac{2\beta}{n+1}}dx=1.$$
\hfill$\Box$

\bigskip



Finally, after proper Kelvin transformation, we can obtain a sharp weighted inequality on a ball (see \eqref{inequ-ball} below), which is equivalent to inequality \eqref{sharp-1}. By comparing the weight function in \eqref{inequ-ball} with the distance function in Hardy-Sobolev inequality, the relation between sharp constants of Hardy-Sobolev inequality on balls and on the upper half space, i.e. Proposition \ref{ball-equality}, can be easily obtained.

Set $e_{n+1}=(0,\cdots,0,1)\in \mathbb{R}^{n+1}$. 
Consider the reflection with respect to $\partial B_1(-e_{n+1})$ as
\begin{align}\label{transform-1}
x:=(x', x_{n+1})=-e_{n+1}+\frac{(y,t)+e_{n+1}}{|(y,t)+e_{n+1}|^2} .
\end{align} 
This projects $\mathbb{R}^{n+1}_+$ to $ B_{\frac12}(-\frac{e_{n+1}}{2})$ and 
$\partial \mathbb{R}^{n+1}_+$ to $\partial B_{\frac12}(-\frac{e_{n+1}}{2})$. 
For any $u\in \cal{D}_{0,0}^{1,2}(\mathbb{R}^{n+1}_+)$, set
the Kelvin transformation of $u$ with respect to $\partial B_1(-e_{n+1})$ as
\begin{align}\label{transform-2}
\psi(x)=\frac{1}{|x+e_{n+1}|^{n-1}}u(-e_{n+1}+\frac{x+e_{n+1}}{|x+e_{n+1}|^2}),\quad x\in B_{\frac12}(-\frac{e_{n+1}}{2}).
\end{align}
By simple calculations, we have 
\begin{align}\label{1}
	\int_{\mathbb{R}^{n+1}_+}|\nabla u|^2 dydt=\int_{B_{\frac12}(-\frac{e_{n+1}}{2})}|\nabla \psi|^2 dx
\end{align}
and 
\begin{align}\label{2}
\int_{\mathbb{R}^{n+1}_+}t^{-2+\frac{n-1}{n+1}\beta}|u|^{2+\frac{2\beta}{n+1}} dydt=\int_{B_{\frac12}(-\frac{e_{n+1}}{2})}\big(\frac14-|x+\frac{ e_{n+1}}{2}|^2\big)^{-2+\frac{n-1}{n+1}\beta}| \psi|^{2+\frac{2\beta}{n+1}} dx.
\end{align}
It is easy to check that \eqref{transform-2} gives a  bijection from $\cal{D}_{0,0}^{1,2}(\mathbb{R}^{n+1}_+)$ to $ W_0^{1,2}(B_{\frac12}(-\frac{e_{n+1}}{2}))$. Then by \eqref{1}, \eqref{2} and sharp inequality \eqref{sharp-1}, we get a sharp inequality on $B_{\frac12}(-\frac{e_{n+1}}{2})$: for any $ \psi\in W_0^{1,2}(B_{\frac12}(-\frac{e_{n+1}}{2}))$,
\begin{align}\label{inequ-ball}
    \Big(\int_{B_{\frac12}(-\frac{e_{n+1}}{2})} & \big(\frac14-|x+\frac{ e_{n+1}}{2}|^2\big)^{-2+\frac{n-1}{n+1}\beta}| \psi|^{2+\frac{2\beta}{n+1}} dx\Big)^{\frac{n+1}{n+\beta+1}} \nonumber \\
    & \leq (\mu_{n+1,\beta}^*)^{-1}\int_{B_{\frac12}(-\frac{e_{n+1}}{2})}|\nabla \psi|^2 dx.
\end{align}

We now apply \eqref{inequ-ball} to prove Proposition \ref{ball-equality}.

\medskip{}

\noindent{\bf Proof of Proposition \ref{ball-equality}.} 
In \eqref{inequ-ball}, we have, 
for $x\in B_{\frac12}(-\frac{e_{n+1}}{2}),$
 \begin{align}\label{3}\frac14-|x+\frac{ e_{n+1}}{2}|^2=\big(\frac12+|x+\frac{ e_{n+1}}{2}|\big)\big(\frac12-|x+\frac{ e_{n+1}}{2}|\big)<&\frac12-|x+\frac{ e_{n+1}}{2}| \nonumber \\
 =&\delta_{B_{\frac12}(-\frac{e_{n+1}}{2})}(x).
 \end{align}
 For $\beta$ satisfying \eqref{beta-0}, we have  $-2+\frac{n-1}{n+1}\beta\leq 0$, then combining 
 \eqref{inequ-ball} and \eqref{3}, we have, for any $ \psi\in W_0^{1,2}(B_{\frac12}(-\frac{e_{n+1}}{2}))$,
 \begin{align}\label{HS-ball}
    \Big(\int_{B_{\frac12}(-\frac{e_{n+1}}{2})}\delta_{B_{\frac12}(-\frac{e_{n+1}}{2})}^{-2+\frac{n-1}{n+1}\beta}(x)| \psi|^{2+\frac{2\beta}{n+1}} dx\Big)^{\frac{n+1}{n+\beta+1}}\leq (\mu_{n+1,\beta}^*)^{-1}\int_{B_{\frac12}(-\frac{e_{n+1}}{2})}|\nabla \psi|^2 dx,
\end{align}
which means $$\mu_{n+1,\beta}^*\leq \mu_{n+1,\beta}(B_{\frac12}(-\frac{e_{n+1}}{2})).$$
Then by Proposition \ref{leq}, we obtain the equality:
 $$\mu_{n+1,\beta}(B_{\frac12}(-\frac{e_{n+1}}{2}))=\mu_{n+1,\beta}^*.$$

 Moreover, if $u$ and $\psi$ satisfy \eqref{transform-2}, it holds 
\begin{align}\label{compare}
   J_{n+1,\beta,\mathbb{R}^{n+1}_+}[u]\leq J_{n+1,\beta,B_{\frac12}(-\frac{e_{n+1}}{2})}[\psi].
 \end{align}
 For $\beta=\frac{2(n+1)}{n-1}$ with $n\geq 2$, $\mu_{n+1,\frac{2(n+1)}{n-1}}(B_{\frac12}(-\frac{e_{n+1}}{2}))$ cannot be achieved in  $W_0^{1,2}(B_{\frac12}(-\frac{e_{n+1}}{2}))$ since  $\mu_{n+1,\frac{2(n+1)}{n-1}}^*$ is not achieved in $\cal{D}_{0,0}^{1,2}(\mathbb{R}^{n+1}_+)$. For $\beta$ satisfying \eqref{beta-strict} or $\beta=0$, since $-2+\frac{n-1}{n+1}\beta<0$,   in \eqref{compare}, the strict inequlity holds, then $\mu_{n+1,\beta}(B_{\frac12}(-\frac{e_{n+1}}{2}))$ cannot be achieved in $W_{0}^{1,2}(B_{\frac12}(-\frac{e_{n+1}}{2}))$ no matter   $\mu_{n+1,\beta}^*$ is  achieved
 in $\cal{D}^{1,2}_{0,0}(\mathbb{R}^{n+1}_+)$ or not.   
 
\hfill$\Box$

\medskip

	 Similarly, if we consider exterior domain of a ball, for example, $B^c_{\frac12}(-\frac{e_{n+1}}{2})$, we can get that for $\beta$ satisfying \eqref{beta-strict}, 
		\begin{align}\label{HS-outball}
		0\leq\mu_{n+1,\beta}(B^c_{\frac12}(-\frac{e_{n+1}}{2}))<\mu_{n+1,\beta}^*.
		\end{align}
%
   In fact, set Kelvin transformation of $\psi$ with respect to $\partial B_{\frac12}(-\frac{e_{n+1}}{2})$, that is, for $x\in B_{\frac12}(-\frac{e_{n+1}}{2})$,
   $$z=-\frac12 e_{n+1}+\frac{\frac14 (x+\frac12 e_{n+1})}{|x+\frac12 e_{n+1}|^2},\ \text{ and } \tilde\psi(z)=(\frac{\frac12}{|z+\frac12 e_{n+1}|})^{n-1}\psi(x).$$
   This reflection projects $B_{\frac12}(-\frac{e_{n+1}}{2})$ to $B^c_{\frac12}(-\frac{e_{n+1}}{2})$ and by \eqref{1} and \eqref{2}, it is easy to check that  
   \begin{align*}
      	\int_{\mathbb{R}^{n+1}_+}|\nabla u|^2 dydt=\int_{B^c_{\frac12}(-\frac{e_{n+1}}{2})}|\nabla \tilde\psi|^2 dz
\end{align*}
and 
\begin{align*}
\int_{\mathbb{R}^{n+1}_+}t^{-2+\frac{n-1}{n+1}\beta}|u|^{2+\frac{2\beta}{n+1}} dydt=\int_{B^c_{\frac12}(-\frac{e_{n+1}}{2})}\big(|z+\frac{ e_{n+1}}{2}|^2-\frac14\big)^{-2+\frac{n-1}{n+1}\beta}| \tilde\psi|^{2+\frac{2\beta}{n+1}} dz.
\end{align*}
   Similarly to \eqref{3}, we have that
   $$|z+\frac12 e_{n+1}|^2-\frac14>\delta_{B^c_{\frac12}(-\frac{e_{n+1}}{2})}(z), \ \text{for }\ z\in B^c_{\frac12}(-\frac{e_{n+1}}{2}).$$
   Then 
   we can obtain that for $-2+\frac{n-1}{n+1}\beta<0$,
   \begin{align*}
   J_{n+1,\beta,B^c_{\frac12}(-\frac{e_{n+1}}{2})}[\tilde{\psi}]<J_{n+1,\beta,\mathbb{R}^{n+1}_+}[u].
   \end{align*}
   Since $\mu_{n+1,\beta}^*$ is achieved for $\beta$ satisfying \eqref{beta-strict}, we can obtain \eqref{HS-outball}.

\section{Some examples}

	In this section, we discuss the sharp constant of Hardy-Sobolev inequality for some specific domains, including some  domains with non-Lipschitz boundary point and some unbounded domains.

Before introducing the examples, we first give a generalization of Lemma 12 in \cite{MMP98}, which is Lemma \ref{limit} below and  useful in the following examples. 

\begin{definition}
	Let $\Omega$ be a domain in $\mathbb{R}^{n+1}.$ A sequence of domains $\{\Omega_k\}$ is said to be a normal approximation sequence for $\Omega$, if it satisfies the following two conditions:
	\begin{align*}
	\delta_{\Omega_k}(x)\to \delta_\Omega(x),\quad \forall x\in\Omega,
	\end{align*}
	and for every compact subset $K$ of $\Omega$, there is an integer $j$, such that 
	\begin{align*}
	K\subset\cap_{k=j}^{\infty}\Omega_k.
	\end{align*}
\end{definition}
\begin{lemma}\label{limit}
	Assume that $\Omega$ is a domain in $\mathbb{R}^{n+1}$ and $\{\Omega_k\}$ is a normal approximating sequence for $\Omega$. Then for  $\beta$ satisfying \eqref{beta-0}, it holds
	\begin{align}\label{limit-1}
	\varlimsup_{k\to \infty}\mu_{n+1,\beta}(\Omega_k)\leq \mu_{n+1,\beta}(\Omega).
	\end{align}
\end{lemma}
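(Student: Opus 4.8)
The plan is to establish \eqref{limit-1} by a direct test-function argument, using that $\mu_{n+1,\beta}(\cdot)$ is an infimum over compactly supported smooth functions, as in \eqref{inf1-1}. Fix $\varepsilon>0$ and choose a near-optimal competitor for $\Omega$, that is, some $u\in C_0^\infty(\Omega)\setminus\{0\}$ with
\[
J_{n+1,\beta,\Omega}[u]<\mu_{n+1,\beta}(\Omega)+\varepsilon.
\]
Set $K=\operatorname{supp}u$, a compact subset of $\Omega$. The containment clause in the definition of a normal approximating sequence provides an index $j$ with $K\subset\Omega_k$ for all $k\ge j$; hence $u$ restricts to an admissible function in $C_0^\infty(\Omega_k)$, and by \eqref{inf1-1},
\[
\mu_{n+1,\beta}(\Omega_k)\le J_{n+1,\beta,\Omega_k}[u],\qquad k\ge j.
\]

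The next step is to pass to the limit in the quotient $J_{n+1,\beta,\Omega_k}[u]$, referring to \eqref{J}. Its numerator is stable: since $\operatorname{supp}u=K\subset\Omega_k$ for $k\ge j$, one has $\int_{\Omega_k}|\nabla u|^2\,dx=\int_\Omega|\nabla u|^2\,dx$, independent of $k$. The whole limit therefore concentrates in the weighted denominator integral
\[
D_k:=\int_K \delta_{\Omega_k}^{-2+\frac{n-1}{n+1}\beta}\,|u|^{2+\frac{2\beta}{n+1}}\,dx .
\]
Here I would invoke the pointwise convergence clause $\delta_{\Omega_k}(x)\to\delta_\Omega(x)$ for $x\in\Omega$. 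Because $K$ is compact in $\Omega$, $\delta_\Omega\ge\operatorname{dist}(K,\partial\Omega)>0$ on $K$, so each limiting distance is strictly positive and the power $t\mapsto t^{-2+\frac{n-1}{n+1}\beta}$ is continuous at it. The integrands thus converge pointwise on $K$; being nonnegative, Fatou's lemma gives
\[
\liminf_{k\to\infty}D_k\ \ge\ \int_K \delta_\Omega^{-2+\frac{n-1}{n+1}\beta}\,|u|^{2+\frac{2\beta}{n+1}}\,dx\ =:\ D_\infty,
\]
where $D_\infty\in(0,\infty)$: it is positive since $u\not\equiv 0$, and finite since under \eqref{beta-0} the exponent satisfies $-2+\frac{n-1}{n+1}\beta\le 0$, so the weight is bounded on $K$ where $\delta_\Omega$ stays away from zero.

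Combining the two observations and using that the outer exponent $\frac{n+1}{n+\beta+1}$ is positive, I would conclude
\[
\varlimsup_{k\to\infty}\mu_{n+1,\beta}(\Omega_k)
\le \varlimsup_{k\to\infty}J_{n+1,\beta,\Omega_k}[u]
=\frac{\int_\Omega|\nabla u|^2\,dx}{\big(\liminf_k D_k\big)^{\frac{n+1}{n+\beta+1}}}
\le \frac{\int_\Omega|\nabla u|^2\,dx}{D_\infty^{\frac{n+1}{n+\beta+1}}}
= J_{n+1,\beta,\Omega}[u]
<\mu_{n+1,\beta}(\Omega)+\varepsilon,
\]
and letting $\varepsilon\to 0^+$ yields \eqref{limit-1}. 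The argument is soft, so there is no serious obstacle; the only point requiring care is controlling the weighted integral $D_k$ from below under merely \emph{pointwise} (not uniform) distance convergence, which is precisely what Fatou's lemma delivers. The two clauses of the normal-approximation definition do all the structural work: the containment clause makes $u$ admissible for every $\Omega_k$, while the distance convergence drives the limit of the weight.
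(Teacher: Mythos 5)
Your proof is correct and is essentially the argument the paper intends: the paper only remarks that the proof is ``similar to Lemma 12 in \cite{MMP98}'', and that proof is exactly this test-function scheme --- fix a near-optimal $u\in C_0^\infty(\Omega)\setminus\{0\}$, use the containment clause to make $u$ admissible for $\Omega_k$ with $k$ large, and pass to the limit in the weighted denominator using $\delta_{\Omega_k}\to\delta_\Omega$. Your use of Fatou's lemma is sound; as a minor simplification, since each $\delta_{\Omega_k}$ is $1$-Lipschitz, pointwise convergence on the compact support upgrades to uniform convergence there, so in fact $D_k\to D_\infty$.
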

The proof of Lemma \ref{limit} is similar to Lemma 12 in \cite{MMP98}.
	
	\medskip
	
	\noindent{\bf Example 1.} The Punctured Space: Let $\mathbb{R}^{n+1}_*=\mathbb{R}^{n+1}\backslash \{0\}$, then for $n\geq 2$, the inequality is the classical Hardy-Sobolev inequality \eqref{HS-1} with  $p=2$. We rewrite it with parameter $\beta$: 
	\begin{align}\label{HS-2}
	\big(\int_{\mathbb{R}^{n+1}_*}|x|^{-2+\frac{n-1}{n+1}\beta}|u|^{2+\frac{2\beta}{n+1}}dx\big)^{\frac{n+1}{n+\beta+1}} \leq \mu_{n+1,\beta}^{-1}(\mathbb{R}^{n+1}_*)\int_{\mathbb{R}^{n+1}_*}|\nabla u|^2 dx , \forall u\in \cal{D}^{1,2}_0(\mathbb{R}^{n+1}),
	\end{align}
	with $\beta\in [0,\frac{2(n+1)}{n-1}]$.
	Note that $C^\infty_0(\mathbb{R}^{n+1}\backslash\{0\})$ is dense in $\cal{D}_0^{1,2}(\mathbb{R}^{n+1})$ for $n\geq 2$, while it fails for $n=1$.
	For $\beta\in (0,\frac{2(n+1)}{n-1}]$ with $n\geq 2$, 
	the sharp constant is
	\begin{align}\label{constant-HS}
	\mu_{n+1,\beta}(\mathbb{R}^{n+1}_*)=(n+\beta+1)(n-1)\big(\frac{n-1}{n+1}\big)^{\frac{n+1}{n+\beta+1}}\Big(\frac{2\pi^{\frac{n+1}{2}}\Gamma^2(\frac{n+\beta+1}{\beta})}{\beta\Gamma(\frac{n+1}{2})\Gamma(\frac{2(n+\beta+1)}{\beta}+2)}\Big)^{\frac{\beta}{n+\beta+1}},
	\end{align}
	and the extremal functions are of the form 
	\begin{align}\label{minimizer-HS}
	u(x)=\frac{C}{\big(A+|x|^{\frac{\beta(n-1)}{n+1}}\big)^{\frac{n+1}{\beta}}},\quad x\in\mathbb{R}^{n+1},
	\end{align}
	for $C>0$ and $A>0$. See, for example \cite{Lieb83,CC93}. For $\beta=0$, inequality \eqref{HS-2} is the $({n+1})$-dimensional Hardy inequality with $\mu_{n+1,0}(\mathbb{R}^{n+1}_*)=(\frac{n-1}{2})^2$, but the equality does not hold for any nontrivial function in $\cal{D}^{1,2}_0(\mathbb{R}^{n+1}).$
	
	\smallskip
	
	For $n=1$, inequality \eqref{HS-2} does not hold, that is, for $\beta\geq0$, 
	$$\mu_{2,\beta}(\mathbb{R}^{2}_*)=0.$$ In fact, 
	take $\eta_R\in C_0^\infty(-\infty,+\infty)$ for $R>0$, such that 
	$$\eta_R= 1 \text{ in } (-R, R),\ \eta_R=0 \text{ in } (-\infty, -2R)\cup (2R, +\infty),\ \text{and }|\eta'_R|\leq \frac{A}{R}.$$ 
	Let $u_R(x)=\eta_R(\ln |x|)$, then $u_R\in C_0^\infty(\mathbb{R}^2_*)$.
	It is easy to check that
	 \begin{align*}
	J_{2,\beta,\mathbb{R}^2_*}[u_R]\leq \frac{(4\pi)^{\frac{\beta}{2+\beta}}A^2}{R^{\frac{4+\beta}{2+\beta}}}.
	\end{align*}
	Sending $R\to +\infty$, we get $\mu_{2,\beta}(\mathbb{R}^2_*)=0$ for $\beta\geq 0$.

	\medskip

	\noindent{\bf Example 2.} Exterior Domain: Let $\Omega\subset\mathbb{R}^{n+1}$ be an exterior domain such that $\partial \Omega$ possesses a tangent plane at least at one point, then by Remark \ref{rmk-3-1},  $\mu_{n+1,\beta}(\Omega)\leq\mu_{n+1,\beta}^*$ for $\beta$ satisfying \eqref{beta-0}. In particular, for $\Omega=B_1^c(0)$, we obtain from Section 3 that $\mu_{n+1,\beta}(B_1^c(0))<\mu_{n+1,\beta}^*$ for $\beta$ satisfying \eqref{beta-strict}. As for $\beta=0$, it was given in \cite{MMP98} that $\mu_{n+1,0}(B^c_{\frac12}(-\frac{e_{n+1}}{2}))=0$ for $n=1$ and $\mu_{n+1,0}(B^c_{\frac12}(-\frac{e_{n+1}}{2}))=\mu_{n+1,0}^*=\frac14$ for $n\geq 2$. 
	
	On the other hand, assume $0\notin \overline{\Omega}$ and consider $\Omega_k=\frac{1}{k}\Omega$, then $\{\Omega_k\}$ is a normal approximating sequence for $ \mathbb{R}^{n+1}_*$ as $k\to +\infty$ and  $\mu_{n+1,\beta}(\Omega_k)=\mu_{n+1,\beta}(\Omega)$. Then according to Lemma \ref{limit}, $\mu_{n+1,\beta}(\Omega)\leq \mu_{n+1,\beta}(\mathbb{R}^{n+1}_*)$. In conclusion, 
	\begin{align*}
		\mu_{n+1,\beta}(\Omega)\leq \min\{\mu_{n+1,\beta}(\mathbb{R}^{n+1}_*),  \mu_{n+1,\beta}^*\}.
	\end{align*}
	In particular, for $n=1$ and $\beta\geq 0$, $\mu_{2,\beta}(\mathbb{R}^{2}_*)=0$ implies $\mu_{2,\beta}(\Omega)=0$. Also notice that, by \eqref{constant-HS} and the sharp constants in Theorem \ref{thm1-2}, we have $\mu_{n+1,1}(\mathbb{R}^{n+1}_*)<\mu_{n+1,1}^*$ for $\beta=1$, $n\leq 3$, and  $\mu_{n+1,2}(\mathbb{R}^{n+1}_*)<\mu_{n+1,2}^*$ for $\beta=2$, $n\leq 6$.
	
	\medskip
	
		\noindent{\bf Example 3.} Punctured Domain: Let $\Omega$ be a bounded domain with Lipschitz boundary such that $0\in\Omega$. We consider $\Omega^*=\Omega\backslash\{0\}$. Notice that 
		$$\delta_{\Omega^*}(x)=\min\{|x|,\delta_\Omega(x)\}\leq |x|.$$
		Then for any $\beta$ satisfying \eqref{beta-0}, it holds
		\begin{align*}
		\mu_{n+1,\beta}(\Omega^*)=\inf_{u\in C_0^\infty(\Omega^*)\backslash\{0\}} J_{n+1,\beta,B_1^*}[u]\leq  \inf_{u\in C_0^\infty(\Omega^*)}\frac{\int_{\Omega^*}|\nabla u|^2 dx}{\big(\int_{\Omega^*}|x|^{-2+\frac{n-1}{n+1}\beta}|u|^{2+\frac{2\beta}{n+1}}dx\big)^{\frac{n+1}{n+\beta+1}}}.
		\end{align*}
		For any $\varepsilon>0$, there is $w_\varepsilon\in C_0^\infty(\mathbb{R}^{n+1}_*)$ with supp $w_\varepsilon\subset R_\varepsilon \Omega^*$ for some $R_\varepsilon>0$, such that 
		\begin{align*}
		\frac{\int_{R_\varepsilon\Omega^*}|\nabla w_\varepsilon|^2 dx}{\big(\int_{R_\varepsilon\Omega^*}|x|^{-2+\frac{n-1}{n+1}\beta}|w_\varepsilon|^{2+\frac{2\beta}{n+1}}dx\big)^{\frac{n+1}{n+\beta+1}}}<\mu_{n+1,\beta}(\mathbb{R}^{n+1}_*)+\varepsilon.
		\end{align*}
		Set $u_\varepsilon(x)=w_\varepsilon(R_{\varepsilon}x)$, then $u_\varepsilon\in C_0^\infty(\Omega^*)$ and by scaling invariant property, it is easy to check that 
		\begin{align*}
		\frac{\int_{\Omega^*}|\nabla u_{\varepsilon}|^2 dx}{\big(\int_{\Omega^*}|x|^{-2+\frac{n-1}{n+1}\beta}|u_\varepsilon|^{2+\frac{2\beta}{n+1}}dx\big)^{\frac{n+1}{n+\beta+1}}}<\mu_{n+1,\beta}(\mathbb{R}^{n+1}_*)+\varepsilon,
		\end{align*}
		which implies that $\mu_{n+1,\beta}(\Omega^*)\leq \mu_{n+1,\beta}(\mathbb{R}^{n+1}_*)+\varepsilon$. Sending $\varepsilon\to 0$, we get for any $\beta$ satisfying \eqref{beta-0},
		$$\mu_{n+1,\beta}(\Omega^*)\leq \mu_{n+1,\beta}(\mathbb{R}^{n+1}_*).$$
		In particular, for $n=1$, $\mu_{2,\beta}(\Omega^*)=0$. Besides, for $n\geq 2$, since $W_0^{1,2}(\Omega^*)=W_0^{1,2}(\Omega)$ and $\delta_{\Omega^*}\leq \delta_{\Omega}$, we can obtain that for $0\leq \beta\leq \frac{2(n+1)}{n-1}$, 
		$$\mu_{n+1,\beta}(\Omega^*)\leq\mu_{n+1,\beta}(\Omega).$$
		Then 
	\begin{align}\label{*-1}\mu_{n+1,\beta}(\Omega^*)\leq \min\{\mu_{n+1,\beta}(\mathbb{R}^{n+1}_*),  \mu_{n+1,\beta}(\Omega)\}.
	\end{align}
		
		On the other hand, notice that 
		$\delta_\Omega^*(x)=\min\{\delta_{\Omega}(x),\delta_{\mathbb{R}^{n+1}_*}(x)\}$ in $\Omega^*$.  Set $\theta=\frac{\mu_{n+1,\beta}(\mathbb{R}^{n+1}_*)}{\mu_{n+1,\beta}(\mathbb{R}^{n+1}_*)+\mu_{n+1,\beta}(\Omega)}$.Then for $\beta$ satisfying \eqref{beta-p} and any $u\in C_0^\infty(\Omega^*)$, it holds
	\begin{align*}
	    \int_{\Omega^*}|\nabla u|^2dx=&\theta\int_{\Omega}|\nabla u|^2dx+(1-\theta)\int_{\mathbb{R}^{n+1}_*}|\nabla u|^2dx\\
	    \geq &\theta \mu_{n+1,\beta}(\Omega)\big(\int_{\Omega}\delta_{\Omega}^{-2+\frac{n-1}{n+1}\beta} |u|^{2+\frac{2\beta}{n+1}}dx\big)^{\frac{n+1}{n+\beta+1}}\\
	    &+(1-\theta)\mu_{n+1,\beta}(\mathbb{R}^{n+1}_*)\big(\int_{\mathbb{R}^{n+1}_*}\delta_{\mathbb{R}^{n+1}_*}^{-2+\frac{n-1}{n+1}\beta} |u|^{2+\frac{2\beta}{n+1}}dx\big)^{\frac{n+1}{n+\beta+1}}\\
	    \geq & \frac{\mu_{n+1,\beta}(\Omega)\mu_{n+1,\beta}(\mathbb{R}^{n+1}_*)}{\mu_{n+1,\beta}(\Omega)+\mu_{n+1,\beta}(\mathbb{R}^{n+1}_*)}\big(\int_{\Omega^*}\delta_{\Omega^*}^{-2+\frac{n-1}{n+1}\beta} |u|^{2+\frac{2\beta}{n+1}}dx\big)^{\frac{n+1}{n+\beta+1}}.
	\end{align*}
	It follows that 
	\begin{align}\label{*-2}
	    \mu_{n+1,\beta}(\Omega^*)\geq \frac{\mu_{n+1,\beta}(\Omega)\mu_{n+1,\beta}(\mathbb{R}^{n+1}_*)}{\mu_{n+1,\beta}(\Omega)+\mu_{n+1,\beta}(\mathbb{R}^{n+1}_*)}.
	\end{align}
	Combing \eqref{*-1}  and \eqref{*-2}, we have 
	\begin{align}\label{*-3}
	     \frac{\mu_{n+1,\beta}(\Omega)\mu_{n+1,\beta}(\mathbb{R}^{n+1}_*)}{\mu_{n+1,\beta}(\Omega)+\mu_{n+1,\beta}(\mathbb{R}^{n+1}_*)}\leq \mu_{n+1,\beta}(\Omega^*)\leq \min\{\mu_{n+1,\beta}(\mathbb{R}^{n+1}_*),  \mu_{n+1,\beta}(\Omega)\}.
	\end{align}

		\medskip
		
	\noindent{\bf Example 4.} Annular Domain: Let $\Omega_1, \Omega_2 $ be two bounded domains with Lipschitz boundary in $\mathbb{R}^{n+1}$, such that $\Omega_1\subset\subset\Omega_2$, and set $\Omega_0=\mathbb{R}^{n+1}\backslash \overline{\Omega_1}$.  Consider the domain $\Omega=\Omega_0\cap \Omega_2$.  
	Similarly with \eqref{*-2}, it holds
	\begin{align*}
	    \mu_{n+1,\beta}(\Omega)\geq \frac{\mu_{n+1,\beta}(\Omega_0)\mu_{n+1,\beta}(\Omega_2)}{\mu_{n+1,\beta}(\Omega_0)+\mu_{n+1,\beta}(\Omega_2)}.
	\end{align*}
	
	On the other hand, assume $0\in\Omega_1$. Notice that $\{(\frac1k \Omega_0)\cap\Omega_2\}$ is a normal approximation sequence for $\Omega_2^*$ as $k \to +\infty$, then by Lemma \ref{limit} and Example 3, it holds
	\begin{align*}
	    \limsup_{k\to +\infty} \mu_{n+1,\beta}((\frac1k \Omega_0)\cap\Omega_2)
	    \leq \mu_{n+1,\beta}(\Omega_2^*)\leq \min\{\mu_{n+1,\beta}(\mathbb{R}^{n+1}_*),  \mu_{n+1,\beta}(\Omega)\}.
	    \end{align*}
	    In particular, for $n=1$,  $\lim_{k\to+\infty}\mu_{2,\beta}((\frac1k \Omega_0)\cap\Omega_2)=0$, which implies that we can find some $2$-dimensional annular domains with arbitrarily small sharp constant $\mu_{2,\beta}$. Besides, for $n\geq 2$, since $\mu_{n+1,1}(\mathbb{R}^{n+1}_*)<\mu_{n+1,1}^*$ for $\beta=1$, $n\leq 3$, and  $\mu_{n+1,2}(\mathbb{R}^{n+1}_*)<\mu_{n+1,2}^*$ for $\beta=2$, $n\leq 6$, in such cases, we can find some annular domains whose sharp constant $\mu_{n+1,\beta}$ is strictly less than $\mu_{n+1,\beta}^*$, and hereby the sharp constant can be achieved. As a special case, we consider $B_k(0)\backslash B_1(0) (k>1)$. By Lemma \ref{limit} and \eqref{HS-outball}, it holds for $\beta$ satisfying \eqref{beta-strict},
	   \begin{align*}
	   \limsup_{k \to +\infty}\mu_{n+1,\beta}(B_k(0)\backslash B_1(0))\leq\mu_{n+1,\beta}(B_1^c(0))<\mu_{n+1,\beta}^*.
	   \end{align*}
	   Then for any $n\geq 2$ and $\beta$ satisfying \eqref{beta-strict}, we can find $B_{k_n}(0)\backslash B_1(0)$ for some $k_n>1$,  whose sharp constant $\mu_{n+1,\beta}$ is strictly less than $\mu_{n+1,\beta}^*$.

\vskip 1cm
\noindent {\bf Acknowledgements}\\
Zhu is partially supported by the Simons collaboration grant. Wang is supported by China
Postdoctoral Science Foundation.
 

\small

\end{document}